\newcommand{\C}{\mathbb C}
\newcommand{\R}{\mathbb R}
\newcommand{\Z}{\mathbb Z}
\renewcommand{\P}{\mathbb{P}}
\newcommand{\Xd}{X}
\newcommand{\Li}{\mathcal{L}}
\newcommand{\LL}{\lvert \Li \rvert}
\newcommand{\D}{\mathcal{D}}
\newcommand{\Log}{\operatorname{Log}}
\newcommand{\ttor}{( \C^\ast )^2}
\newcommand{\A}{\mathcal{A}}
\newcommand\cS{{\ensuremath{\mathcal{S}}}\xspace}
\newcommand\cV{{\ensuremath{\mathcal{V}}}\xspace}
\newcommand{\ly}{\mathcal{LL}}
\newcommand{\TG}{ \tilde{\gamma}}
\newcommand{\cp}[1]{\C P^{#1}}
\newcommand{\rp}[1]{\R P^{#1}}
\newtheorem{Proposition}{Proposition}[section]
\newtheorem{Definition}[Proposition]{Definition}
\newtheorem{Lemma}[Proposition]{Lemma}
\newtheorem{Remark}[Proposition]{Remark}
\newtheorem{Theorem}{Theorem}
\newtheorem{Conjecture}{Conjecture}
\newtheorem{Corollary}[Proposition]{Corollary}
\newtheorem{Question}{Question}
\newtheorem*{ack}{Acknowledgement}
\DeclareMathOperator{\itr}{int}
\DeclareMathOperator{\conv}{conv}
\begin{document}

\title{Amoebas of curves and the Lyashko-Looijenga map}
\author{Lionel Lang}

\maketitle

\begin{abstract}
For any curve $\cV$ in a toric surface $\Xd$, we study the critical locus $S(\cV)$ of the moment map $\mu$ from $\cV$ to its compactified amoeba $\mu(\cV)$. We show that for curves $\cV$ in a fixed complete linear system, the critical locus $S(\cV)$ is smooth apart from some real codimension $1$ walls. We then investigate the topological classification of pairs $(\cV,S(\cV))$ when $\cV$ and $S(\cV)$ are smooth. As a main tool, we use the Lyashko-Looijenga mapping ($\ly$) relative to the logarithmic Gauss map $\gamma : \cV \rightarrow \cp{1}$. We prove two statements concerning $\ly$ that are crucial for our study: the map $\ly$ is algebraic; the map $\ly$ extends to nodal curves. It allows us to construct many examples of pairs $(\cV,S(\cV))$ by perturbing nodal curves.
\end{abstract}

\footnote{{\bf Keywords:} Amoeba, algebraic curves, logarithmic Gau\ss{} map, Lyashko-Looijenga.} 
\footnote{{\bf MSC-2010 Classification:} 14H50, 14M25}

\section{Introduction}

Establishing a bridge between algebraic and tropical geometry, amoebas have generated a lot of interest since their introduction in \cite{GKZ}, see $e.g.$ \cite{FPT},  \cite{Mikh}, \cite{EKL}, \cite{KO}, \cite{Kri}, \cite{MikRen}. In this paper, we focus on the case of planar curves and study a topological problem related  to their amoeba. 

For a curve $\cV$ in a complete toric surface $\Xd$, its compactified amoeba is the image $\mu(\cV)$ of $\cV$ itself by the moment map $\mu : \Xd \rightarrow \Delta$ to the moment polygon $\Delta\subset \R^2$ of $\Xd$.  Consider the critical locus $S(\cV) \subset \cV$ of the map $\mu$, $i.e.$ the set of point for which $\mu_{\vert_\cV}$ is not a local diffeomorphism. We study the following.

\begin{Question}\label{qu1}
Given an ample line bundle $\Li$ on $\Xd$, what are the possible topological pairs $(\cV,S(\cV))$ for the curves $\cV$ in the linear system $\LL$?
\end{Question}

In \cite{Mikh}, Mikhalkin gives a description of $S(\cV)$ for smooth curves $\cV$ in terms of the logarithmic Gau\ss{} map $\gamma : \cV \rightarrow \cp{1}$, namely
\begin{eqnarray}\label{eq:s}
 S(\cV) & = & \gamma^{-1}(\rp{1}).
 \label{eq:critloc}
\end{eqnarray}
It constrains the topology of $S(\cV)$ to a union of smooth ovals in $\cV$ intersecting at worst transversally, see Proposition \ref{prop:S(f)}. In the same work, Mikhalkin also noticed that, whenever $\cV$ is defined over $\R$, the real part $\R \cV \subset \R \Xd$ is contained in $S(\cV)$. It directly relates Question \ref{qu1} to \textit{Hilbert's sixteenth problem} on the topology of real algebraic curves in the plane. 

Define the discriminantal set $\cS \subset \LL$ to be the set of curve $C$ such that $S(\cV)$ is not smooth. We prove the following.

\begin{Theorem}
The set $\cS$ is a semi-algebraic set of codimension 1. Moreover, $\cS$ is the closure of its maximal stratum.
\end{Theorem}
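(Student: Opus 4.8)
The plan is to translate the non-smoothness of $S(\cV)$ into a condition on the branch locus of the logarithmic Gau\ss{} map, exploiting Mikhalkin's identity $S(\cV) = \gamma^{-1}(\rp{1})$. First I would set up the local dictionary between singular points of $S(\cV)$ and critical points of $\gamma$. For smooth $\cV$ the map $\gamma : \cV \to \cp{1}$ is holomorphic, hence a local biholomorphism away from its finitely many critical points; there the preimage of the smooth curve $\rp{1}$ is smooth. So $S(\cV)$ can be singular only at a critical point $p$ of $\gamma$ with $\gamma(p) \in \rp{1}$: in a local normal form $\gamma(z) = \gamma(p) + az^k + \cdots$ the set $\{\gamma \in \rp{1}\}$ is, to leading order, a union of $k$ lines through $p$. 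For $k=2$ this is exactly the transverse node permitted by Proposition~\ref{prop:S(f)}; the value $k \ge 3$, several critical values colliding on $\rp{1}$, or a singular $\cV$ each produce a worse singularity. Since the singular curves form a complex hypersurface $\D \subset \LL$ (real codimension $2$), up to $\D$ we obtain the clean criterion: $\cV \in \cS$ iff some critical value of $\gamma$ lies on $\rp{1}$.

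Next I would introduce the incidence variety $\Sigma = \{ (\cV,p) \in \LL \times \Xd : p \in \cV,\ d\gamma_{\cV}(p) = 0 \}$ and its real locus $\Sigma_\R = \{ (\cV,p) \in \Sigma : \gamma_{\cV}(p) \in \rp{1} \}$. The conditions cutting out $\Sigma$ are algebraic, while $\gamma_{\cV}(p) \in \rp{1}$ is the single real equation $\operatorname{Im}\gamma_{\cV}(p) = 0$; thus $\Sigma$ is algebraic and $\Sigma_\R \subset \Sigma$ is semi-algebraic of real codimension $1$. By the previous step $\cS = \pi(\Sigma_\R)$ up to $\D$, where $\pi : \LL \times \Xd \to \LL$ is the proper projection. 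As the image of a semi-algebraic set under a proper algebraic map is semi-algebraic (Tarski--Seidenberg), $\cS$ is semi-algebraic, which settles the first assertion.

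For the codimension and the maximal stratum I would organize the dimension count around the Lyashko--Looijenga map $\ly$, which sends $\cV$ to the branch divisor of $\gamma$ in $\operatorname{Sym}^N(\cp{1})$, $N$ being the generic number of simple branch points from Riemann--Hurwitz. For generic $\cV$ the cover $\gamma$ has $N$ distinct simple critical values, so $\Sigma \to \LL$ is dominant and generically finite and $\dim_\C\Sigma = \dim_\C\LL$. The real wall $H_\R = \{ D : \operatorname{supp}(D)\cap\rp{1}\neq\emptyset \}$ is a real-algebraic hypersurface whose open stratum consists of divisors with a single simple point on $\rp{1}$, and over the smooth locus $\cS = \ly^{-1}(H_\R)$. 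The maximal stratum of $\cS$ is then the $\ly$-preimage of this open stratum --- the curves carrying exactly one transverse node of $S(\cV)$ --- while all further degenerations (higher ramification over $\rp{1}$, colliding critical values on $\rp{1}$, tangencies, singular curves) lie in strictly smaller strata. Density of the maximal stratum reduces to perturbing any degenerate configuration, within $\cS$, so that a single simple critical value remains on $\rp{1}$.

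The crux is the transversality of $\ly$ to $H_\R$: I must show that a single critical value can be moved across $\rp{1}$ by an infinitesimal deformation of $\cV$, so that $\cS$ is genuinely of codimension $1$ (in particular $\cS \neq \LL$), and that the remaining critical values can be freed from $\rp{1}$ independently to land in the open stratum. This is exactly where the algebraicity of $\ly$ and control of its differential are needed: once $\ly$ is algebraic with full-dimensional image, the codimension-$1$ wall $H_\R$ pulls back to a codimension-$1$ set equal to the closure of the preimage of its smooth locus, yielding both that $\cS$ has codimension $1$ and that it is the closure of its maximal stratum.
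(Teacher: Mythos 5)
Your reduction of the problem---$[f]\in\cS$ if and only if some critical value of $\gamma_f$ lies on $\rp{1}$---matches the paper's use of Proposition \ref{prop:S(f)}, and your route to semi-algebraicity (the incidence variety $\Sigma_\R$ plus Tarski--Seidenberg applied to the projection $\LL\times\Xd\to\LL$) is a legitimate alternative to the paper's argument, which instead writes $\cS_\Delta=\ly^{-1}(\P)\cap\D_0$ for the wall $\P\subset\cp{m}$ of divisors meeting $\rp{1}$ and invokes the algebraicity of $\ly$ (Theorem \ref{thm:llalg}). Up to that point your proposal is sound, and arguably more elementary since it does not need Theorem \ref{thm:llalg} at all for the first assertion.

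The gap is in the second half. You say yourself that ``the crux is the transversality of $\ly$ to $H_\R$''---equivalently, that $\ly$ has full-dimensional image and that a critical value can be pushed across $\rp{1}$ while the others are moved off it---but you never prove any of this; you only assert that ``the algebraicity of $\ly$ and control of its differential'' will deliver it. Algebraicity gives no control whatsoever of the differential, and dominance of $\ly$ is established nowhere in your argument (the paper never proves it either, because its proof does not need it). The paper's actual resolution is different and avoids all differential analysis: since $\ly$ is holomorphic where defined and $\P$ is locally the wall $\{\operatorname{Im}(z_k)=0\}$, the composition has pluriharmonic imaginary part, so $\ly^{-1}(\P)\cap\D_0$ is automatically of codimension $1$ and the closure of its maximal stratum \emph{unless} it is empty or all of $\D_0$; the proof is then completed by two concrete existence statements that you are missing: $\cS_\Delta\neq\emptyset$, proved by a tropical construction of smooth real curves with logarithmic inflection points on their real locus (Lemma 2.11 of \cite{L2}), and $\cS_\Delta\neq\D_0$, proved by the existence of simple Harnack curves \cite{Mikh}, for which $S(f)=\R\cV(f)$ is smooth. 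Without these two inputs (or an actual proof of dominance/transversality of $\ly$, which would require real work, e.g.\ an explicit deformation argument), your proposal establishes only that $\cS$ is semi-algebraic; both the codimension-$1$ claim and the closure-of-maximal-stratum claim remain unproven.
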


\noindent
As a consequence, $S(\cV)$ is generically a disjoint union of smooth ovals. A simpler but still challenging variation of Question \ref{qu1} is the following.

\begin{Question}\label{qu2}
Given an ample line bundle $\Li$ on $\Xd$, what are the possible topological pairs $(\cV,S(\cV))$ for smooth curves $\cV \in \LL \setminus \cS$ ? In particular, what are the possible values for the number $b_0(S(\cV))$ of connected components of $S(\cV)$?
\end{Question}

In the context of Question \ref{qu2}, the description \eqref{eq:s} implies the following constraints 
\begin{eqnarray}
	1 \leq b_0 \big( S(C) \big) \leq deg(\gamma).
\label{eq:trivbd0}
\end{eqnarray}
Generically, the latter upper bound does not depend on $\cV$, see Proposition \ref{deglogGauss}. We have the following.

\begin{Theorem}
Let $\Xd=\cp{2}$ and $\Li = \mathcal{O}_\Xd(d)$ for any integer $d\geq1$. Then, $b_0(S(\cV))$ can achieve all the values between $d(d+1)/2$ and $d^2$. In particular, the upper bound of \eqref{eq:trivbd0} is sharp.
\end{Theorem}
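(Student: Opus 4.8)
The plan is to translate $b_0(S(\cV))$ into combinatorial data of the branched cover $\gamma:\cV\to\cp{1}$ and then to realize every prescribed value by degeneration and wall-crossing. First I would record that, by Proposition \ref{deglogGauss}, $\deg(\gamma)=d^2$ for generic $\cV\in\LL$, which already yields the upper bound in \eqref{eq:trivbd0}. Next, fix $\cV\in\LL\setminus\cS$ so that, by Proposition \ref{prop:S(f)} and \eqref{eq:s}, $S(\cV)=\gamma^{-1}(\rp{1})$ is a disjoint union of smooth circles. Write $\cp{1}=D_+\cup\rp{1}\cup D_-$ with $D_\pm$ the two open discs. Since no branch point of $\gamma$ lies on $\rp{1}$, the restriction $\gamma\colon S(\cV)\to\rp{1}$ is an unbranched cover of the circle of degree $d^2$, so $b_0(S(\cV))$ equals the number of cycles of the monodromy permutation $\sigma\in\mathfrak S_{d^2}$ obtained by transporting the fibre once around $\rp{1}$; equivalently, $b_0(S(\cV))$ is the number of boundary components of $\gamma^{-1}(\overline{D_+})$. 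This $\sigma$ is the ordered product of the local monodromies of the branch points contained in $D_+$ (there are $3d(d-1)$ branch points in all, by Riemann--Hurwitz). The three real values that are the images of $\cV\cap\partial\Xd$ are unramified special fibres containing the $3d$ boundary points of $\cV$; these points lie on $S(\cV)$ but, being regular points of $\gamma$, leave $\sigma$ and the cycle count unaffected.

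With this dictionary the maximum is transparent: if all the (simple) branch points of $\gamma$ lie in a single open half-disc, say $D_+$, then $\gamma$ is unbranched over $\overline{D_-}$, so $\gamma^{-1}(\overline{D_-})$ is a disjoint union of $d^2$ discs and $S(\cV)=\partial\,\gamma^{-1}(\overline{D_-})$ consists of exactly $d^2$ circles. To produce an honest smooth curve with this property I would use the two structural results on $\ly$: its algebraicity and its extension to nodal curves. Concretely, I would exhibit a nodal degree-$d$ curve $\cV_0$ whose extended $\ly$-value is an explicit branch divisor supported in $D_+$, then perturb $\cV_0$ inside $\LL$; because $\ly$ is algebraic and continuous up to the nodal boundary, a smooth perturbation keeps the whole branch divisor inside the open chamber $D_+$, giving $b_0(S(\cV))=d^2$.

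To fill in the intermediate values I would argue by wall-crossing. By Theorem~1, $\cS$ is a semi-algebraic wall of codimension $1$ whose maximal stratum consists of curves for which a single simple branch point of $\gamma$ sits on $\rp{1}$. Crossing this stratum transversally moves exactly one branch point from $D_+$ to $D_-$, which multiplies $\sigma$ by the corresponding transposition and hence changes the number of cycles, i.e. $b_0(S(\cV))$, by exactly $\pm1$. I would then realize a second smooth curve with $b_0(S(\cV))=d(d+1)/2$ --- the natural candidate being a maximally positive (Harnack-type) curve obtained again by perturbing a suitable nodal or patchworked model, for which Mikhalkin's description $S(\cV)=\R\cV$ lets one count the components directly --- and join it to the maximal curve above by a generic arc in $\LL$ meeting $\cS$ only along its maximal stratum (and avoiding the discriminant of singular curves, also of codimension $1$). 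Along this arc, $b_0(S(\cV))$ starts at $d(d+1)/2$, ends at $d^2$, and jumps by $\pm1$ at each crossing, so the discrete intermediate value property forces every integer of $[\,d(d+1)/2,\,d^2\,]$ to occur.

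The main obstacle is the realizability at the two ends: the combinatorial model permits any distribution of branch points, but only those branch divisors lying in the image of $\ly$ come from genuine plane curves, and this image is a proper subvariety of $\operatorname{Sym}^{3d(d-1)}\cp{1}$. The whole weight of the argument therefore rests on the two theorems about $\ly$: algebraicity ensures the relevant chambers are cut out by honest equations, while the extension to nodal curves supplies computable base points --- explicit nodal curves whose extended $\ly$-value can be located in the desired chamber --- from which to perturb. A secondary technical point, which I expect to demand care rather than new ideas, is the bookkeeping of the distinguished (yet unramified) fibres of $\gamma$ over the edge directions: the boundary points $\cV\cap\partial\Xd$ sitting on them do not take part in the wall-crossing but do enter the matching of $S(\cV)$ with the Harnack real ovals, and must be tracked consistently to certify the exact values $d^2$ and $d(d+1)/2$ at the endpoints.
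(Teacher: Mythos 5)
Your overall scheme is genuinely different from the paper's: the paper never argues along paths in $\LL$, but instead realizes every value $k$ directly, by perturbing an explicit union of $d$ lines and using the nodal extension of $\ly$ to prove (Proposition \ref{prop:critnod}) that a node $p$ with $\sigma_f(p)=-1$ (resp. $\sigma_f(p)=+1$) contributes one (resp. two) ovals to $S(h)$, whence $b_0(S(h))=d+d(d-1)-n_-$. Your monodromy dictionary and the mechanism of your wall-crossing (a single simple branch point crossing $\rp{1}$ multiplies the monodromy of $S(\cV)\rightarrow\rp{1}$ by a transposition, so the cycle count jumps by exactly $\pm 1$) are correct in principle. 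The genuine gaps are at the two anchors of your argument, which is precisely where the content of the theorem lies. For the endpoint $b_0=d^2$ you defer to ``an explicit nodal curve whose extended $\ly$-value lies in $D_+$'' but never produce one; producing it is exactly the paper's work with the four families of lines $L_1,\dots,L_4$, the sign $\sigma_f(p)$ computed from $\gamma_{a,b}(p)=[ad-ab:bc-ab]$, and the verification of the sign matrix. For the other endpoint your candidate is simply wrong: for a simple Harnack curve one has $S(\cV)=\R\cV$, which has $g+1=(d-1)(d-2)/2+1$ components, not $d(d+1)/2$ (for $d=3$ this is $2$ versus $6$). This error would not break your scheme --- an endpoint below $d(d+1)/2$ would make it prove even more --- but it shows the endpoint count was never actually performed, and with no verified endpoint at either extreme the intermediate-value argument has nothing to interpolate between.

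The wall-crossing step itself also rests on an unproved genericity hypothesis. Your discrete intermediate value argument requires that every crossing of $\cS_\Delta$ along a generic arc moves exactly one \emph{simple} branch point across $\rp{1}$; at a crossing where two branch points are simultaneously real, or where a multiple branch point is real, $b_0$ can jump by $0$ or $\pm 2$ and the intermediate value property fails. Theorem \ref{thm:sa} only asserts that $\cS_\Delta$ is semi-algebraic of codimension $1$ and equals the closure of its maximal stratum; it does \emph{not} identify that maximal stratum with the locus of curves having a single simple real branch point. To justify your step you would need the loci $\{[f] : \ly(f) \text{ has } \geq 2 \text{ roots on } \rp{1}\}$ and $\{[f] : \ly(f) \text{ has a multiple root on } \rp{1}\}$ to have real codimension $\geq 2$ in $\D_0$, i.e. a transversality statement for $\ly$ with respect to the stratification of $\cp{m}$, which neither you nor the paper establishes --- and which the paper's purely local perturbation argument never needs. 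A telling consistency check: if your argument were complete it would produce all values in $[g+1,d^2]$, strictly more than the theorem, whereas the paper explicitly flags the range between $g_\Delta+1$ and $d(d+1)/2$ as requiring ``slightly different techniques''; the missing steps above are exactly where that extra difficulty hides.
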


In the above theorem, we are far from the trivial lower bound given in \eqref{eq:trivbd0}. In Section \ref{sec:top}, we motivate Conjecture \ref{conj} given in Section \ref{sec:mr} stating that the sharp lower bound for $b_0(S(\cV))$ is given by $g+1$ where $g$ is the genus of a generic curve in $\LL$. Moreover, this bound is always realized by simple Harnack curves, see \cite{Mikh}. 

The logarithmic Gau\ss{} map obviously plays a central role in Questions \ref{qu1} and \ref{qu2}. In particular, the description \eqref{eq:s} indicates that the topology of $S(\cV)$ depends strongly on the relative position of the critical values of $\gamma$ on $\cp{1}$ with respect to $\rp{1}$. 

The \textit{ad hoc} tool here is the Lyashko-Looijenga mapping. It is a fundamental tool in singularity theory and the study of Hurwitz spaces (see $e.g.$ \cite{Vas}, \cite{ELSV} and references therein). In the present context, it appears as a map
\[ \ly : \LL \dashrightarrow \cp{m} \]
defined on an open Zariski subset of the source: it maps a curve $\cV$ to the unordered set of critical values of  $\gamma$. Equivalently, it maps $\cV$ to the branching divisor of $\gamma$ encoded as a point in $\cp{m}$, provided that $m$ is the degree of the latter divisor.

The first result we prove concerning this map is the key for Theorem \ref{thm:sa}.

\begin{Theorem}
The map $\ly : \LL  \dashrightarrow \cp{m}$ is algebraic.
\end{Theorem}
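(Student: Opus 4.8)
The plan is to realize $\ly$ as the composition of three algebraic operations: forming the relative logarithmic Gauss map over the universal family of curves in $\LL$, extracting its relative ramification scheme, and pushing forward to record the branch divisor as a point of $\cp{m} = \operatorname{Sym}^m \cp{1}$. First I would fix homogeneous coordinates on $\LL = \P(H^0(\Xd,\Li))$, so that a point corresponds to a defining polynomial $f$ (in torus coordinates $(z,w) \in \ttor$) up to scalar, and consider the universal curve $\mathcal{C} \to \LL$. On $\mathcal{C}$ the logarithmic Gauss map is given in torus coordinates by the explicit formula $\gamma(z,w) = [\,z f_z : w f_w\,]$, which is manifestly a rational map $\mathcal{C} \dashrightarrow \cp{1}$ whose defining data are polynomial in the coefficients of $f$. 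Since the ramification scheme $R \subset \mathcal{C}$ is finite over $\LL$ and $\gamma$ is algebraic, the pushforward $\gamma_\ast R$ is a relative effective divisor of degree $m$ on $\cp{1}$ over $\LL$, and the whole point is to show that $\cV \mapsto \gamma_\ast R$ is given by rational functions of the coefficients of $f$.

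To make this concrete I would compute the branch divisor by elimination. Writing $[u:v]$ for the coordinates on the target $\cp{1}$, the fibre $\gamma^{-1}([u:v])$ is the intersection of $\cV$ with the auxiliary curve $g_{[u:v]} := v\,z f_z - u\, w f_w = 0$, and a point $[u:v]$ is a critical value precisely when these two curves fail to meet transversally. Eliminating the torus coordinates $(z,w)$ from the system $f = 0$, $g_{[u:v]} = 0$, and the vanishing of the Jacobian $\partial(f,g_{[u:v]})/\partial(z,w) = 0$ — via iterated resultants, or equivalently the Chow form of the critical scheme — produces a form $B(u,v) = \sum_i c_i(f)\, u^i v^{m-i}$, bihomogeneous of degree $m$ in $(u,v)$ and, as one checks from the homogeneity of resultants, of a common degree $D$ in the coefficients of $f$. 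Because the $c_i(f)$ share this degree $D$, the assignment $[f] \mapsto [\,c_0(f) : \cdots : c_m(f)\,]$ descends to a rational map $\LL \dashrightarrow \cp{m}$, algebraic on the Zariski-open locus where $B \not\equiv 0$ has the expected degree $m$; this open set is exactly the domain of definition of $\ly$.

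The main obstacle I anticipate is controlling the extraneous factors of the elimination, so that the extracted form $B$ has degree exactly $m = \deg(\gamma_\ast R)$ and its zero locus really is the branch divisor rather than a proper multiple of it. The factors $z f_z$ and $w f_w$ vanish along the toric boundary, so the resultant can acquire spurious components supported on the coordinate axes or on the base locus of $\LL$; these appear as fixed factors of $B$ independent of $\cV$ and must be divided out. Likewise the non-reduced structure of the relative ramification scheme can inflate the multiplicity along the branch divisor, so a careful degree-and-multiplicity count is needed. For this bookkeeping I would rely on the computation of $\deg \gamma$ (Proposition \ref{deglogGauss}) together with Riemann--Hurwitz to pin down the target degree $m$, and on a local transversality analysis at a generic critical point to verify that $B$ vanishes there to order one. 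Once the correct factor is isolated, algebraicity of $\ly$ follows at once from the polynomial dependence of the $c_i$ on $f$.
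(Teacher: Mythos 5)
Your strategy --- eliminate $(z,w)$ from the system $f = \gamma_{f,[u:v]} = 0$ plus the vanishing of the Jacobian determinant, then extract the branch form from the resulting polynomial in $(u,v)$ --- is not the proof given in the paper: it is the computational recipe the paper records separately in Remark \ref{rem:ll}, together with an explicit warning about exactly the point where your argument has a gap. The output of this elimination contains $\ly(f)$ only as a proper factor; the extra factor comes from solutions of the three equations lying on $\Xd^\infty$. Your plan for removing these extraneous factors is the missing step, and as stated it rests on a claim that is not correct: the spurious part is \emph{not} ``a fixed factor of $B$ independent of $\cV$''. In the worked example of Section \ref{sec:ex} for $\Delta = \conv(\{1,z,w,zw,z^2\})$, the computed resultant equals $\alpha^3 \beta\, v^2(u+v)^2(-\alpha+\alpha^2+\beta)$ times the genuine degree-$4$ branch form: the $(u,v)$-dependent part $v^2(u+v)^2$ is indeed supported at fixed points of $\cp{1}$ (directions of edges of $\Delta$), but the full extraneous factor involves $\alpha^3\beta(-\alpha+\alpha^2+\beta)$, which depends on $f$ and vanishes along the discriminant and along coefficient degenerations. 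To complete your argument you would have to prove, in general: (i) that the spurious roots in $[u:v]$ occur only at the finitely many edge directions of $\Delta$, with multiplicities constant on a Zariski-dense open subset of $\D_0$ --- this requires an analysis of the behaviour of $z\partial_z f$, $w\partial_w f$ and of the Jacobian on $\Xd^\infty$ that the proposal does not carry out; (ii) that exact division by this fixed $(u,v)$-factor yields a form whose coefficients are still polynomial in the coefficients of $f$; and (iii) that the genuine branch points appear in the quotient with multiplicity one. Items (ii) and (iii) are the routine part of your ``degree-and-multiplicity count''; item (i) is the crux, and nothing in the proposal addresses it.

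It is worth contrasting this with how the paper sidesteps the problem. The paper uses the same auxiliary curve $\cV(\gamma_{f,[u:v]})$, but never introduces the Jacobian as a third equation. Instead it evaluates the GKZ resultant $R_\Delta\left(f,\gamma_{f,[u:v]},\,\cdot\,\right)$ \emph{as a polynomial in the third slot}: for fixed $(f,[u:v])$ this splits into $d = vol(\Delta)$ linear forms, one for each intersection point of $\cV(f)$ with $\cV(\gamma_{f,[u:v]})$, hence defines an algebraic map $\phi$ from $\D_0 \times \cp{1}$ to a symmetric product $Sym_d(\cp{m})$. Tangency, i.e. $[u:v]$ being a branch point of $\TG_f$, is then detected as the collision of two of these $d$ points, that is, as $\phi^{-1}(\mathcal{H})$ where $\mathcal{H}$ is the (algebraic) singular locus of $Sym_d(\cp{m})$. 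A transversal crossing, on the boundary or not, never makes two intersection points collide, so the boundary solutions that pollute your resultant simply do not enter: the bihomogeneous polynomial cutting out the branching locus is obtained directly, with no extraneous factor to identify and divide out. That is precisely what the symmetric-product formulation buys over Jacobian elimination, and it is why the paper proves Theorem \ref{thm:llalg} this way while relegating your construction to a remark used only for explicit computations.
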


In order to construct various topological type for the pairs $(\cV,S(\cV))$, the set of singular curves plays an important role, as it does in \textit{Hilbert's sixteenth problem}. It allows to build $S(\cV)$ inductively on the geometric genus of $\cV$. Starting from a curve with nodal points, we aim to understand the mutations of $S(\cV)$ after smoothing the nodal points one by one. As the main ingredient in the proof of Theorem \ref{thm:extLL}, we prove the following.

\begin{Theorem}
The map $\ly$ extends algebraically to the set of nodal curves in $\LL$. Moreover, we provide the explicit formula \eqref{eq:extLL} for the extension.
\end{Theorem}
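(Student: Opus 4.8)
The plan is to realize $\ly$ near the nodal stratum through explicit elimination theory, and to identify the indeterminacy created by a node as a base point of the polar pencil. Work in the torus $\ttor$ with $\cV = \{f = 0\}$, and write $\theta_z = z\partial_z$, $\theta_w = w\partial_w$, so that $\gamma = [\theta_z f : \theta_w f]$ and the fibre $\gamma^{-1}([\lambda:\mu])$ is cut out on $\cV$ by the polar $L_{[\lambda:\mu]} = \{\mu\,\theta_z f - \lambda\,\theta_w f = 0\}$. After a generic choice of toric coordinates, I would eliminate one variable by setting $g_{\lambda,\mu}(z) = \operatorname{Res}_w\!\big(f,\ \mu\,\theta_z f - \lambda\,\theta_w f\big)$; its $z$-roots are the fibre points, and $\operatorname{disc}_z g_{\lambda,\mu}$ coincides, up to an elementary factor independent of the branch values, with the branching form $\Delta_f(\lambda,\mu)$ whose projective class is $\ly(f)$. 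The coefficients of $\Delta_f$ are polynomials in those of $f$, which is the content of the algebraicity statement already established; the remaining task is to extend $[\Delta_f]$ across the nodal locus, where a priori it is only a rational map.

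The key local observation is that at an ordinary double point $p_0 = (z_0,w_0)$ one has $f(p_0) = \theta_z f(p_0) = \theta_w f(p_0) = 0$, so $p_0$ lies on every polar $L_{[\lambda:\mu]}$: the node is a base point of the polar pencil. Since the two branches at $p_0$ are smooth and transverse, a generic polar meets $\cV$ at $p_0$ with multiplicity $2$, so $(z-z_0)^2$ divides $g_{\lambda,\mu}$ as a fixed factor, and consequently $\operatorname{disc}_z g_{\lambda,\mu}\equiv 0$ on the nodal stratum. This identical vanishing is precisely the indeterminacy of $\ly$ along nodal curves. I would therefore define the extension by passing to the mobile part: set $h_{\lambda,\mu} = g_{\lambda,\mu}/(z-z_0)^2$, whose $z$-roots are the genuine fibre of the logarithmic Gauss map $\tilde\gamma$ on the normalization $\tilde\cV$, and take $\operatorname{disc}_z h_{\lambda,\mu}$ as the branching form of $\tilde\gamma$. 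Two points then have to be pinned down: first, that the division by $(z-z_0)^2$ is exact as a polynomial identity over the whole nodal stratum, so that both $h_{\lambda,\mu}$ and $\operatorname{disc}_z h_{\lambda,\mu}$ have coefficients algebraic in those of $f$; and second, a degree correction, since the geometric genus has dropped by one, $\operatorname{disc}_z h$ has degree $m-2$ rather than $m$ in $(\lambda,\mu)$, so it must be multiplied by a degree-$2$ factor to land in $\cp m$.

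To identify that correction I would compute the limit of $[\Delta_f]$ along a one-parameter smoothing $f_t \to f_0$ of the node. Smoothing an ordinary double point raises the genus by one, hence by Riemann--Hurwitz adds two to the degree of the branching divisor of $\gamma$; the two extra branch points live on the vanishing handle and, as $t \to 0$, their critical values converge to the two logarithmic Gauss directions $\gamma_1,\gamma_2$ of the branches at $p_0$. These directions are themselves algebraic in $f$: if $(X_i{:}Y_i)$ are the null directions of the logarithmic Hessian quadratic $(\theta_z^2 f)\,X^2 + 2(\theta_z\theta_w f)\,XY + (\theta_w^2 f)\,Y^2$ at $p_0$ (the branch tangents), then $\gamma_i = \big[(\theta_z^2 f)X_i + (\theta_z\theta_w f)Y_i : (\theta_z\theta_w f)X_i + (\theta_w^2 f)Y_i\big]$, a pair of roots of a quadratic whose entries are the logarithmic Hessian of $f$ at $p_0$. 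Writing $\ell_{\gamma_i}(\lambda,\mu)$ for the linear form vanishing at $\gamma_i$, the proposed extension is the displayed formula \eqref{eq:extLL},
\[
\ly(f_0) \;=\; \big[\,\operatorname{disc}_z h_{\lambda,\mu}\cdot \ell_{\gamma_1}\,\ell_{\gamma_2}\,\big]\in \cp m,
\]
with the product running over all nodes in the general case. I would finish by checking that this algebraic expression equals $\lim_{t\to 0}[\Delta_{f_t}]$, which both certifies that it is the honest extension of $\ly$ and that the limit is independent of the chosen smoothing.

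The main obstacle I expect lies at the interface between the second and third steps: proving that the fixed factor $(z-z_0)^2$ divides $g_{\lambda,\mu}$ \emph{exactly} and uniformly over the nodal stratum --- equivalently, that the polar pencil has a product structure at an ordinary double point, so that the stripped resultant $h_{\lambda,\mu}$ is genuinely polynomial in the coefficients of $f$ --- while simultaneously controlling the multiplicity of the correction, that is, showing that each of $\gamma_1,\gamma_2$ occurs to first order so the factor is $\ell_{\gamma_1}\ell_{\gamma_2}$ and not a higher power. Both demands require a careful local model of the degeneration at the node together with the matching Riemann--Hurwitz count on the vanishing handle; once these are in place, everything else is formal manipulation of resultants and discriminants combined with the algebraicity already in hand.
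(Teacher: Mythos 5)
Your overall strategy---realize $\ly$ by elimination theory near the nodal stratum, strip off the fixed contribution of the node as a base point of the polar pencil, and add back a correction divisor supported at the two logarithmic Gau\ss{} directions of the branches---is structurally parallel to the paper's proof, and your identification of the \emph{support} of the correction (the values $\TG_{f_0}$ takes at the two preimages of the node) is correct. But there is a genuine error at the decisive point: the multiplicity. You claim that smoothing the node raises the genus by one and ``hence by Riemann--Hurwitz adds two to the degree of the branching divisor'', so that the correction factor is $\ell_{\gamma_1}\ell_{\gamma_2}$. This count forgets that the degree of the logarithmic Gau\ss{} map itself also jumps by $2$ under the smoothing (Proposition \ref{deglogGauss}: $\deg \TG_f = -\chi\big(\tilde{\cV}(f)\big) + \vert \cV(f)^\infty\vert$, and $\chi$ drops by $2$). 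Writing Riemann--Hurwitz \eqref{eq:RHformula} as $\deg R_\gamma = 2\deg\gamma - \chi$, the smoothing changes $\deg\gamma$ by $+2$ and $\chi$ by $-2$, so the branching degree jumps by $6$ per node, not $2$. The correct correction factor is therefore $\big(\ell_{\gamma_1}\ell_{\gamma_2}\big)^3$: each preimage of the node absorbs \emph{three} ramification points in the limit, which is exactly the coefficient $3$ in front of $(\TG_f)_\ast(s(\mathcal{C}))$ in \eqref{eq:extLL}. Establishing this ``$+3$'' is the hard local content of the paper (Proposition \ref{prop:dbcov} and Lemma \ref{lem:limram}): one constructs a unique family of vanishing cylinders on which $\gamma_{f_t}$ is an unramified \emph{double} cover of a fixed cylinder in $\cp{1}$, and then compares winding numbers---a small loop around a preimage $q$ of ramification order $k$ maps with winding $k+1$, while the boundary of the vanishing cylinder maps with winding $2$, forcing $k+3$ ramification points in the annulus between them. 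Your closing remark (``showing that each of $\gamma_1,\gamma_2$ occurs to first order so the factor is $\ell_{\gamma_1}\ell_{\gamma_2}$ and not a higher power'') asserts the wrong answer to precisely the question you flag as the main obstacle; had you carried out the limit computation $\lim_{t\to 0}[\Delta_{f_t}]$ that you propose as a final check, it would have contradicted your formula, and your candidate does not even have the right degree to lie in $\cp{m}$ (the mobile discriminant has degree $m-6$ per node, not $m-2$).

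A second, independent gap: your extension ignores nodal curves having irreducible components on which $\gamma_f$ is constant, i.e.\ binomial cylinders (Corollary \ref{cor:binom}), such as the union of a line with $\{z^aw^b = c\}$. On such a component the fibres of the Gau\ss{} map are not finite, the polar pencil is degenerate along it, and the resultant/discriminant bookkeeping you set up breaks down entirely; this is why formula \eqref{eq:extLL} carries the separate first sum with coefficient $3\cdot\deg(s(\mathcal{C}))-2$ over the components $\mathcal{C}\in I_f^-$, proved by a different Euler-characteristic count (a degree-$0$ map off the cylinder boundaries). The example in Section \ref{sec:ex}, where a nodal curve splits as a line plus a binomial cylinder and $\ly(f) = \TG_f(q) + 3\cdot\TG_f(p)$, shows this stratum is nonempty and that its contribution follows neither your rule nor the naive ``$6$ per node'' count. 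Any correct extension must treat this case separately, and your elimination-theoretic setup as written does not.
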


Although it is beyond the present interest, we can wonder about the possibility to extend $\ly$ to the whole $\LL$ and further applications. Already, it is interesting to notice the similarities between the formula \eqref{eq:extLL} and the one given in \cite[Section 4.2]{ELSV}.

In the very last section of this paper, we illustrate the present results by explicit computation of examples.\\

Before concluding the introduction, we wish to discuss briefly some applications and connections with different topics.

Amoebas has been already studied by several people (\cite{FPT}, \cite{Ru}, \cite{Mikh}, \cite{dWS},...), generally via their contour, $i.e.$ the set of critical values of the map $\mu$. As the contour of amoebas can have singularities that survive to small perturbations, the set $S(\cV)$ is a more reasonable object to study and has not been investigated much so far.

For a real algebraic curve $\cV \in \LL$, the real locus $\R \cV$ is contained in $S(\cV)$. If the inclusion $\R \cV \subset \cV$ solely does not give much insight on the topological pair $(\R \Xd, \R \cV)$, the sequence of inclusions $\R \cV \subset S(\cV) \subset \cV$ is much more valuable as it gives informations on the amoeba $\mu(\R \cV)\subset \R^2$.

To insist on the importance of the critical locus, recall that simple Harnack curves and their generalization \cite{L2} are characterized by the containment $\R \cV \subset S(\cV)$ being an equality.

Finally in \cite{It}, Itenberg provides some obstructions to construct certain topological type of real algebraic curves in the plane via patchworking methods. A better knowledge of the critical locus would certainly lead to a better understanding of the limitation of tropical methods in constructing real algebraic curves with prescribed topology.

\begin{ack}
The author is grateful to Benoit Bertrand, R\'{e}mi Cr\'{e}tois, Ilia Itenberg, Nikita Kalinin, Grigory Mikhalkin, Johannes Rau and especially to Timo de Wolff for useful discussions and comments. 
This work has been carried over several years during which the author has been supported by the grant ERC TROPGEO, the FNS project 140666 and the Knut and Alice Wallenberg Foundation.
\end{ack}

\newpage
\tableofcontents

\setcounter{Theorem}{0}

\section{Setting and statements}\label{sec:stat}

\subsection{Curves in toric surfaces}

A Laurent polynomial $f \in \C \left[z^{\pm 1}, w^{\pm 1}\right]$ is supported on a finite subset $A \subset \Z^2$ if it can be written
\[ f(z,w)= \sum_{(a,b) \in A} c_{(a,b)} z^aw^b\]
with $c_{(a,b)} \in \C^\ast$. We denote by $\cV(f) := \left\lbrace (z,w) \in \ttor \, \vert \, f(z,w)=0 \right\rbrace$ the curve given by any such polynomial $f$. Define the amoeba $\A(f)$ of the curve $\cV(f)$ to be the image of $\cV(f)$ itself by the amoeba map
\[
\begin{array}{rcl}
\A  : \ttor & \rightarrow & \mathbb{R}^2 \\
(z,w) & \mapsto & \big( \log \vert z \vert, \log \vert w \vert \big)
\end{array}
\]
Whenever $\cV(f)$ is smooth, we can define the critical locus of the restriction of $\A$ to $\cV(f)$ by
\begin{equation}\label{eq:critloc}
S(f) := \left\lbrace p \in \cV(f) \; \vert \;  T_p \mathcal{A} : T_p \cV(f) \rightarrow \R^2 \text{ is not submersive} \right\rbrace.
\end{equation}

We now compactify  of the above picture. Let $\Delta \subset \R^2$ be a bounded convex lattice polygon of dimension 2. The polygon $\Delta$ defines a smooth complete toric surface $\Xd_\Delta$ as follows: label the elements of $\Delta \cap \Z^2 = \{ (a_0,b_0),...(a_m,b_m)\}$ and consider the monomial embedding  $(\C^\ast)^2 \rightarrow \cp{m}$ given by
\[ (z,w) \mapsto \left[ z^{a_0}w^{b_0}:...: z^{a_m}w^{b_m}\right]. \]
Then $\Xd_\Delta$ is defined as the closure of $(\C^\ast)^2$ in $\cp{m}$. 
Denote by $\Li_\Delta$ the line bundle on $\Xd_\Delta$ given by the inclusion in $\cp{m}$ and by $\vert \Li_\Delta \vert := \P (H^0(\Xd_\Delta,\Li_\Delta))$ the space of hyperplane sections of $\Xd_\Delta \subset \cp{m}$. In the rest of the paper, we will often use the notation $\Xd$ and $\Li$ for $\Xd_\Delta$ and $\Li_\Delta$ respectively. Then, any hyperplane section $s \in \LL$ is given as the closure $\overline{\cV(f)} \subset \Xd$ for some Laurent polynomials $f$ whose support is contained in $\Delta \cap \Z^2$. Then, the elements of $\LL$ corresponds to projective classes $[f]$, or equivalently to $\cV(f)$ for such polynomials $f$. To avoid cumbersome notation,  we do not distinguish the curve $\cV(f)$ from its closure in  $\Xd$.

Up to a reparametrization identifying $\R^2$ with $\itr(\Delta)$, the map $\A$ is compactified by the moment map $\mu : \Xd_\Delta \rightarrow \Delta$, see \cite{GKZ}.

For a generic $\left[f\right] \in \LL$, the curve $\cV(f)$ has genus $g_\Delta := \itr(\Delta) \cap \Z^2$, see \cite{Kho}. If we denote the union of toric divisors $\Xd^\infty:= \Xd \setminus \ttor$, then we have the following intersection multiplicity  
$$\cV(f) \cdot \Xd^\infty = b_\Delta$$ 
where $b_\Delta := \partial \Delta \cap \Z^2.$ Defining $\cV(f)^\infty := \cV(f) \cap \Xd^\infty$, it follows that $\cV(f)^\infty$ consists in at most $b_\Delta$ many points. The cardinal of $\cV(f)^\infty$ is maximal whenever the restriction of $f$ to any edge of $\Delta$, which amounts to a univariate polynomial, has distinct roots. Hence, $\vert  \cV(f)^\infty \vert = b_\Delta$ for generic $[f] \in \LL$.

Following \cite{GKZ},  the discriminant associated to $\Delta$ refers to the space of singular curves inside $\LL$. If moreover $\vert \Delta \cap Z^2 \vert \geq 4$, the discriminant is always given as an algebraic hypersurface, see \cite[Ch.1, Sec.1]{GKZ}. For any $k\geq 0$, let us denote by $\D_k \subset \LL$ the set of curves having exactly $k$ double points and no other singularities. Then, $\D_0$ corresponds to the set of smooth curves, $\D_k$ is an open subset of $\overline{\D_{k-1}}$ for $1\leq k \leq g_\Delta$ and $\D_k$ is empty for $k>g_\Delta$. Finally, define $$ \D := \cup_{1\leq l \leq g_\Delta} \D_l.$$

\subsection{The logarithmic Gau\ss{} map}

 For a smooth curve $\cV(f) \subset \Xd$, the logarithmic Gau\ss{} map $\gamma_f  :  \cV(f) \rightarrow \cp{1}$ is given on $\ttor$ by
\[  \gamma_f (z,w) = \Big[ z \cdot \partial_z f(z,w) : w \cdot \partial_w f(z,w) \Big]\]
This map is locally the composition of (any branch of) the coordinatewise complex 
logarithm with the classical Gau\ss{} map. 

In the singular case, $\gamma_f$ is only defined on the smooth part of $\cV(f)$. We solve this issue as follows. Denote by $ \pi :  \tilde{\cV}(f) \rightarrow \cV(f)$ the normalization of $\cV(f)$. It is well defined up to automorphism of the source, but this ambiguity will not matter for our purpose. When $\cV(f)$ is smooth, $\pi$ is simply the identity. Riemann's removable singularity Theorem \cite{GR} allows to extends the rational map $\pi \circ \gamma_f$ to an algebraic map
\begin{equation}\label{eq:tg}
\TG_f  :  \tilde{\cV}(f) \rightarrow \cp{1}.
\end{equation}

For any topological space $S$, denote by $\chi(S)$ its Euler characteristic. Define also  $\tilde{\cV}(f)^\circ := \tilde{\cV}(f) \setminus \Xd^\infty$.

\begin{Proposition}[\cite{L2}]
\label{deglogGauss}
Assume that $ \pi : \tilde{\cV}(f) \rightarrow \cV(f)$ is an immersion. Then we have 
\[ deg (\TG_f) =  - \chi \big( \tilde{\cV}(f) \big) + \vert \cV(f)^\infty \vert. \]
In particular, $deg (\TG_f) = vol(\Delta)$ for a generic $[f] \in \LL$,.
\end{Proposition}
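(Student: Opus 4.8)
The plan is to turn the computation of $\deg(\TG_f)$ into a zero--pole count for a single meromorphic $1$-form on $\tilde{\cV}(f)$, and then to read off the answer from the fact that a canonical divisor on a curve has degree $-\chi$. Concretely, I would fix a generic $[a:b]\in\cp{1}$ and pull back to $\tilde{\cV}(f)$, via $\pi$, the torus-invariant logarithmic form $\eta := b\,\frac{dz}{z} - a\,\frac{dw}{w}$. The key starting observation is that on $\cV(f)$ the relation $df=0$ reads $z\partial_z f\cdot\frac{dz}{z} + w\partial_w f\cdot\frac{dw}{w}=0$, so that in the basis $\{\frac{dz}{z},\frac{dw}{w}\}$ of the cotangent space of $\ttor$ the tangent line to $\cV(f)$ is exactly the kernel of the covector $(z\partial_z f,w\partial_w f)$. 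Hence $\eta$ restricts to $0$ on $T_p\cV(f)$ at an interior point $p$ precisely when $(b,-a)$ is proportional to $(z\partial_z f,w\partial_w f)$, i.e.\ precisely on the fibre $\TG_f^{-1}([b:-a])$. Working with $\eta$ as an intrinsic covector, rather than dividing out one of the two coordinates, avoids the spurious $0/0$ cancellations that plague a direct use of $\frac{dz}{z}$.

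For the zeros I would take $[a:b]$ generic, avoiding the finitely many critical values of $\TG_f$ as well as the finite image $\TG_f\big(\tilde{\cV}(f)\setminus\tilde{\cV}(f)^\circ\big)$ of the points over $\Xd^\infty$. Then $\TG_f^{-1}([b:-a])$ consists of $\deg(\TG_f)$ distinct points, all contained in $\tilde{\cV}(f)^\circ$, and at each of them $\TG_f$ is unramified, so $\eta$ vanishes to order exactly $1$. Thus the zero divisor of $\eta$ has degree $\deg(\TG_f)$ and is supported in the torus part.

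For the poles, note first that $\eta$ is holomorphic and nonvanishing on the smooth torus locus and remains holomorphic across the immersed nodes, since $z,w$ stay finite and nonzero there; hence all poles lie over $\cV(f)^\infty$. Here I would invoke the toric picture: along each branch of $\tilde{\cV}(f)$ approaching a toric divisor $D_e$, the monomials satisfy $z\sim s^{\beta_z}$, $w\sim s^{\beta_w}$ in a local parameter $s$, with $(\beta_z,\beta_w)$ a positive multiple of the primitive direction $n_e$ of $e$; consequently $\eta = \big(b\beta_z - a\beta_w\big)\frac{ds}{s}+\cdots$ has a \emph{simple} pole with residue proportional to $\langle(b,-a),n_e\rangle$, which is nonzero for generic $[a:b]$ and every edge. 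So $\eta$ has exactly one simple pole per branch over infinity; under the immersion hypothesis each point of $\cV(f)^\infty$ is a single branch, giving $|\cV(f)^\infty|$ poles in total. This boundary analysis — pinning down that the restriction really has a genuine simple pole at every point at infinity and that the branch count matches $|\cV(f)^\infty|$ — is the one step I expect to require care; the rest is a formal count.

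Combining the two steps, the divisor of the meromorphic $1$-form $\eta$ on $\tilde{\cV}(f)$ has degree $\deg(\TG_f) - |\cV(f)^\infty|$, and since any such divisor has degree $-\chi\big(\tilde{\cV}(f)\big)$ we obtain $\deg(\TG_f) = -\chi\big(\tilde{\cV}(f)\big) + |\cV(f)^\infty|$. For the final assertion, a generic $[f]$ gives a smooth curve of genus $g_\Delta=|\itr(\Delta)\cap\Z^2|$ with $|\cV(f)^\infty|=b_\Delta=|\partial\Delta\cap\Z^2|$, so $\deg(\TG_f)=2g_\Delta-2+b_\Delta$; Pick's formula $\mathrm{area}(\Delta)=g_\Delta+b_\Delta/2-1$ then yields $\deg(\TG_f)=2\,\mathrm{area}(\Delta)=vol(\Delta)$.
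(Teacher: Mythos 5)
The paper itself contains no proof of this proposition: it is imported wholesale from \cite{L2}, so there is no internal argument to compare yours against. That said, your strategy is the natural one for this statement (and is in the spirit of the source): restrict the torus-invariant form $\eta = b\,\frac{dz}{z} - a\,\frac{dw}{w}$ to $\tilde{\cV}(f)$, identify its zero set with the fibre $\TG_f^{-1}([b:-a])$, verify that these zeros are simple when the fibre is unramified, verify that the poles sit over $\Xd^\infty$ and are simple for generic $[a:b]$, and conclude from the fact that the divisor of a meromorphic $1$-form on $\tilde{\cV}(f)$ has degree $-\chi\big(\tilde{\cV}(f)\big)$. The identification of zeros with the fibre, the simplicity of the zeros at unramified points, the local computation $\eta=(b\beta_z-a\beta_w)\frac{ds}{s}+\cdots$ at infinity, and the concluding Pick's-formula step are all correct as you set them up.

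The one genuine flaw is the assertion that the immersion hypothesis forces each point of $\cV(f)^\infty$ to be unibranch; it does not. Take $f=(z+w-1)(z+w-2)$ with $\Delta$ the $2$-simplex: the normalization is two disjoint copies of $\cp{1}$, $\pi$ is an immersion, but the two lines meet at the boundary point $[0:1:-1] \in \Xd^\infty$. Here $\deg(\TG_f)=2$ (each line has log-Gauss degree $1$), whereas $-\chi\big(\tilde{\cV}(f)\big)+\vert\cV(f)^\infty\vert = -4+5 = 1$. What your zero/pole count actually proves is the formula with $\vert\cV(f)^\infty\vert$ replaced by the number of points of $\tilde{\cV}(f)$ lying over $\Xd^\infty$, i.e.\ the number of local branches at infinity. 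That is the correct general statement; it coincides with the proposition exactly when no two branches of $\cV(f)$ meet on $\Xd^\infty$ --- an assumption which is implicit in the proposition as quoted, and harmless for the way the paper uses it, since the relevant curves there are transverse to $\Xd^\infty$ (cf.\ Remark \ref{rmk:degree}). So either add that hypothesis explicitly, or restate your pole count in terms of branches; as written, the deduction ``immersion $\Rightarrow$ one branch per boundary point'' is the step that breaks.
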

\noindent
Here, $vol$ is twice the Euclidean area. 

\begin{Remark}\label{rmk:degree}
In the above statement, we mean by generic that $\cV(f)$ is smooth and intersect $\Xd^\infty$ transversally. In general, a tangency point of order $k$ between $\cV(f)$ and $\Xd^\infty$ is responsible for the decreasing of $deg(\TG_f)$ by $(k-1)$ compared to the transversal case.
\end{Remark}

\begin{Corollary}\label{cor:binom} 
Let $f$ be a Laurent polynomial such that $\cV(f)$ is smooth, reduced and such that $\gamma_f$ is constant. Then up to multiplication by a monomial, $f =z^aw^b-c$ where $c \in \C^\ast$ and $a$ and $b$ are coprime.
\end{Corollary}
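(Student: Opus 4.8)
The plan is to translate the constancy of $\gamma_f$ into a first order linear differential condition on $f$ and then read off the shape of the support of $f$. Write $[\alpha : \beta] \in \cp{1}$ for the constant value of $\gamma_f$ and introduce the toric logarithmic derivation
\[ D := \beta\, z\partial_z - \alpha\, w\partial_w. \]
The equality $\gamma_f = [z\partial_z f : w\partial_w f] = [\alpha : \beta]$ along $\cV(f)$ says precisely that the Laurent polynomial $Df = \beta\, z\partial_z f - \alpha\, w\partial_w f$ vanishes identically on $\cV(f)$. The decisive feature of $D$ is that it acts diagonally on monomials, $D(z^aw^b) = (\beta a - \alpha b)\, z^aw^b$, so that for $f = \sum_{(a,b)} c_{(a,b)} z^aw^b$ one has $Df = \sum_{(a,b)} (\beta a - \alpha b)\, c_{(a,b)} z^aw^b$, a Laurent polynomial whose support is contained in that of $f$.

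Next I would exploit that $\cV(f)$ is reduced. In the coordinate ring $\C[z^{\pm 1}, w^{\pm 1}]$ of $\ttor$, reducedness means $f$ is squarefree, so $(f)$ is radical and the Nullstellensatz gives $I(\cV(f)) = (f)$. As $Df$ vanishes on $\cV(f)$, we get $f \mid Df$. I then compare Newton polygons: writing $Df = f\cdot h$ and using that the Newton polygon of a product is the Minkowski sum of the factors' Newton polygons, the inclusion $\operatorname{supp}(Df) \subseteq \operatorname{supp}(f)$ yields $\conv(\operatorname{supp} f) + \conv(\operatorname{supp} h) \subseteq \conv(\operatorname{supp} f)$; comparing areas forces $\conv(\operatorname{supp} h)$ to be a single point, and since a polygon cannot be translated nontrivially into itself, that point is the origin. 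Hence $h$ is a constant $\kappa$ and $Df = \kappa f$. Comparing coefficients gives $\beta a - \alpha b = \kappa$ for every exponent $(a,b)$ of $f$, so $\operatorname{supp}(f)$ is contained in the affine line $\{\beta a - \alpha b = \kappa\}$ (the case $Df \equiv 0$ producing the line through the origin).

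It remains to turn collinear support into the stated normal form. Let $(p,q)$ be the primitive integer direction of that line; every exponent of $f$ has the form $(a_0,b_0) + k\,(p,q)$, whence $f = z^{a_0}w^{b_0}\, P(z^pw^q)$ for a univariate polynomial $P$. After multiplying by the monomial $z^{-a_0}w^{-b_0}$ we may assume $f = P(z^pw^q)$; reducedness forces $P$ to be squarefree. Since $\gcd(p,q)=1$, each factor $z^pw^q - r$ cuts out a smooth irreducible component, and distinct factors cut out \emph{disjoint} components; because $\cV(f)$ is a connected smooth curve, $P$ must have a single root. Thus $f = c_1(z^pw^q - c)$ with $c \in \C^\ast$, i.e.\ $f = z^aw^b - c$ with $(a,b)=(p,q)$ coprime, up to a monomial and a scalar. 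As a sanity check, a constant $\gamma_f$ makes $\TG_f$ a constant (degree $0$) map, and Proposition \ref{deglogGauss} then reads $0 = -\chi(\cV(f)) + \lvert \cV(f)^\infty\rvert$, consistent with the rational curve $z^aw^b = c$ meeting $\Xd^\infty$ in exactly two points.

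The two delicate points are the Newton-polygon step, namely ruling out a nonconstant cofactor in $f \mid Df$, which is exactly where the containment $\operatorname{supp}(Df) \subseteq \operatorname{supp}(f)$ is used, and the passage from a squarefree $P$ to a linear $P$. I expect the latter to be the genuine obstacle: smoothness and reducedness \emph{alone} would only produce a disjoint union of binomial curves $\bigcup_i \{z^pw^q = r_i\}$, so isolating the single binomial relies essentially on $\cV(f)$ being connected (integral), which I take to be implicit in it being a smooth curve.
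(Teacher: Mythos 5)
Your argument is correct (granting the point you flag at the end), and it is a genuinely different proof from the one intended in the paper. The paper offers no written proof: the statement is placed as a direct consequence of the degree formula of Proposition \ref{deglogGauss}, the implicit argument being topological --- if $\gamma_f$ is constant then $\deg(\TG_f)=0$, so $\chi\big(\tilde{\cV}(f)\big)=\vert\cV(f)^\infty\vert$, which on each connected component forces genus $0$ with exactly two points at infinity, i.e.\ a cylinder in $\ttor$, which one then identifies (again by an implicit integration in logarithmic coordinates) with a translated subtorus $\{z^aw^b=c\}$. Your route is instead purely algebraic: constancy of $\gamma_f$ becomes the identical vanishing of $Df=\beta z\partial_z f-\alpha w\partial_w f$ on $\cV(f)$, reducedness plus the Nullstellensatz give $f\mid Df$, and the diagonal action of $D$ on monomials together with Minkowski-sum monotonicity of Newton polygons forces collinear support, after which the factorization is elementary. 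What your approach buys is self-containedness (no appeal to the formula quoted from \cite{L2}, no topology) and the defining equation obtained directly rather than only the homeomorphism type of the components; what the paper's approach buys is a one-line deduction from a result it has already stated, plus the geometric picture that justifies the name ``binomial cylinder''. Two remarks. First, in the Newton-polygon step, ``comparing areas'' is not quite sufficient when $\conv(\operatorname{supp} f)$ is a segment, since all areas then vanish; the clean version is via support functions: $\conv(\operatorname{supp} f)+\conv(\operatorname{supp} h)\subseteq\conv(\operatorname{supp} f)$ forces the support function of $\conv(\operatorname{supp} h)$ to be everywhere nonpositive, hence $\conv(\operatorname{supp} h)=\{0\}$. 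Second, the caveat you raise is genuine and worth stating: as literally written (smooth and reduced only) the corollary fails for $f=(zw-1)(zw-2)$, a disjoint union of two parallel binomial cylinders, so connectedness of $\cV(f)$ is tacitly assumed; this is consistent with how the paper uses the corollary, namely applied to the individual connected components of $\tilde{\cV}(f)$ in the definition of $I_f^-$ before Theorem \ref{thm:extLL}.
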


If $f$ is as in the above corollary,  then $\cV(f) \cap \ttor \simeq \C^\ast$ is a cylinder and $\gamma_f (\cV(f)) = \left\lbrace \left[a:b\right] \right\rbrace$. For these reasons, we refer to $\cV(f)$ as a binomial cylinder.

\subsection{The Lyashko-Looijenga mapping}

Let us first fix some notations concerning divisors on Riemann surfaces. A divisor $D$ on a Riemann surface $S$ is a formal sum of point
$$ D := \sum_{p\in S} D(p) \cdot p$$
where $D(p) \in \Z$ is zero for all but finitely many points $ p\in S$. Denote the degree of $D$ by $deg(D):=\sum_{p\in S} D(p)$. Recall that for any
holomorphic map $ F : S \rightarrow \cp{1}$, the ramification divisor $R_F$ on $S$ is defined by
\[R_F := \sum_{p\in S} ord_F(p) \cdot p \]
where the order $ord_F(p)$ of $F$ at $p$ is the smallest integer $n$ so that $F^{(n+1)}(p) \neq 0$. The branching divisor $B_F$ is the push-forward $F_\ast (R_F)$ on $\cp{1}$. Explicitly, 
\[B_F := \sum_{v \in \cp{1}} \Big(\sum_{p\in F^{-1}(v)} ord_F(p) \Big) \cdot v \]
The divisors $R_F$ and $B_F$ have the same degree given explicitly by the Riemann-Hurwitz formula 
\begin{eqnarray}
\label{eq:RHformula}
\chi \left( S \right) = deg( f ) \cdot \chi \left( \cp{1} \right) - deg( R_f ),
\end{eqnarray}
The set of divisors of degree $m$ on $\cp{1}$ is isomorphic to the set $\cp{m}$ of univariate polynomials of degree $m$: to every such polynomial, this isomorphism associates the formal sum of its roots counted with multiplicities, see \cite{GKZ}.  Note also that $ \cp{m}$ is isomorphic to the $m$-th symmetric product $Sym_m(\cp{1})$, see \cite[Ch.4, Sec.2]{GKZ}.

The Lyashko-Looijenga mapping associates to any holomorphic function \linebreak $F : S \rightarrow \cp{1}$ its branching divisor $B_F$, see \cite{ELSV}. If $B_F$ has degree $m$, we encode the latter as a point in $\cp{m}$. We now transpose this construction to the present context. For a generic $\left[f\right] \in \LL$, the curve $\cV(f)$ is smooth and the degree of $\gamma_f$ is equal to $vol(A)$, see Proposition \ref{deglogGauss}. The Riemann-Hurwitz formula implies then that the divisor $B_{\gamma_f}$ has constant degree for such $f$. Let $m$ be this degree. It follows that we can define the Lyashko-Looijenga mapping
\[
\begin{array}{rcl}
\ly : \LL & \dashrightarrow & \cp{m}\\
\left[f\right] & \mapsto & B_{\gamma_f}
\end{array}
\]
associated to the Logarithmic Gau\ss{} map on an open dense subset of $\LL$.

\subsection{Main results}\label{sec:mr}

Let us extend the definition of the critical locus \eqref{eq:critloc} to any curve $\cV(f) \in \LL$ by 
\begin{equation}\label{eq:sf}
S(f) := \TG_f^{-1} (\rp{1}) \subset \tilde{\cV}(f).
\end{equation}
and define
\begin{equation}\label{eq:SA}
	\cS_\Delta  =  \{ \left[f\right] \in \LL \; \vert \:  S(f) \text{ is singular}\} 
\end{equation}
Note that $S(f)$ corresponds to the critical locus of the map $\mathcal{A} \circ \pi$ except at the preimage of the singular points of $\cV (f)$, see \cite{L2}. The topology of $S(f)$ is described below.

\begin{Proposition}\label{prop:S(f)}
Assume that no irreducible component of $\cV(f)$ is a binomial cylinder. Then, $S(f)$ is a union of smoothly embedded circles in $\tilde{\cV}(f)$ having at worst transversal intersection points. Moreover, $S(f)$ is smooth, $i.e$ a disjoint union of circles, if and only if $\TG$ has no branching points on $\rp{1}$.
\end{Proposition}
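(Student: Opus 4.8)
The plan is to reduce the whole statement to a local normal-form computation for the holomorphic map $\TG_f$ at the points of $S(f)=\TG_f^{-1}(\rp{1})$, and then to globalize by compactness. First I would use the hypothesis to guarantee that $\TG_f$ is non-constant on every irreducible component of $\tilde{\cV}(f)$: by Corollary \ref{cor:binom} the logarithmic Gauss map is constant on a component exactly when that component is a binomial cylinder, which is precisely what is excluded. Consequently $\TG_f$ restricts to a non-constant holomorphic map on each (compact) component, its ramification divisor $R_{\TG_f}$ is supported on finitely many points, and in particular only finitely many branching points of $\TG_f$ can lie over $\rp{1}$.

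Next I would analyze $S(f)$ locally at a point $p$ with $v:=\TG_f(p)\in\rp{1}$, writing $k\geq 0$ for the ramification order of $\TG_f$ at $p$ (so $k=0$ at a regular point). The key computation combines two coordinate choices. On the target I fix a holomorphic coordinate $u$ centered at $v$ in which $\rp{1}$ becomes the real axis $\{u\in\R\}$; this is possible because $\rp{1}$ is the real-analytic fixed locus of complex conjugation on $\cp{1}$. On the source, writing $u\circ\TG_f = z^{k+1}h(z)$ with $h(0)\neq 0$ in a local coordinate $z$ at $p$, I absorb the unit factor through the substitution $\zeta:=z\,h(z)^{1/(k+1)}$, which is a local biholomorphism and yields the normal form $u\circ\TG_f=\zeta^{k+1}$ while keeping $\rp{1}=\{u\in\R\}$. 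In these coordinates $S(f)$ is cut out by $\{\zeta^{k+1}\in\R\}$, i.e. by $(k+1)\arg(\zeta)\in\pi\Z$, which is exactly a union of $k+1$ lines through the origin meeting pairwise transversally. Since $\zeta$ is a diffeomorphism onto its image, this shows that near $p$ the set $S(f)$ consists of $k+1$ smooth arcs crossing transversally at $p$: a single smooth arc when $p$ is a regular point ($k=0$), and a genuine transversal crossing when $p$ is a branching point over $\rp{1}$ ($k\geq 1$).

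Finally I would globalize. The surface $\tilde{\cV}(f)$ is compact and $\rp{1}$ is closed, so $S(f)$ is compact; away from the finitely many branching points over $\rp{1}$ it is, by the local model with $k=0$, a smooth compact $1$-manifold, hence a disjoint union of circles. Piecing the smooth arcs straight through each crossing then decomposes $S(f)$ into smoothly immersed circles whose only singularities are transversal intersection points, which gives the first assertion. For the equivalence: if $\TG_f$ has no branching point on $\rp{1}$ then every point of $S(f)$ is of type $k=0$, so $S(f)$ is a smooth disjoint union of circles; conversely, at a branching point of order $k\geq 1$ over $\rp{1}$ the local model produces $k+1\geq 2$ arcs through $p$, so removing $p$ disconnects $S(f)$ locally into $2(k+1)\geq 4$ pieces, which no point of a $1$-manifold admits, and hence $S(f)$ is singular there.

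I expect the main obstacle to be the reconciliation of the two coordinate systems in the local model: the target coordinate that straightens $\rp{1}$ is in general not the one realizing the holomorphic normal form $u=\zeta^{k+1}$, and it is the extraction of the $(k+1)$-th root of the unit factor $h$ into the source coordinate that lets me impose both normalizations simultaneously. Getting this step right is what guarantees that the $k+1$ local branches are genuinely smooth and transversal rather than merely topologically crossing.
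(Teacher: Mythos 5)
Your proposal is correct and takes essentially the same route as the paper: both rest on the observation that $\TG_f$ is a ramified covering near $S(f)=\TG_f^{-1}(\rp{1})$ together with the local normal form $\zeta\mapsto\zeta^{k+1}$ (with the target coordinate straightening $\rp{1}$ to $\R$), under which $S(f)$ is locally $k+1$ lines crossing transversally, giving both the structure result and the smoothness criterion. The only difference is cosmetic: where you piece the arcs straight through each crossing by hand, the paper formalizes this regluing by lifting $S(f)$ to the real projective tangent bundle of $\tilde{\cV}(f)$, where the branches separate into smoothly embedded circles that project back down to immersed circles with transversal intersections.
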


In the case when $\cV(f)$ has a binomial cylinder as an irreducible component, Corollary \ref{cor:binom} implies that the latter is entirely contained in $S(f)$.

\begin{Theorem}\label{thm:sa}
Let $\Delta \subset \R^2$ be a bounded lattice polygon of dimension 2 such that $\vert \Delta \cap \Z^2 \vert \geq 4$. Then, $\cS_\Delta$ is a semi-algebraic set of codimension 1. Moreover, $\cS_\Delta$ is the closure of its maximal stratum.
\end{Theorem}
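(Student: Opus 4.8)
The plan is to translate the defining condition of $\cS_\Delta$ into an incidence condition on the branching divisor and to pull it back through $\ly$. By Proposition \ref{prop:S(f)}, for a curve $\cV(f)$ none of whose components is a binomial cylinder, $S(f)$ fails to be smooth exactly when $\TG_f$ has a branching point on $\rp1$, that is, exactly when the support of $B_{\gamma_f}$ meets $\rp1$. I therefore introduce the subset
\[ \Sigma := \{ D \in \cp{m} \; \vert \; \operatorname{supp}(D) \cap \rp1 \neq \emptyset \}, \]
and note that on the Zariski-dense open set $U \subset \LL$ where $\ly$ is defined one has $\cS_\Delta \cap U = (\ly\vert_U)^{-1}(\Sigma)$. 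The curves carrying a binomial cylinder component form a proper (reducible-curve) subvariety, and by Corollary \ref{cor:binom} such cylinders map to a lattice point $[a:b] \in \rp1$ and hence sit inside $S(f)$; I will check that this extra locus, together with $\cS_\Delta \setminus U$, is absorbed when passing to the closure, so that the generic-codimension analysis may be carried out on $U$.

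Next I would analyse the geometry of $\Sigma$ itself, independently of the curves. Writing a point of $\cp{m}$ as a binary form $P$ with complex coefficients, $\Sigma$ is the locus where the real and imaginary parts of $P$ share a common real projective root; by standard subresultant/sign descriptions this is a semi-algebraic set, and it is the image of $\rp1 \times \cp{m-1} \to \cp{m}$, hence of real codimension $1$. Its maximal stratum $\Sigma^{max}$, consisting of divisors with a single simple real point and all remaining points distinct and off $\rp1$, is a smooth semi-algebraic hypersurface cut out near each of its points by the single equation $\mathrm{Im}(r) = 0$, where $r$ is the local holomorphic coordinate giving the position of the one root on $\rp1$; moreover $\Sigma = \overline{\Sigma^{max}}$ because the deeper strata (several real points, or a real point of higher multiplicity) have codimension $\geq 2$.

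The codimension-$1$ statement then follows by combining the algebraicity of $\ly$ with this description. Since $\ly$ is algebraic, $W := \overline{\ly(U)}$ is an irreducible complex subvariety and $\ly\colon U \to W$ is dominant with equidimensional generic fibres over a dense open $W^\circ$. Provided $W \not\subset \Sigma$, the restriction of the defining function $\mathrm{Im}(r)$ to $W^\circ$ is the imaginary part of a non-constant holomorphic function, so $\Sigma^{max} \cap W^\circ$ is a genuine real hypersurface in $W^\circ$; pulling back through the equidimensional fibration $\ly$ multiplies real codimension by nothing, giving a smooth semi-algebraic submanifold $\cS_\Delta^{max} := (\ly\vert_{U})^{-1}(\Sigma^{max}\cap W^\circ)$ of real codimension $1$ in $\LL$. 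The relation $\Sigma = \overline{\Sigma^{max}}$, the openness of $\ly$ onto $W$ over $W^\circ$, and the higher codimension of the deeper strata (so that $W$ meets them only inside $\overline{W\cap\Sigma^{max}}$) then yield $\cS_\Delta \cap U = \overline{\cS_\Delta^{max}}\cap U$; taking closures in $\LL$ and checking that the boundary locus $\cS_\Delta\setminus U$ (where, if needed, one invokes the extension of $\ly$ to nodal curves) lies in $\overline{\cS_\Delta^{max}}$ gives the final equality $\cS_\Delta = \overline{\cS_\Delta^{max}}$.

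The step I expect to be the main obstacle is the non-degeneracy input $W \not\subset \Sigma$ together with the transversality of $\ly$ to $\Sigma^{max}$ — equivalently, that at a generic point of $\cS_\Delta$ the offending simple critical value of $\gamma_f$ genuinely crosses $\rp1$ under a deformation of $f$, i.e. its position $r$ is a non-constant holomorphic function of $[f]$ with non-vanishing differential. This is delicate precisely because the logarithmic Gau\ss{} critical values are invariant under the torus action on $\ttor$, so the motion cannot be produced by coordinate rescalings and must come from honest deformations of the coefficients of $f$; were $r$ constant on $W$ the set $\Sigma^{max}\cap W$ could drop to codimension $2$ (as for $\{g=0\}$ with $g$ holomorphic), and $\cS_\Delta$ could fail to have the asserted codimension, or $W$ could even be swallowed by $\Sigma$. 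I would settle this by exhibiting one explicit curve realising a transversal crossing of a single simple critical value through $\rp1$ (for instance among the examples of the final section), and then propagating non-degeneracy to a dense open subset via the irreducibility of $\LL$ and the algebraicity of $\ly$; the same explicit family simultaneously certifies $W \not\subset \Sigma$ by exhibiting a curve whose critical values all avoid $\rp1$.
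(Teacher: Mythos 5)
Your structural reduction coincides with the paper's: by Proposition \ref{prop:S(f)}, a smooth curve lies in $\cS_\Delta$ exactly when the support of $\ly(f)$ meets $\rp{1}$, so $\cS_\Delta\cap\D_0=\ly^{-1}(\Sigma)\cap\D_0$ with your $\Sigma$ equal to the paper's locus $\P$ of degree-$m$ divisors with a real point, and semi-algebraicity follows from Theorem \ref{thm:llalg}. You also correctly isolate the two non-degeneracy statements that carry the whole theorem: $\ly(\D_0)$ must not be contained in $\Sigma$, and it must actually meet $\Sigma$. One simplification you are entitled to: full transversality (non-vanishing differential of the root position $r$) is not needed. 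Since $\ly$ is algebraic, each local root $r$ of $\ly(f)$ is either constant or dominant onto $\cp{1}$; in the dominant case $\{\mathrm{Im}(r)=0\}$ is automatically non-empty of real codimension exactly $1$, in the constant case it is empty or everything. Hence the two containment statements above already force the codimension claim, with no transversality argument required.

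The genuine gap is in how you certify those two statements. You propose to exhibit one explicit curve with a simple critical value crossing $\rp{1}$ and one whose critical values all avoid $\rp{1}$, ``for instance among the examples of the final section'', and then to propagate by irreducibility of $\LL$. But the examples there treat three specific small polygons, and irreducibility of $\LL$ only propagates information within a fixed linear system, never from one $\Delta$ to another; the theorem is asserted for \emph{every} polygon with $\vert\Delta\cap\Z^2\vert\geq4$. This is precisely where the paper injects two general existence results that your proposal has no substitute for: simple Harnack curves exist for every $\Delta$ (\cite{Mikh}, Corollary A4), giving a curve with $S(f)$ smooth and hence $\cS_\Delta\neq\D_0$; and a tropical approximation of a tropical curve with a bounded twisted edge (Lemma 2.11 of \cite{L2}) — this is where the hypothesis $\vert\Delta\cap\Z^2\vert\geq4$ is actually used, to guarantee a bounded edge — producing a smooth real curve with a logarithmic inflection point on its real locus, hence $\cS_\Delta\neq\emptyset$. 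Without constructions of this kind valid for arbitrary $\Delta$, your argument establishes Theorem \ref{thm:sa} only for the finitely many polygons where the computation can be done by hand, not in the stated generality.
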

As a consequence, the complement of $\cS_\Delta$ in the set of smooth curve $\D_0$ is possibly disconnected and the topological pair $(\cV(f), S(f))$ may vary from one connected component to the other. In particular, the number $ b_0 \big( S(f) \big) $ of connected components of $S(f)$ may vary. We deduce straight from the definition \eqref{eq:critloc} and Proposition \ref{deglogGauss} that the latter satisfies 
\begin{eqnarray}
	1 \leq b_0 \big( S(f) \big) \leq vol(\Delta)
\label{eq:trivbd}
\end{eqnarray}
for any smooth curve $\cV(f) \in \D_0 \setminus \cS_\Delta$.

\begin{Theorem}\label{thm:upbd}
For any positive integers $k$ and $d$ satisfying $$ d+ d(d-1)/2 \leq k\leq d^2,$$ there exists a smooth curve $\cV(h) \subset \cp{2}$ of degree $d$ such that $S(h)$ is smooth and $b_0(S(h)) = k$. In particular, the upper bound of \eqref{eq:trivbd} is sharp when $\Delta$ is the standard $d$-simplex. 
\end{Theorem}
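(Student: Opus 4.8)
My plan is to translate the count $b_0(S(h))$ into monodromy data of $\TG_h$ and then to realize every prescribed value by smoothing a union of $d$ lines. First I would observe that $\rp{1}$ cuts $\cp{1}$ into two open discs $D_+$ and $D_-$. When $S(h)$ is smooth, Proposition~\ref{prop:S(f)} guarantees that $\TG_h$ has no branch value on $\rp{1}$, so $\TG_h^{-1}(D_+)\to D_+$ is a covering of the disc of degree $vol(\Delta)=d^2$, branched only over interior points, and the components of $S(h)=\TG_h^{-1}(\rp{1})$ are exactly the boundary circles of $\TG_h^{-1}(D_+)$. Consequently $b_0(S(h))$ equals the number of cycles of the monodromy permutation $\sigma_+$ of the $d^2$ sheets around $\partial D_+$, i.e.\ of the product of the local monodromies at the branch values lying in $D_+$. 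The whole statement thus reduces to prescribing how the branch values of $\TG_h$ distribute between $D_+$ and $D_-$.

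Next I would fix a generic union $\cV(f_0)=L_1\cup\dots\cup L_d$ of $d$ lines $L_i=\{a_iz+b_iw+c_i=0\}$ in general position. On each component the restriction of $\TG$ is the degree-one map $(z,w)\mapsto[a_iz:b_iw]$, hence an unramified isomorphism onto $\cp{1}$; so $\TG_{f_0}$ is unramified of degree $d$, the permutation $\sigma_+$ is trivial, and $S(f_0)$ is a disjoint union of $d$ circles, giving $b_0=d$. By Proposition~\ref{deglogGauss} and Riemann--Hurwitz, smoothing one of the $\binom{d}{2}$ nodes raises $\deg\TG$ by $2$ and $\deg R_{\TG}$ by $6$; thus in the smooth curve each node is responsible for two new sheets and six new simple branch values. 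The extension Theorem~\ref{thm:extLL}, through the explicit formula~\eqref{eq:extLL}, is what lets me locate these branch values: for a small smoothing they cluster near the two logarithmic Gau\ss{} directions $v_1^{(n)},v_2^{(n)}\in\cp{1}$ of the node $n$, that is, the two limits of $\TG$ along its branches, which are read off explicitly from the pair of lines meeting at $n$.

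The core of the argument is then a local dichotomy. If both directions $v_1^{(n)},v_2^{(n)}$ lie on the same side of $\rp{1}$ --- which I can force by choosing the two lines through $n$ with suitable complex slopes --- then for a small smoothing all six branch values stay on that side, so over the opposite disc the two new sheets form unbranched discs and $\TG^{-1}(D_+)$ simply acquires two extra boundary circles: the node raises $b_0$ by $2$. If instead the two directions straddle $\rp{1}$ (for instance at a real node), three branch values cross into $D_+$, and a local computation of their product shows it to be of transposition type, so that $\sigma_+$ gains a single cycle and the node raises $b_0$ by $1$. Choosing the $d$ lines so that the $\binom{d}{2}$ node-directions can be assigned these two regimes independently, and smoothing all nodes, I obtain a smooth curve $\cV(h)\in\D_0\setminus\cS_\Delta$ with $b_0(S(h))=d+\sum_n\epsilon_n$, where each $\epsilon_n\in\{1,2\}$ is at my disposal. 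As $(\epsilon_n)$ ranges over $\{1,2\}^{\binom{d}{2}}$ the sum sweeps every integer in $[\binom{d}{2},2\binom{d}{2}]$, so $b_0$ attains every value in $[d+\binom{d}{2},\,d+2\binom{d}{2}]=[d(d+1)/2,\,d^2]$; taking all $\epsilon_n=2$ gives $b_0=d^2$ and the sharpness of~\eqref{eq:trivbd}.

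The step I expect to be genuinely delicate is the local analysis underlying the previous paragraph. It requires extracting from~\eqref{eq:extLL} not only the limiting positions of the six branch values attached to each node but also their local monodromies, and, in the straddling regime, controlling that the three values entering $D_+$ multiply to a transposition rather than to a longer cycle. One must also ensure that the per-node contributions are independent; I would achieve this by smoothing the nodes with well-separated parameters, so that the six branch values born from each node form a tiny cluster near its own $v_1^{(n)},v_2^{(n)}$ and the cycle count of the global $\sigma_+$ can be computed node by node. Finally the smoothing must be chosen with no branch value on $\rp{1}$, so that $S(h)$ stays smooth and $[h]\notin\cS_\Delta$, and the underlying line configuration must be flexible enough to realise any prescribed pattern of the two regimes simultaneously.
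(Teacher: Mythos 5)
Your strategy coincides with the paper's: smooth a union of $d$ lines and let each node contribute $1$ or $2$ new components according to whether the two logarithmic Gau\ss{} directions at the node straddle $\rp{1}$ or lie on one side of it. This is exactly Definition \ref{def:sigma} and Proposition \ref{prop:critnod} of the paper (which gives $b_0(S(h)) = b_0(S(f)) + n_- + 2\,n_+$), and your local dichotomy, including the transposition claim, is correct: it is precisely what the double-cover cylinder of Proposition \ref{prop:dbcov} yields, since $\rp{1}$ is isotopic to the core of a coherent cylinder in the straddling case (one preimage circle, monodromy of cycle type $(2,1,\dots,1)$) and is null-homotopic in it in the same-side case (two preimage circles). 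Your monodromy bookkeeping is an equivalent repackaging of this; the paper avoids it by counting preimage circles of $\rp{1}$ directly, which also sidesteps the issue of which sheets the various transpositions act on.

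The genuine gap is the last step, which you flag but do not carry out: producing, for every $n$ with $0 \leq n \leq d(d-1)/2$, an actual configuration of $d$ lines in general position whose nodes realize exactly $n$ straddling pairs (and no node with a Gau\ss{} direction on $\rp{1}$). Your stronger claim that the two regimes ``can be assigned independently'' to the $\binom{d}{2}$ nodes cannot be taken for granted and is almost certainly false as stated: the sign at $L_i \cap L_j$ is a function of the coefficients of $L_i$ and $L_j$ alone, so all $\binom{d}{2}$ signs are determined by only $d$ lines' worth of parameters and are heavily correlated for large $d$. What the theorem needs, and what is true, is only that every \emph{total} count $n$ occurs; this is where the paper does real work. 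It exhibits four explicit families of lines $L_1,\dots,L_4$, computes the $4\times 4$ matrix of signs $\sigma$ for pairs drawn from these families, and then, writing $n = m(m-1)/2 + r$ with $0 \leq r < m < d$, takes $m-r$ lines from $L_1$, $r$ from $L_2$, one from $L_3$ and $d-m-1$ from $L_4$, so that exactly $\binom{m}{2} + r = n$ nodes have sign $-1$. Without a construction of this kind, your argument only shows that $b_0(S(h)) = d + d(d-1) - n$ for whichever values of $n$ happen to be realizable, not that every $k$ in the stated range is attained.
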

In section \ref{sec:top}, we discuss the sharpness of the lower bound of \eqref{eq:trivbd} and motivate the following.

\begin{Conjecture}\label{conj}
Let $\Delta \subset \R^2$ be a bounded lattice polygon of dimension 2. Then, for any $[f] \in \D_0 \setminus \cS_\Delta$, we have
\begin{eqnarray}
	g_\Delta +1 \leq b_0 \big( S(f) \big)
\label{eq:lowerbd}
\end{eqnarray}
and the bound is sharp.
\end{Conjecture}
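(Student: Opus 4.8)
The plan is to convert \eqref{eq:lowerbd} into a statement about how the genus of $\tilde{\cV}(f)$ is shared between the two hemispheres cut out of $\cp{1}$ by $\rp{1}$, and then to control that sharing using the special nature of the logarithmic Gau\ss{} map.

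\emph{Setup and topological reduction.} Write $\Sigma := \tilde{\cV}(f)$, a closed orientable surface of genus $g = g_\Delta$, and let $\gamma := \TG_f : \Sigma \to \cp{1}$ be the logarithmic Gau\ss{} map, which has degree $vol(\Delta)$ by Proposition \ref{deglogGauss}. The equator $\rp{1}$ separates $\cp{1}$ into two open disks $D_+$ and $D_-$, and we set $\Sigma_\pm := \gamma^{-1}(D_\pm)$. Since $[f] \in \D_0 \setminus \cS_\Delta$, Proposition \ref{prop:S(f)} ensures that $\gamma$ has no branch point on $\rp{1}$; hence $\gamma$ restricts to an unramified covering $S(f) \to \rp{1}$, the set $S(f) = \gamma^{-1}(\rp{1})$ is a disjoint union of $b := b_0(S(f))$ circles, and each of them is a boundary circle of both $\overline{\Sigma_+}$ and $\overline{\Sigma_-}$, so that $\overline{\Sigma_\pm}$ each have $S(f)$ as their full boundary. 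Denoting by $c_\pm$ and $H_\pm$ the number of connected components and the total genus of $\overline{\Sigma_\pm}$, additivity of the Euler characteristic along $S(f)$ (which has vanishing Euler characteristic) gives
\begin{equation}
2 - 2g = \chi(\overline{\Sigma_+}) + \chi(\overline{\Sigma_-}) = 2(c_+ + c_-) - 2(H_+ + H_-) - 2b,
\end{equation}
so that $b = g - 1 + (c_+ + c_-) - (H_+ + H_-)$. Consequently \eqref{eq:lowerbd} is \emph{equivalent} to the purely topological inequality $c_+ + c_- \geq H_+ + H_- + 2$, and it would suffice to establish the stronger bound $c_\pm \geq H_\pm + 1$ on each side, for instance by proving that every connected component of $\Sigma_+$ and of $\Sigma_-$ is planar.

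\emph{The crux and the real case.} The logarithmic Gau\ss{} map must enter at this point: for an arbitrary branched cover of $\cp{1}$ the preimage of one hemisphere can carry arbitrarily large genus (a hyperelliptic cover with all branch points on one side already does), so $c_\pm \geq H_\pm + 1$ is false in general and can only hold because $\gamma$ is the Gau\ss{} map of a holomorphic curve. I would first treat the case when $f$ has real coefficients. Then the antiholomorphic involution $(z,w) \mapsto (\bar z, \bar w)$ lifts to $\Sigma$ and intertwines $\gamma$ with complex conjugation on $\cp{1}$; since the latter fixes $\rp{1}$ and exchanges $D_+$ and $D_-$, the lift exchanges $\Sigma_+$ and $\Sigma_-$, forcing $c_+ = c_-$ and $H_+ = H_-$ and reducing the problem to $c_+ \geq H_+ + 1$. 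The equality case $c_+ = 1$, $H_+ = 0$ then singles out exactly the simple Harnack curves, for which $S(f) = \R\cV$ has the expected $g+1$ ovals; so the conjecture is sharp and its extremal locus is identified. For the planarity of $\gamma^{-1}(D_+)$ I would look for a maximum-principle input, the natural candidate being the convexity of the Ronkin function of $f$, whose Monge--Amp\`ere measure is supported on the amoeba and records precisely the branching of $\gamma$.

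\emph{General curves and the main obstacle.} To remove the reality assumption I would exploit the chamber structure of $\D_0 \setminus \cS_\Delta$: by Theorem \ref{thm:sa} the complement of $\cS_\Delta$ is open with $b_0(S(f))$ constant on chambers, so it is enough to bound $b_0$ below on each chamber, and the walls of $\cS_\Delta$ are crossed exactly when a branch point of $\gamma$ passes through $\rp{1}$. The algebraicity of $\ly$ and its extension across nodal curves (Theorem \ref{thm:extLL}) are the tools that make such wall-crossings tractable, and the strategy is to follow the quantities $c_+ - H_+$ and $c_- - H_-$ through a crossing and show neither can ever drop below $1$. The main obstacle is precisely this genus-control: it is an analytic rather than a combinatorial fact, and the heart of the matter is to prove that the ramification of the logarithmic Gau\ss{} map can never accumulate on a single hemisphere so as to close up a handle of $\Sigma$ lying entirely over $D_+$ or entirely over $D_-$. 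Establishing this — either through a positivity property of the Gau\ss{} map of a holomorphic curve, or by degenerating to the tropical/Harnack locus where the distribution of branch points relative to $\rp{1}$ can be written down explicitly — is where the real work lies.
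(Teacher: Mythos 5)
The statement you set out to prove is Conjecture \ref{conj}: the paper does not prove it, it only motivates it (via the proposition that a smooth \emph{connected} $S(f)$ forces $g_\Delta$ to be even, proved with Mikhalkin's local-degree lemma and the order map of \cite{FPT}, and via sharpness coming from simple Harnack curves). So the only question is whether your argument closes the conjecture, and it does not. Your topological reduction is correct: writing $b=b_0(S(f))$ and $c_\pm$, $H_\pm$ for the number of components and total genus of the closures of $\Sigma_\pm=\TG_f^{-1}(D_\pm)$, the Euler-characteristic count gives $b=g-1+(c_++c_-)-(H_++H_-)$, so \eqref{eq:lowerbd} is equivalent to $(c_++c_-)-(H_++H_-)\geq 2$; in fact this is the same decomposition of $\tilde{\cV}(f)$ along $S(f)$ that underlies the paper's parity proposition. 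The reality-symmetry observation is also sound: for real $f$ the antiholomorphic involution exchanges $\Sigma_+$ and $\Sigma_-$, so $c_+=c_-$ and $H_+=H_-$. But everything then hinges on the genus control $c_\pm\geq H_\pm+1$ (for instance planarity of $\Sigma_\pm$), i.e.\ on the statement that no handle of $\tilde{\cV}(f)$ can lie entirely over one hemisphere of $\cp{1}$. That is the conjecture in disguise: your reduction repackages the difficulty without removing any of it, and, as you acknowledge yourself, you do not prove it.

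Neither of the two mechanisms you point to supplies the missing step. The convexity of the Ronkin function constrains the amoeba and its Monge--Amp\`ere measure, but it does not control where the branch points of $\TG_f$ sit relative to $\rp{1}$, which is what $H_\pm$ depends on; no maximum principle of this kind is available. The wall-crossing scheme via Theorems \ref{thm:sa} and \ref{thm:extLL} cannot start either: to propagate a lower bound through the chambers of $\D_0\setminus\cS_\Delta$ you must show that $c_\pm-H_\pm$ never drops below $1$ at a crossing, and a crossing is exactly a moment where circles of $S(f)$ merge or split and a handle can migrate to one side --- so this is again the same unproven genus control, not a consequence of the two theorems. Two further inaccuracies: equality $b=g_\Delta+1$ does not by itself single out simple Harnack curves (it only forces $c_+-H_+=c_--H_-=1$, not $S(f)=\R\cV$, which is the actual Harnack characterization); and planarity of $\Sigma_\pm$ may well be strictly stronger than \eqref{eq:lowerbd} and fail for curves far from the Harnack locus, so even as a target it is not obviously the right one. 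In short: a correct and useful reformulation, consistent with the paper's own partial evidence, but the analytic heart of the conjecture remains untouched.
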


Let us briefly comment Conjecture \ref{conj}. First, the sharpness of the lower bound suggested here is guaranteed by the existence of Simple Harnack curves for any toric surface $\Xd = \Xd_\Delta$, see \cite[Lemma 5 and Corollary A4]{Mikh}. In the case $X=\cp{2}$, the technique used to prove Theorem \ref{thm:upbd} leaves unfortunately a gap between $g_\Delta+1 = d(d-1)/2 +1$ and $ d(d-1)/2+d$. We claim nevertheless that the remaining cases can be constructed using slightly different techniques.

The topology of $S(f)$ is closely related to the relative position of the branching divisor $B_{\TG_f}$ with respect to $\rp{1} \subset \cp{1}$.  While proving Theorems \ref{thm:sa} and \ref{thm:upbd}, the Lyashko-Looijenga mapping $\ly$ comes naturally into play.

\begin{Theorem}\label{thm:llalg}
Let $\Delta$ satisfy the assumptions of Theorem \ref{thm:sa}. Then, the map $\ly : \LL  \dashrightarrow \cp{m}$ is algebraic.
\end{Theorem}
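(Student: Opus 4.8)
The plan is to exhibit $\ly$ explicitly as a rational map given by homogeneous polynomials in the coefficients of $f$, which is precisely what algebraicity of a map $\LL \dashrightarrow \cp{m}$ amounts to. Write $P := z\,\partial_z f$ and $Q := w\,\partial_w f$, so that $\gamma_f = [P:Q]$ on $\ttor$. First I would produce an explicit polynomial $W_f$ cutting out the ramification of $\gamma_f$ along $\cV(f)$: parametrizing the tangent direction to $\cV(f)$ by $(\partial_w f, -\partial_z f)$ and imposing that $P\,dQ - Q\,dP$ vanish along it yields
\begin{equation}\label{eq:Wf}
W_f := P\big(\partial_z Q\,\partial_w f - \partial_w Q\,\partial_z f\big) - Q\big(\partial_z P\,\partial_w f - \partial_w P\,\partial_z f\big).
\end{equation}
This is a Laurent polynomial whose coefficients are homogeneous of degree $2$ in the coefficients of $f$, and by construction $\cV(f)\cap\{W_f = 0\}$ is, scheme-theoretically, the ramification divisor of $\gamma_f$ pushed to $\cV(f)$.

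Next I would encode the branching divisor as a binary form. A point $p$ of the critical scheme $Z_f := \cV(f)\cap\{W_f=0\}$ has image $[P(p):Q(p)] \in \cp{1}$, which is the root of the linear form $Q(p)\,t_0 - P(p)\,t_1$. Hence the branching divisor $B_{\gamma_f}$ is the zero divisor of
\begin{equation}\label{eq:Phif}
\Phi_f(t_0,t_1) := \prod_{p \in Z_f}\big(Q(p)\,t_0 - P(p)\,t_1\big)^{i(p)},
\end{equation}
where $i(p)$ is the local intersection multiplicity of $\cV(f)$ and $\{W_f=0\}$ at $p$; for generic $[f] \in \LL$ this has the constant degree $m$ prescribed by the Riemann-Hurwitz formula \eqref{eq:RHformula} applied to the generic degree $deg(\TG_f) = vol(\Delta)$, see Proposition \ref{deglogGauss}. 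The point $[\Phi_f] \in \cp{m}$ is then exactly $\ly([f])$.

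The algebraicity will follow from elimination theory. The binary form \eqref{eq:Phif} is, up to normalization, the sparse (toric) resultant eliminating the torus variables $(z,w)$ from the three forms $f$, $W_f$ and the moving form $Q\,t_0 - P\,t_1$; resultants depend polynomially on the coefficients of their inputs, so each coefficient of $\Phi_f$ as a form in $(t_0,t_1)$ is a polynomial in the coefficients of $f$ and $W_f$, hence in those of $f$ via \eqref{eq:Wf}. Scaling $f \mapsto \lambda f$ multiplies $P$ and $Q$ by $\lambda$ while leaving $Z_f$ unchanged, so $\Phi_f \mapsto \lambda^m \Phi_f$; thus $[\Phi_f]$ is independent of the representative and the coefficients of $\Phi_f$ are homogeneous of a common degree in the coefficients of $f$. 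This is precisely the assertion that $\ly$ is algebraic on its domain.

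The main obstacle will be the bookkeeping at the toric boundary $\Xd^\infty$ and the control of the degree and of possible spurious factors of the toric resultant. I must verify that over the domain of $\ly$ --- smooth curves meeting $\Xd^\infty$ transversally --- no ramification of $\TG_f$ escapes to the boundary, so that $Z_f \subset \ttor$ already accounts for the full degree $m$ by Proposition \ref{deglogGauss} and Remark \ref{rmk:degree}, and that the resultant of the triple $(f, W_f, Q\,t_0 - P\,t_1)$, taken with respect to their Newton polytopes, produces a binary form of the correct constant degree $m$ with no extraneous factors supported on $\Xd^\infty$. Handling these boundary contributions and confirming the expected degree is where the real work lies; the polynomial dependence on the coefficients of $f$ is then automatic from the resultant formalism.
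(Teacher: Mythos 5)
Your strategy is, in essence, the computational recipe that the paper itself records in Remark \ref{rem:ll}: along $\cV(f)\cap\cV(\gamma_{f,[u:v]})$ your polynomial $W_f$ coincides, up to a nonvanishing factor, with $g=\det\big( grad \, f, grad \, \gamma_{f,[u:v]}\big)$, so your toric resultant of the triple $\big(f,\,W_f,\,Q\,t_0-P\,t_1\big)$ is the resultant $R_\Delta\left( f, \gamma_{f,[u:v]}, g \right)$ in disguise. The gap in your argument is exactly the one that remark warns about, and you acknowledge it without closing it: the assertion that this resultant equals $\Phi_f$ ``up to normalization'' is false. The extraneous factor is not a scalar normalization; it is a nontrivial binary form in $(t_0,t_1)$ with $f$-dependent coefficients, produced by solutions of the eliminated system lying on $\Xd^\infty$. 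This is visible in the worked example of Section \ref{sec:ex}, where for $\Delta=\conv(\left\lbrace 1,z,w,zw,z^2\right\rbrace)$ the resultant equals $\alpha^3\beta\, v^2(u+v)^2(-\alpha+\alpha^2+\beta)\cdot\ly(f)$: the factor $v^2(u+v)^2$ contributes spurious roots at \emph{real} points of $\cp{1}$ (edge directions of $\Delta$), and the factors $\alpha^3\beta(-\alpha+\alpha^2+\beta)$ vanish along hypersurfaces of $\LL$. Hence, to extract $\ly(f)$ from your resultant you must identify these boundary contributions uniformly in $f$ and divide them out, showing the quotient still has coefficients polynomial (or rational) in the coefficients of $f$ and constant degree $m$ in $(t_0,t_1)$; this requires the factorization theory of sparse resultants into facial resultants, and it is precisely the step you defer as ``where the real work lies.'' Without it there is no proof of algebraicity. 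A secondary inaccuracy: it is not true that for every curve in the domain (smooth and transverse to $\Xd^\infty$) all ramification of $\TG_f$ stays in $\ttor$; ramification does occur at points of $\cV(f)^\infty$ on a codimension-one subset of the domain, with branch value the corresponding edge direction of $\Delta$. This is harmless if you shrink to a dense open subset, but as stated it is another unproved, and in fact false, claim.

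For contrast, the paper's proof of Theorem \ref{thm:llalg} avoids the extraneous-factor problem altogether by never introducing a third ``Jacobian'' equation. It considers the algebraic map $\phi(f,[u:v])=R_\Delta\left( f, \gamma_{f,[u:v]}, \, . \, \right)$, viewed as a point of $G(1,d,m)\simeq Sym_d(\cp{m})$, i.e.\ as the zero-cycle of the $d=vol(\Delta)$ intersection points of $\cV(f)$ and $\cV(\gamma_{f,[u:v]})$ in $\Xd$, and defines $\mathcal{G}=\phi^{-1}(\mathcal{H})$, where $\mathcal{H}$ is the discriminant hypersurface of cycles having two colliding points. Tangency of the two curves is thus detected by a collision of points of the cycle, anywhere in $\Xd$, boundary included, so no spurious factors appear, and the bihomogeneous polynomial cutting out $\mathcal{G}$ yields $\ly(f)$ directly as its root divisor in $[u:v]$. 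If you wish to keep your elimination-theoretic route, you must supplement it with the product formula for sparse resultants over the toric boundary in order to split off the extraneous factor explicitly; that is a genuinely different, and substantially harder, argument than the one you have written.
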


In Section \ref{sec:top}, we prove Theorem \ref{thm:upbd} by perturbing union of well chosen lines. To this aim, we study the possibility of extending the Lyashko-Looinjenga mapping to nodal curves. Before stating our next result, we need some extra terminology.

For any $f \in \mathcal{D}$, denote by $I_f^{-}$ the set of binomial cylinders among the connected component of $\tilde{\cV}(f)$ and by $I_f^{+}$ its complement. Recall by Corollary \ref{cor:binom} that $\TG_f$ is constant exactly on components that are in $I_f^{-}$. For any $\mathcal{C} \in I_f^-$, we consider $\TG_f ( \mathcal{C} )$ as a degree $1$ divisor on $\cp{1}$. Finally, for any connected component $\mathcal{C}$ of $\tilde{\cV}(f)$, define $s(\mathcal{C}):= \sum_{p\in \mathcal{C}} p$ such that $\pi(p) \in \cV(f)$ is a singular point.
 
\begin{Theorem}\label{thm:extLL}
The formula 
\begin{equation}\label{eq:extLL}
\ly(f) := \sum_{\mathcal{C} \in I_f^-} ( 3 \cdot deg( s(\mathcal{C}))  -2 ) \cdot \TG_f ( \mathcal{C} ) \hspace{3cm}
\end{equation}
$ \hspace{5cm} \displaystyle + \sum_{\mathcal{C} \in I_f^{+}} (\gamma_{f})_\ast \big( ( R_{\TG_f} )_{|_\mathcal{C}} \big) + 3 \cdot (\TG_f)_\ast ( s(\mathcal{C}) )$\\
provides an algebraic extension of the map $\ly$ to the set of nodal curves $\D$. 
\end{Theorem}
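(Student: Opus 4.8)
The plan is to prove Theorem \ref{thm:extLL} by showing that the proposed formula \eqref{eq:extLL} is the unique continuous (hence algebraic) extension of $\ly$ obtained by taking limits of smooth curves degenerating to a given nodal curve $f \in \D$. Concretely, I would fix a nodal curve $[f_0] \in \D$ and choose a one-parameter family $[f_t] \in \D_0$ of smooth curves with $[f_t] \to [f_0]$ as $t \to 0$, and analyze how the branching divisor $B_{\TG_{f_t}}$ on $\cp{1}$ behaves in the limit. Since $\ly$ is already known to be algebraic on the open dense locus $\D_0$ by Theorem \ref{thm:llalg}, and $\cp{m}$ is compact, the limit exists along any such family; the content is to identify it with \eqref{eq:extLL} and to check the limit is independent of the family, which forces the extension to be well-defined and algebraic.

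The key computation is local, near each node of $\cV(f_0)$ and near each component of the normalization. For a component $\mathcal{C} \in I_f^+$ (non-binomial), the restriction $\TG_{f_0}|_{\mathcal{C}}$ is a genuinely non-constant map, so its own ramification divisor $(R_{\TG_{f_0}})|_{\mathcal{C}}$ contributes to the limiting branching divisor via the pushforward $(\gamma_{f_0})_\ast$; this accounts for the first summand in the $I_f^+$ sum and is essentially continuity of the ordinary Lyashko-Looijenga map away from the nodes. The subtle contribution is the extra $3 \cdot (\TG_{f_0})_\ast(s(\mathcal{C}))$ term: when two smooth branches of $\cV(f_t)$ come together to form a node, the logarithmic Gauss map acquires ramification that concentrates at the node in the limit. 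I would compute the local model of $\gamma_{f_t}$ near a node being smoothed — using a standard normal form $zw = t$ for the smoothing — and count the branch points of $\gamma_{f_t}$ that collide at the image of the node as $t \to 0$. The expectation, matching the coefficient $3$, is that each node absorbs exactly $3$ units of branching at its Gauss image; the factor $3$ should emerge from the local intersection/ramification behavior of the Gauss map at a node, analogous to the classical fact that a node contributes $3$ to the class drop (via the Plücker-type formulas).

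For the binomial components $\mathcal{C} \in I_f^-$, the map $\TG_{f_0}$ is constant at a point $\TG_{f_0}(\mathcal{C}) \in \cp{1}$ by Corollary \ref{cor:binom}, so there is no intrinsic ramification divisor to push forward; instead, all of the limiting branching over the single value $\TG_{f_0}(\mathcal{C})$ comes from the degeneration itself. Here I would again use the local smoothing model at the $deg(s(\mathcal{C}))$ nodes lying on $\mathcal{C}$, together with the global constraint coming from Riemann-Hurwitz \eqref{eq:RHformula} and the degree drop of $\TG$ recorded in Proposition \ref{deglogGauss} and Remark \ref{rmk:degree}. Combining the local node count (giving $3 \cdot deg(s(\mathcal{C}))$) with the correction $-2$ needed so that the total degree of the limiting divisor matches $m$ yields the coefficient $(3 \cdot deg(s(\mathcal{C})) - 2)$ concentrated at the single value $\TG_{f_0}(\mathcal{C})$. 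A bookkeeping step verifies that the total degree of the right-hand side of \eqref{eq:extLL} equals $m$, using that $deg(B_{\TG_{f_t}}) = m$ is constant on $\D_0$.

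The main obstacle I expect is the precise local analysis of the logarithmic Gauss map as a node is smoothed, and in particular rigorously justifying the coefficient $3$ and the $-2$ correction for binomial components rather than merely matching degrees. Unlike the usual Lyashko-Looijenga map, here $\gamma_{f_t}$ depends on the \emph{logarithmic} derivatives $z\partial_z f$ and $w\partial_w f$, so the local normal form near a node must track how these logarithmic weights interact with the vanishing of $f_t$; one must ensure no branch points of $\gamma_{f_t}$ escape to the toric boundary $\Xd^\infty$ or disappear into the node undetected. Establishing independence of the chosen smoothing family — equivalently, that the formula glues to a single algebraic map on all of $\D$ — will likely require showing the extension is continuous and then invoking algebraicity of $\ly$ on $\D_0$ together with properness of the relevant incidence variety, so that the closure of the graph of $\ly$ over $\D_0$ is itself algebraic and projects to the stated formula over $\D$.
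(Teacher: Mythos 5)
Your overall skeleton---degenerate smooth curves $[f_t]\in\D_0$ to $[f_0]\in\D$, analyse the Gauss map in the local model $zw=t$, do Riemann--Hurwitz bookkeeping component by component, then get algebraicity from continuity together with Theorem \ref{thm:llalg}---is the paper's strategy (the paper concludes via the Riemann extension theorem where you invoke a graph-closure argument; either works). The gaps are in the two numerical claims that are the actual content of the formula, and in the one technical ingredient on which both rest.

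First, the coefficient $3$: your Pl\"ucker heuristic is off (a node drops the class by $2$, not $3$), and the correct statement is that $\deg\TG$ drops by $2$ per node while the branching degree drops by $6$ per node, the formula restoring $3$ at \emph{each of the two} preimages $p_1,p_2\in\tilde{\cV}(f_0)$ of a node, whose Gauss images are in general distinct points of $\cp{1}$. The paper's derivation (Lemma \ref{lem:limram}) is a winding-number count: a small loop around a preimage $q$ where $\TG_{f_0}$ has ramification order $k$ maps with winding number $k+1$ around $v=\TG_{f_0}(q)$, the nearby neck boundary maps with winding number $2$, so the annulus between them covers a small disc around $v$ with degree $k+3$, and Riemann--Hurwitz on that annulus ($\chi=0$) yields exactly $k+3$ ramification points converging to $q$. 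Second, the $-2$: you propose to obtain it by matching the total degree $m$, but that is a single linear constraint and cannot pin down the coefficient of each binomial component when $I_f^-$ has several elements. The paper computes it intrinsically: $\mathcal{C}_t^\circ$ is a sphere with $\deg s(\mathcal{C})$ discs removed mapping onto a disc $D\subset\cp{1}$ with degree $2\deg s(\mathcal{C})$ (each boundary circle contributes $2$), so Riemann--Hurwitz gives $\deg (R_{\gamma_{f_t}})_{|_{\mathcal{C}_t^\circ}}=2\deg s(\mathcal{C})-\big(2-\deg s(\mathcal{C})\big)=3\deg s(\mathcal{C})-2$; the $-2$ is the Euler characteristic of the sphere, not a global correction. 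Finally, both counts use the winding number $2$ of the neck boundaries and the fact that no ramification hides in the necks; this is precisely the obstacle you flag (``no branch points of $\gamma_{f_t}$ \ldots disappear into the node undetected'') but do not resolve. The paper resolves it with Proposition \ref{prop:dbcov}: after rescaling by $\vert t\vert^{-1/2}$, the graphs of the Gauss maps converge to those of the model curves $zw=\theta$, and parallel transport along level sets of the Gauss map produces, for each unstable node $p$, a unique family of cylinders $C^p_{f_t}\subset\cV(f_t)$ on which $\gamma_{f_t}$ is an \emph{unramified double cover} of a fixed coherent cylinder $C\subset\cp{1}$. Without this statement (or an equivalent) the cut-and-localize limit argument, and hence both coefficients, remain unjustified.
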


\section{The Lyashko-Looijenga mapping}\label{sec:LL}
Recall that the Lyashko-Looijenga mapping $\ly$ is defined as
\[
\begin{array}{rcl}
\ly : \LL & \dashrightarrow & \cp{m}\\
\left[f\right] & \mapsto & B_{\gamma_f}
\end{array}
\]
on the set of projective classes of Laurent polynomials for which $\cV(f)$ is smooth and intersect $\Xd^\infty$ transversally.

\subsection{Algebraicity of the $\mathcal{LL}$ map}

The goal of this section is to prove Theorem \ref{thm:llalg}. To show the algebraicity of $\ly$, we make use of the nice theory of  resultants \`{a} la \cite{GKZ}.

Let $\Delta\subset \R^2$  be a bounded lattice polygon of dimension $2$ and define $\Xd:=\Xd_\Delta$, $\Li:=\Li_\Delta$. Recall that any Laurent polynomial $f$ with support in $\Delta\cap\Z^2$ can be viewed as a point of the vector space $H^0(\Xd,\Li)$ via the projective embedding $\Xd \hookrightarrow \cp{m}$ induced by $\Delta$. According to \cite[Chapter 3.2.B]{GKZ}, there exists a polynomial $R_\Delta (f_1, f_2, f_3)$ on $\big( H^0(\Xd,\Li)\big)^3$ homogeneous in each factor and  such that
\begin{center}
$ R_{\Delta} (f_1 , f_2 , f_3 ) = 0 $ $\Leftrightarrow$  $ \bigcap_k \cV(f_k)  \subset \Xd$ is non-empty.
\end{center}
\noindent
Now, fix some $ f_1 $ and $ f_2 $ such that $ \cV(f_1) $ and $\cV(f_2)$ intersect transversally in $\Xd$. Let $d:= \vert \cV(f_1) \cap \cV(f_2) \vert$. Then, $ R_\Delta$ regarded as a polynomial in $f_3$ splits into a product of $d$ linear forms. Indeed, $ R_\Delta$ vanishes if and only if $ f_3 $ passes through one of the points of $\cV(f_1) \cap \cV(f_2)$. Each of these points corresponds to a linear form in the dual space, whose product is $R_\Delta$, see \cite[Ch.4, Sec.2]{GKZ}. The space of homogeneous polynomials of degree $d$ on  $ \cp{m} $ that splits into linear forms is an algebraic subvariety of $ \cp{N} $ with $ N=\binom{d}{m+1} $. It parametrizes the space $ G(1,d,m) $ of zero cycles of degree $d$ on $ \cp{m} $ which is also isomorphic to $ Sym_d ( \cp{m} )$, see \cite[Ch.4, theorems 2.2 and 2.12]{GKZ}.

\begin{proof}[Proof of Theorem \ref{thm:llalg}] Consider  the set $ \mathcal{G} \subset \LL \times \cp{1} $ of pairs $\big( f , [u:v] \big) $ such that $[u:v]$ is a branching point of  $\TG_f$. Using a local parametrization of $\cV(f)$, we see that these are exactly the pairs $ \left( f, [u:v] \right) $ for which the two level sets $ \left\lbrace f= 0 \right\rbrace $ and $ \left\lbrace \gamma_{f} = [u:v] \right\rbrace $ do not intersect transversally.
Note that the level set $ \left\lbrace \gamma_{f} = [u:v] \right\rbrace $ can be alternatively defined as the curve $\cV(\gamma_{f,[u:v]})$ where
\[ \gamma_{f,[u:v]} (x,y) = x\dfrac{\partial f}{ \partial x} (x,y) \cdot v - y\dfrac{\partial f}{ \partial y} (x,y) \cdot u. \]
As the support of the latter polynomial is included in $\Delta\cap\Z^2$, the curve $\cV(\gamma_{f,[u:v]})$ can be realized as a hyperplane section in $ \LL$. Thanks to Proposition \ref{deglogGauss}, we know that the curves $\cV(f)$ and $\cV(\gamma_{f,[u:v]})$ always intersect in $vol(\Delta)$ many points, counted with multiplicities. Recall that $\D_0$ denotes the set of smooth curves in $\LL$. Then, we can consider the following algebraic map 
\[ 
\begin{array}{rcl}
				\phi :  \D_0  \times \cp{1}  	& \rightarrow 	& G(1, d, m ) \\
				\big( f, [u:v] \big)			& \mapsto		&   R_\Delta \left( f, \gamma_{f,[u:v]}, \, . \, \right)
\end{array},
\]
where $d:= vol(\Delta)$ and $m:= \vert \Delta\cap\Z^2 \vert$.
This map associates to any $\big( f, [u:v] \big)$ a product of  $d$ linear forms on $\LL$: they correspond to the constraint of passing through one of the $d$ points of $\cV(f) \cap \cV(\gamma_{f,[u:v]})$. Now, observe that $ \mathcal{G} = \phi^{-1} ( \mathcal{H} ) $, where $\mathcal{H}$ is the hypersurface of $Sym_d ( \cp{m} ) \simeq G(1, d, n ) $ corresponding to the $d$-tuples  for which at least two coordinates coincide. It corresponds exactly to the singular locus of $Sym_d ( \cp{m} )  $, see \cite[Ch. 4, Sect. D]{GKZ}. Therefore, $\mathcal{H}$ is algebraic and so is $ \mathcal{G} $. Now, the assumption $\vert \Delta\cap\Z^2\vert\geq 4$ together with the Riemann-Hurwitz formula guarantee that $ \TG_f$ has branching points for any $f$, implying that the projection $\mathcal{G} \rightarrow \D_0$ is surjective. We deduce that $\mathcal{G}$ is an algebraic hypersurface in $\D_0 \times \cp{1}$. It has to be given by a bihomogeneous polynomial $P$ on $ \LL \times \cp{1}$. Now, observe that the coefficients of $P (f, \, . \, )$ are polynomial in the coefficients of $f$ and that its roots give the divisor $\ly(f)$. This proves that $\ly$ is algebraic on the open subset of $\D_0$ of curves intersecting $\Xd^\infty$ transversally.
\end{proof}

\begin{Remark} Up to projective equivalence, we can assume that the coefficients of $\ly(f)$ regarded as polynomials in the coefficients of $f$, has no common factors. In such case, common zeroes of the coefficients of $\ly(f)$ correspond to polynomials $f$ for which $\ly$ is a priori not well defined.
\end{Remark}

\begin{Remark}\label{rem:ll} 
The polynomial $\ly(f)$ can be computed indirectly by computing the resultant $ R_\Delta \left( f, \gamma_{f,[u:v]}, g \right)$ where $$g:= \det \big( grad \, f, grad \, \gamma_{f,[u:v]} \big).$$ Indeed, $g$ vanishes at common root $(z,w) \in \ttor$ of $f$ and $\gamma_{f,[u:v]} $ if the level sets $ \left\lbrace f= 0 \right\rbrace $ and $ \left\lbrace \gamma_{f} = [u:v] \right\rbrace $ do not intersect transversally. Nevertheless, $\ly(f)$ is only a factor of the latter resultant as it might have extra roots corresponding to non-geometric solutions of the system $f=\gamma_{f,[u:v]}=g=0$ on $\Xd^\infty$, see the examples in section \ref{sec:ex}.
\end{Remark}

\subsection{Extension of $\mathcal{LL}$ to nodal curves}

In this section, we prove Theorem \ref{thm:extLL}.
It is not clear a priori that $\ly$ extends to the whole linear system $\LL$ since the degree of $\TG_f$ can drop, for two reasons.

As discussed in Remark \ref{rmk:degree}, $deg(\TG_{f_t})$ drops along a family $\{f_t\}_t$ for which several points of $\cV(f_t)^\infty$ merge together. If $p\in \cV(f_t)^\infty$ and $m_p$ denotes the intersection multiplicity of $\cV(f)$ and $\Xd^\infty$ at $p$, we can check that the formula
\[ B_{\TG_f} + \sum_{p\in \cV(f_t)^\infty} (m_p-1)\cdot \gamma_f(p) \]
provides a continuous extension of $\ly$ to curves with non transversal intersection with $\Xd^\infty$. Hence, we can restrict to curves that are transverse to $\Xd^\infty$ while proving Theorem \ref{thm:extLL}, and this without any loss of generality.

More serious issues happen when $\cV(f)$ becomes singular. Along a degeneration, we can observe that several points of $R_{\gamma_f}$ converge to the singularities. We need to be able to determine the quantity of such point for each new singularity, keep track of their image in $\cp{1}$ and show that their limit does not depend on the degeneration. Below, we proceed for any curves having only double points as singularities.

We now study the behaviour of the logarithmic Gau\ss{} map along 1-parametric families of curves locally isomorphic to
\begin{eqnarray}\label{eq:locmod}
zw=t
\end{eqnarray}
near the origin of $\C^3$.
In what follows, we consider holomorphic maps $F : U \rightarrow \LL $ from an open disc $U\subset \C$, and define $f_t:=F(t)$ for any $t\in U$. Recall by that $\D_{l+k} \subset \overline{\D_l}$ is locally isomorphic to the intersection of $k$ coordinate hyperplanes  $\left\lbrace z_1=0\right\rbrace \cap ... \cap \left\lbrace z_k=0\right\rbrace \subset \C^N$ where $N=dim_\C \,  \D_l$, see e.g \cite[Proposition (2.11)]{Tan}.

\begin{Definition}\label{def:ltrans}
Let $l$ be a strictly positive integer. A map $F : U \rightarrow \LL $ is $l$-transverse if there exist finitely many points $t_1$, ..., $t_n\in U$ and strictly positive integers $k_1$, ..., $k_n$ such that
\begin{itemize}
\item[$\ast$] $F(U\setminus \left\lbrace t_1, ..., t_n \right\rbrace) \subset \D_l$,
\item[$\ast$] $F(t_j)\in \D_{l+k_j}$, $1 \leq j \leq n$,
\item[$\ast$] $F(U)$ intersects transversally each of the $k_j$ smooth hypersurfaces defining $\D_{l+k_j} \subset \D_l$ at $t_j$, $1 \leq j \leq n$.
\end{itemize}
In the sequel, we refer to the $t_j$'s as the special points of $F$. The map $F$ induces a smoothing of $k_j$ of the $l+k_j$ nodes of $\cV(f_{t_j})$. We refer to them as the unstable nodes of $\cV(f_{t_j})$.
\end{Definition}

Consider an $l$-transversal family $F : U \rightarrow \LL $, and choose a special point $t_j$ of $F$ and an unstable node $p \in \cV(f_{t_j})$. It follows from \cite[Proposition 2.1]{ACG2} that $\left\lbrace f_t \right\rbrace_{t \in U}$ is isomorphic to \eqref{eq:locmod} in a neighbourhood of $(p,t_j)$. Recall also that the family $\left\lbrace f_t \right\rbrace_{t \in U}$ gives rise to the vanishing cycles $\delta_{p,t} \subset \cV(f_t)$ defined up to isotopy. In the model \eqref{eq:locmod}, $\delta_{p,t}$ is given by the circle $\{ zw=t \} \cap \{ \vert z \vert = \vert w \vert \}$, see \cite[Chapter X,  \S 9]{ACG2} for more details.

\begin{Definition}\label{def:cylinder}
If $p_1, \, p_2 \in \tilde{\cV}(f_{t_j})$ denotes the two points above $p$ and $v_1, \, v_2 \in \cp{1}$ their respective image by $\TG_{f_{t_j}}$, a cylinder $C \subset  \cp{1}$ is coherent with $p$ if it is given as the complement of two non intersecting open discs containing $v_1$ and $v_2$ respectively.
\end{Definition}

\begin{Proposition}\label{prop:dbcov}
Given an $l$-transversal family $F : U \rightarrow \LL $, a special point $t_j$ of $F$, an unstable node $p \in \cV(f_{t_j})$, and a cylinder $C \subset  \cp{1}$ coherent with $p$, there exist an $\varepsilon >0$ and a continuous family $\left\lbrace C^p_{f_t} \right\rbrace_{0 < \vert t-t_j \vert < \varepsilon}$ of cylinders $C^p_{f_t} \subset \cV(f_t)$ such that
\begin{enumerate}
\item[$i)$] $\gamma_{f_t} : C^p_{f_t} \rightarrow C$ is an unramified double covering,
\item[$ii)$] $C^p_{f_t}$ is homotopic to the vanishing cycle $\delta_{p,t}$ and converges to $p$ as $t$ converges to $t_j$.
\end{enumerate}
Moreover, the family  $\left\lbrace C^p_{f_t} \right\rbrace_{0 < \vert t-t_j \vert < \varepsilon}$ is unique.
\end{Proposition}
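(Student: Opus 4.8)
The plan is to reduce the statement to an explicit local computation of $\TG_{f_t}$ near the node $p$, and then to upgrade the leading-order picture to the full claim by a stability argument. First I would work in logarithmic coordinates: since the unstable node $p$ lies in $\ttor$, I can pick local branches $u=\log z$, $v=\log w$ centred at $p$, in which $z\partial_z=\partial_u$ and $w\partial_w=\partial_v$, so that the logarithmic Gauss map is simply the projectivised gradient $\gamma_{f_t}=[\partial_u f_t:\partial_v f_t]$. By \cite[Proposition 2.1]{ACG2} together with the transversality built into $l$-transversality (Definition \ref{def:ltrans}), after a holomorphic coordinate change $(u,v)=\Phi_t(x,y)$ depending holomorphically on $t$ and a reparametrisation $\tau=\tau(t)$ with a simple zero at $t_j$, the family becomes $\{xy=\tau\}$ near $p$. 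I would also record at the outset that $v_1\neq v_2$: the two branches of a node have distinct ordinary tangent directions, and passing to logarithmic tangents is an invertible diagonal rescaling, so the two logarithmic directions — equivalently $\TG_{f_{t_j}}(p_1)$ and $\TG_{f_{t_j}}(p_2)$ via the conormal correspondence — stay distinct; this is what makes a coherent cylinder (Definition \ref{def:cylinder}) well defined.

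Next I would parametrise the smoothing by $x=s$, $y=\tau/s$ and compute the Gauss map by the chain rule. With $g=xy-\tau$ one has $(\partial_u f_t,\partial_v f_t)=(\partial_x g,\partial_y g)(D\Phi_t)^{-1}=(\tau/s,\,s)(D\Phi_t)^{-1}$. Setting $N:=(D\Phi_t(0))^{-1}$ and $\zeta:=s^2$, the leading-order map is a Möbius transformation in $\zeta$ precomposed with the double cover $s\mapsto\zeta=s^2$; its two omitted values, attained as $\zeta\to 0$ and $\zeta\to\infty$, are the two rows of $N$, namely $v_1$ and $v_2$, and it is nondegenerate for $\tau\neq0$ since its determinant is a nonzero multiple of $\tau\det N$. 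Rescaling $s=\tau^{1/2}\sigma$ renders the leading map independent of $\tau$, of the form $\sigma\mapsto\Psi(\sigma^2)$ for a fixed Möbius transformation $\Psi$, which is an unramified double cover of a fixed annulus onto $\cp{1}\setminus\{v_1,v_2\}$. In particular the vanishing cycle $\{|s|=|\tau|^{1/2}\}=\{|\sigma|=1\}$ sits in its core, which already matches condition $ii)$ and forces $C^p_{f_t}$ to converge to $p$ as $t\to t_j$.

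To reach the precise statement I would not fix the annulus but rather \emph{define} $C^p_{f_t}$ to be the connected component, lying in a fixed small neighbourhood of $p$ and homotopic to $\delta_{p,t}$, of the preimage $\gamma_{f_t}^{-1}(C)$ of the prescribed coherent cylinder $C$. In the rescaled coordinate $\sigma$ the full map $\gamma_{f_t}$ is a $C^1$-small perturbation of the fixed leading model on a fixed closed annulus for $|t-t_j|$ small, because the neglected terms carry positive powers of $s=\tau^{1/2}\sigma$ and of $\tau$. Since the leading model is a proper unramified degree-$2$ cover of the cylinder with nonvanishing derivative on the closed annulus, a standard stability argument — counting preimages by the argument principle (Rouché) and using nonvanishing of $d\gamma_{f_t}$ — shows that for $|t-t_j|<\varepsilon$ the restriction $\gamma_{f_t}:C^p_{f_t}\to C$ remains a proper unramified double cover, giving $i)$; properness then forces surjectivity onto the connected set $C$. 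Uniqueness follows at once from this description: any family satisfying $i)$ and $ii)$ must coincide with the chosen component of $\gamma_{f_t}^{-1}(C)$, which is single and connected because the squaring map pulls back the annular set $\gamma^{-1}(C)$ to a connected annulus in $s$.

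I expect the main obstacle to be the passage from the leading model to the genuine map in such a way that it lands on the \emph{prescribed} cylinder $C$ rather than a nearby one: this requires controlling the higher-order terms of $\Phi_t$ uniformly on the $\tau$-dependent annulus after rescaling, and verifying that the change into logarithmic coordinates — which is exactly what the Gauss map sees — does not disturb the degree count. A secondary point to justify is that the unstable node genuinely lies in $\ttor$, so that logarithmic coordinates exist; this is where the reduction to curves transverse to $\Xd^\infty$ and the genericity of the degeneration are used.
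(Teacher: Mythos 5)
Your plan is correct in its essentials and shares its core with the paper's proof: both work in logarithmic coordinates where $\gamma_{f_t}$ is the projectivised gradient, both reduce to the node model $\{xy=\tau\}$ (the paper keeps the Taylor expansion $zw-t+\text{h.o.t.}$ rather than the Morse normal form, precisely because the Gau\ss{} map is not intrinsic to the abstract curve; your Jacobian factor $(D\Phi_t)^{-1}$ frozen at the origin is the equivalent bookkeeping), both rescale by a square root of the parameter, and both exhibit the true Gau\ss{} map as a small perturbation of the fixed model double cover $\sigma\mapsto\Psi(\sigma^2)$ on a fixed annulus. Where you genuinely diverge is in how the model is upgraded to the actual family and in the uniqueness argument. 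The paper compactifies the family of graphs of the Gau\ss{} maps over the real oriented blow-up of $U$ at $t_j$ and then \emph{parallel transports} $g_\theta^{-1}(C)$ along a connection whose horizontal spaces are cut out by the level sets of the maps $g_t$ (transversality being controlled by the isotropic directions of the Hessian of the defining equation); non-contractibility is proved by a monodromy-of-fibrations argument, and uniqueness by the global degree count of Proposition \ref{deglogGauss} (the degree of $\TG_{f_t}$ can only drop by twice the number of unstable nodes). You instead define $C^p_{f_t}$ directly as a preimage component of $C$, obtain the unramified double covering by a Rouch\'e/argument-principle stability argument, prove $ii)$ by a core-of-the-local-annulus argument, and get uniqueness from the local characterisation. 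Your route avoids the connection/blow-up machinery and yields a pointwise-in-$t$ uniqueness that is arguably cleaner than the degree count (which implicitly assumes distinct families are disjoint); the paper's route buys a construction that is manifestly continuous in $t$ without ever having to say \emph{which} preimage component is meant.

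That last point is the one place your plan needs shoring up: "the connected component of $\gamma_{f_t}^{-1}(C)$ lying in a fixed small neighbourhood of $p$ and homotopic to $\delta_{p,t}$" is not automatically well defined, because on the two branch-like regions of $\cV(f_t)$ between the rescaled annulus and the boundary of a fixed neighbourhood, the Gau\ss{} map may re-enter $C$ and create spurious components, a priori even essential ones. The fix is an estimate of the same kind you already invoke: on the transition region $M\vert\tau\vert^{1/2}\leq\vert x\vert\leq\delta$ one has $\gamma_{f_t}=\left[(y/x,1)(D\Phi_t)^{-1}\right]$ with $\vert y/x\vert\leq 1/M^2$, so the Gau\ss{} map stays within $O(1/M^2)+o_\delta(1)$ of $v_2$ (resp. $v_1$ on the other side), hence off $C$; consequently, for $\delta$ small and $M$ large, the \emph{entire} preimage of $C$ inside $\{\vert x\vert,\vert y\vert\leq\delta\}$ is your single annulus, and both the definition and the uniqueness argument close. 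You should also make explicit that connectivity of the perturbed preimage follows from the deformation invariance of the monodromy of the double cover over the core circle of $C$ (it is nontrivial for the model, hence for the perturbation).
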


The strategy of the proof is to reduce to the following model. Consider the curve 
$$ \mathcal{C}_\theta :=\left\lbrace (z,w) \in \C^2 \, \big| \, zw=\theta \right\rbrace$$ 
 for any unimodular $ \theta \in \C$. For any chosen determination of $\sqrt{\theta}$, consider the parametrization of  $\mathcal{C}_\theta$ by the map $ s \mapsto \big( \sqrt{\theta} s, \sqrt{\theta} s^{-1} \big)$ defined on $\C^\ast$. The Gau\ss{} map on $\mathcal{C}_\theta$ is given by $g_\theta(s) = \big[ \sqrt{\theta} s^{-1}, \sqrt{\theta} s \big]$. Seen through the appropriate affine chart of $\cp{1}$, $g(s)$ is the unramified double covering $s\mapsto s^2$ onto $\C^\ast$.  In particular, for any constant $M>1$, the restriction of $\mathcal{C}_\theta$ to the closed ball $\overline{\mathcal{B}}(0,M)$ is mapped onto $\left\lbrace s \in \C \, \vert \, M^{-2} \leq \vert s \vert \leq M^2 \right\rbrace$ by $g_\theta$.
 
In the proof below, we show that the model $g_\theta : \mathcal{C}_\theta \rightarrow \cp{1}$ arise as the limit of the map $\gamma_{f_t} : \cV(f_t) \rightarrow \cp{1}$ restricted to a neighbourhood of $p$, when $t$ converges to $t_j$ along a radius of $U$. We then construct the cylinder $C^p_{f_t}$ by parallel transporting $g_\theta^{-1}(C)$ when the parameter $t$ travels away from $t_j$ along the specified radius. To this end, we give here some recall on parallel transport. Let $M$ be a fibre bundle of fibre $F$ with base $B$ and let $\pi : M \rightarrow B$ be the projection on the base. A connection $\nabla$ is a distribution of subspace $H_x \subset T_xM$ for any $x \in M$ such that $$H_x \oplus T_x \; \big(\pi^{-1}(\pi(x))\big)=T_xM$$ and $H_x$ depends smoothly on $x$. For any smooth path $\rho : \left[0,1\right] \rightarrow B$ from a point $p$ to a point $q$, the connection $\nabla$ allows to lift the tangent vector field of $\rho$ to a vector field $X_\rho$ in $\cup_{t \in \left[0,1\right]} \pi^{-1} \big( \rho(t) \big) $ everywhere transverse to any fiber $\pi^{-1} \big( \rho(t) \big) $. The parallel transport of a point $x \in \pi^{-1}(p)$ along $\rho$ is the end point in $\pi^{-1}(q)$ of the unique integral curve of $X_\rho$ starting at $x$. When the fiber $F$ is non-compact, that will be our case below, it might happen that the integral curve escape to infinity for a time $t<1$. As we are interested in a local statement, we can always assume that $q$ is close enough to $p$ so that the parallel transport is well defined. The parallel transport of any compact subset $K \subset \pi^{-1}(p)$ along $\rho$ is defined as the union of the parallel transports of every point $x\in K$ in $\pi^{-1}(q)$, provided that $q$ is close enough to $p$. By Cauchy-Lipschitz Theorem, the solutions of a first-order differential equation depend smoothly in the initial conditions. Then the parallel transport of $K$ along $\rho$ is diffeomorphic to $K$ itself. We refer to \cite[Sections 24.1 and 24.2]{DFN} and \cite[Proposition 1.17]{BGV} for more details.

\begin{proof}[Proof of Proposition \ref{prop:dbcov}]
Consider logarithmic coordinates $(z,w)$ in a neighbourhood of $p$ and denote by $h_t:=f_t \circ \exp$. In the $(z,w)$-plane, the logarithmic Gau\ss{} map is the standard Gau\ss{} map, namely the projectivization of the gradient $\nabla h_t$ on $\cV(h_t)$. As linear transformations commute with the Gau\ss{} map, we can assume that $\Log(p)=(0,0)$ and that the two branches 
of $\cV(h_0)$ at the origin are given by the two coordinate axes. After the induced Moebius transformation on $\cp{1}$, the cylinder $C$ is a non-contractible subset of $\cp{1} \setminus \left\lbrace 0, \infty \right\rbrace$. Translate the disc $U$ in $\C$ so that $t_j=0$. Then the $l$-transversality of $\left\lbrace f_t \right\rbrace_{t \in U}$ ensures that the monomial $t$ appears with a non zero coefficient in the Taylor expansion of $h_t$. Dividing $h_t$ by a constant and applying a linear rescaling to  $t$,  $h_t$ admits the following Taylor expansion
\begin{eqnarray}\label{eq:taylor}
 h_t(z,w)	=  zw - t + \displaystyle \sum_{a + b + 2 c > 2} \alpha_{(a, b, c)} z^a w^b t^c
\end{eqnarray} 
for some $\alpha_{(a, b, c)} \in \C$. Consider now the change of coordinates $\varphi_t (z,w):=\vert t \vert^{-\frac{1}{2}} (z,  w)$ and denote by $\cV_t := \varphi_t \big( \cV(h_t)\big)$. Notice that $\varphi_t$ preserves the Gau\ss{} map. After applying the change of coordinates $\varphi_t$ in \eqref{eq:taylor} and dividing by $\vert t \vert$, we deduce that the curve $\cV_t$ is the vanishing locus of a function of the form
\begin{eqnarray}\label{eq:gauss1}
 zw - t/\vert t \vert  + \vert t \vert^{\frac{1}{2}} \cdot R_t(z,w)  
\end{eqnarray} 
where the remainder $R_t(z,w)$ is analytic and depends continuously on $t$. The Gau\ss{} map on $\cV_t$ is then given by 
\begin{eqnarray}\label{eq:gauss2}
 g_t(z,w) = \left[ w + \vert t \vert^{\frac{1}{2}} \cdot \partial_z R_t(z,w), z + \vert t \vert^{\frac{1}{2}} \cdot \partial_w R_t(z,w) \right].
\end{eqnarray} 
Denote by $\mathcal{G}_t$  the graph of $g_t$ over $\cV_t$ and by $\mathcal{G}_\theta$ the graph of $g_\theta$ over $\mathcal{C}_\theta$. As a consequence of \eqref{eq:gauss1} and \eqref{eq:gauss2}, the restriction of $\mathcal{G}_t$ to any compact neighbourhood $K\subset \C^2$ of the origin  converges in Hausdorff distance to the restriction of $\mathcal{G}_\theta$ to $K$ when $t$ converges to zero and $t/\vert t \vert$ converges to $\theta$. We conclude that the continuous family of holomorphic curves $\left\lbrace \mathcal{G}_t \cap K \right\rbrace_{t \neq 0}$   can be compactified over the real oriented blow up $U_\circ$ of $U$ at $0$ by the family $\left\lbrace \mathcal{G}_\theta \cap K\right\rbrace_{\vert \theta \vert =1}$. Fix therefore $K$ such that the Gau\ss{} map $g_\theta$ on the curve $\mathcal{C}_\theta \cap K$ surjects on the cylinder $C$ for any unimodular complex number $\theta$. As we pointed out before the proof, $g_\theta^{-1}(C)$ is a cylinder that is  a double cover of $C$. Let us show that for some $\varepsilon>0$, we can parallel transport $g_\theta^{-1}(C)$ to $\cV_t \cap K$ along the level sets of $g_t$ for any $\vert t \vert < \varepsilon$. The fibre bundle that we consider is 
$$ M:= \bigcup_{t \in U_\circ, \, \vert t \vert < \varepsilon} \cV_t \cap K \; \longrightarrow \; \left\lbrace t \in U_\circ, \, \vert t \vert < \varepsilon \right\rbrace$$
where $\varepsilon$ will be fixed latter. 
The connection $\nabla$ at a point $x \in \cV_t \subset M$ such that $g_t(x)=v$ is defined as the intersection of the tangent spaces of $\cV_t$ and the level set $\left\lbrace (z,w,t) \in K \times U_\circ \, \vert \, g_t(z,w)=v \right\rbrace$ at $x$. For $\nabla$ to be well defined, this intersection has to be transverse and the resulting line in $T_x \big( K \times U_\circ \big)$ has to have a non-zero coordinate in $t$. 
The last requirement is guaranteed by the fact that $\cV_t$ is smooth for any $t$. For the first requirement, it is enough to show that $\cV_t$ and the level set $\left\lbrace (z,w) \in K \, \vert \, g_t(z,w)=v \right\rbrace$ intersect transversally in $K$. This can be guaranteed for any $v\in C$ and any $t$ such that $\vert t \vert < \varepsilon$, for a small enough $\varepsilon>0$. Indeed, $\left\lbrace (z,w) \in K \, \vert \, g_t(z,w)=v \right\rbrace$ and $\cV_t$ intersect transversally if $v\in \cp{1}$ is not an isotropic direction for the Hessian of \eqref{eq:gauss1} 
\begin{eqnarray}\label{eq:gauss3} 
\begin{pmatrix}0 & 1 \\ 1 & 0 \end{pmatrix} + \vert t \vert^{\frac{1}{2}} \cdot Hess \, R_t(z,w). 
\end{eqnarray} 
We can take $\varepsilon$ small enough so that the isotropic direction of \eqref{eq:gauss3} are close to the one of the matrix on the left, namely $0$ and $\infty$. Hence, these isotropic directions lie outside of $C$. Therefore we can define the parallel transport $C_t$ of $g_\theta^{-1}(C)$ to $\cV_t$ along the radius $\left\lbrace s \in U_\circ \, \vert \, arg(s)=\theta \right\rbrace$. We might have to consider a smaller $\varepsilon$ in order for $C_t$ to stay in $K$, for any $\theta \in S^1$ and $t$ such that $\vert t \vert < \varepsilon$ and $arg(t)=\theta$. As $C_t$ is given by the parallel transport of the cylinder $g_\theta^{-1}(C)$ along the level set of $g_t$ and $g_\theta$ is a double covering, it follows that $C_t$ is a cylinder, that $C_t=g_t^{-1}(C)$ and finally that $g_t : C_t \rightarrow C$ is a double covering. Taking $C^p_{f_t} \subset \cV(f_t)$ to be the preimage of $C_t$ by $\varphi_t \circ \Log$ proves $i)$.

The fact that $C^p_{f_t}$ converges to $p$ for $t \to 0$ is clear since $\Log C^p_{f_t} \subset \vert t \vert^{\frac{1}{2}}\cdot K$. $C^p_{f_t}$ is either contractible or homotopy equivalent to a vanishing cycle. In order to prove $ii)$, it remains to prove that $C^p_{f_t}$ is not contractible in $\cV(f_t)$. On one side, the fibration $\left\lbrace C^p_{f_t} \right\rbrace_{t\in \rho}$ over any non trivial loop $ \rho \in U\setminus \{0\}$  is homeomorphic to the non trivial fibration $\left\lbrace C_\theta \right\rbrace_{\vert \theta \vert =1}$, see \cite[Chapter X, \S 9]{ACG2}. Assume that $C^p_{f_t}$ is contractible for some $t$, then it is for any $0<\vert t \vert < \varepsilon$ and the fibration $\left\lbrace C^p_{f_t} \right\rbrace_{t\in \rho}$ over any loop $\rho$ is trivial. This is a contradiction and proves $ii)$.

The uniqueness of the family of cylinders $C^p_{f_t}$ follows from the point $ii)$ and Proposition \ref{deglogGauss}. As $C^p_{f_t}$ is a double covering of $C$ and as $C^p_{f_t}$ vanishes if $t \to 0$ by $i)$ and $ii)$, the degree of $\TG_{f_t}$ drops by twice the number of such families of cylinders when $t$ goes to 0. By Proposition \ref{deglogGauss}, it should only decrease by twice the number of unstable nodes of  $\cV(f_{t_j})$. It implies that there is only one such family of cylinders $C^p_{f_t}$ for any unstable node $p \in \cV(f_{t_j})$.
\end{proof}
\noindent

The next step in the proof of Theorem \ref{thm:extLL} is the study of the limit of the ramification divisor of $\TG_{f_t}$ along continuous paths $\left\lbrace f_t \right\rbrace_{t>0} \subset \D_l$ that converge to a fixed $f_0 \in \D_{l+k}$, for strictly positive integers $l$ and $k$. The strategy is to cut out $\cV(f_t)$ into appropriate pieces by removing the cylinders we have constructed in the previous proposition for all the unstable nodes of $\cV(f_0)$. As any such cylinder contains no ramification point, we can study the limit of the ramification divisor of $\TG_{f_t}$ on each connected component of the complement of these cylinders in $\cV(f_t)$. In order to make sense of these cylinders, we will construct a foliation of $\overline{\D_l}$ by $l$-transverse discs in a neighbourhood of $f_0$.

We proceed as follows. Let $N=dim_\C \, \D_l$. Choose an open neighbourhood $\mathcal{B}_1 \subset \overline{\D_l}$ of $f_0$ together with holomorphic coordinates $(z_1$, ..., $z_N)$ centred at $f_0$ such that $\mathcal{B}_1$ is the unit ball in $\C^N$ and $\D_{l+k} \cap \mathcal{B}_1 = \left\lbrace z_1=0\right\rbrace \cap ... \cap \left\lbrace z_k=0\right\rbrace $. Therefore any disc in $\mathcal{B}_1$ given as the intersection of a line of slope $(1,...,1)$ with $\mathcal{B}_1$ is $l$-transverse. All such discs are parametrized by the hyperplane $H:= \left\lbrace z_1+...+z_N = 0 \right\rbrace$. Denote by $U_z$ the $l$-transverse disc associated to $z\in H$. Note first that there exists a poly-disc  $V\subset H$ centred at the origin for which the disc $U_z$ intersects all the hyperplanes $\left\lbrace z_j=0\right\rbrace$, for any $1\leq j \leq k$ and $z \in V$.  Hence, for any $z\in V$,  the disc $U_z$ has $k$ unstable nodes distributed among $n\leq k$ special points all merging to $0\in U_0$ when $z$ approaches $0\in H$, see Definition \ref{def:ltrans}. Define now  the open set $ \mathcal{B}_2:= \cup_{z \in V} U_z \subset \mathcal{B}_1$. For any $f\in \mathcal{B}_2$ denote by $z(f)$ the projection of $f$ on $V$,  by $t_{j,z} \in U_z \cap \{z_i=0\}$ and by $p_{j,z}$ the unstable node at the special point $t_{j,z}$ corresponding to $\left\lbrace z_j=0\right\rbrace$ (it may happen that $t_{j,z}=t_{l,z}$ for $j\neq l$). Finally, denote by $U_{j,z} \subset U_z$ the disc $\left\lbrace \vert t-t_{j,z}\vert < \varepsilon \right\rbrace$ where $\varepsilon$ is given by Proposition \ref{prop:dbcov}. Now, for a chosen unstable node of $U_z$, the parameter $\varepsilon$ of the Proposition \ref{prop:dbcov} varies continuously in $z$. As $0$ is the unique special point of $U_0$, it implies that there is an open neighbourhood $\mathcal{B}_3 \subset \mathcal{B}_2$ of $f_0$ such that 
\[  \bigcap_{1\leq j \leq k} U_{j,z(f)} \neq \emptyset \]
for any $f\in\mathcal{B}_3$. By Proposition \ref{prop:dbcov}, we can define the cylinders $C^{p_{j,z(f)}}_f$ for any $f\in\mathcal{B}_3\cap \D_l$ and $1\leq j \leq k$.

We now consider a continuous paths $\left\lbrace f_t \right\rbrace_{t>0} \subset \D_l$ converging to $f_0 \in \D_{l+k}$. The above discussion allows us to define
\[ \cV(f_t)^\circ := \cV(f_t) \setminus  \; \bigcup_{1\leq j \leq k}  C^{p_{j,z(f_t)}}_{f_t} \]
for any $\vert t \vert < \delta$ and some $\delta<0$. By the point $ii)$ of Proposition \ref{prop:dbcov},  the connected component of $\cV(f_t)^\circ$ and the one of $\tilde{\cV}(f_0)$ are in bijective correspondence via the path $\left\lbrace f_t \right\rbrace_{t>0}$. For any connected component $\mathcal{C} \subset \tilde{\cV}(f_0)$, denote by $\mathcal{C}^\circ_t$ the corresponding connected component of $\cV(f_t)^\circ$. Recall finally that the divisor $s(\mathcal{C}) $ on $\mathcal{C}$ is the sum of the point of $C$ that project down to a double point of $\cV(f_0)$. 

\begin{Lemma}\label{lem:limram}
For any connected component $\mathcal{C} \subset \tilde{\cV}(f_0)$, either $\gamma_{f_0}$ is constant on $\mathcal{C}$ and  then 
\[deg ( R_{\gamma_{f_t}} )_{|_{\mathcal{C}_t^\circ}} = 3 \cdot deg(s(\mathcal{C})) -2, \]
or $\gamma_{f_0}$ is not constant  on $\mathcal{C}$ and we have 
\[ \lim_{t \rightarrow \infty} ( R_{\gamma_{f_t}} )_{|_{\mathcal{C}_t^\circ}} = ( R_{\gamma_{f_0}} )_{|_\mathcal{C}} + 3 \cdot s(\mathcal{C}). \]
\end{Lemma}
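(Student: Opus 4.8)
The plan is to identify the zeros of the ramification divisor of $\gamma_{f_t}$ on $\mathcal{C}^\circ_t$ by an explicit equation, split them into a bulk part and a part concentrated at the preimages of the unstable nodes, and then treat the two cases of the statement by two different global arguments. I work in the logarithmic coordinates $(z,w)$ of the proof of Proposition \ref{prop:dbcov}, so that $h_t=f_t\circ\exp$ and $\gamma_{f_t}$ is the projectivised gradient of $h_t$; then the ramification locus of $\gamma_{f_t}$ is the zero set, on $\{h_t=0\}$, of the affine Hessian form
\[ Q_t:=\partial_{zz}h_t\,(\partial_w h_t)^2-2\,\partial_{zw}h_t\,\partial_z h_t\,\partial_w h_t+\partial_{ww}h_t\,(\partial_z h_t)^2. \]
By Proposition \ref{prop:dbcov} each removed cylinder is an unramified double cover of a cylinder in $\cp{1}$, hence carries no zero of $Q_t$; so every ramification point near an unstable node already lies in $\mathcal{C}^\circ_t$ and I only need to locate them.

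For the bulk, assume first that $\gamma_{f_0}$ is not constant on $\mathcal{C}$. Away from the unstable nodes and from $\Xd^\infty$ the maps $\gamma_{f_t}$ converge to $\gamma_{f_0}$ uniformly on compacta, so $Q_t\to Q_0$ there. Applying the argument principle to $Q_t|_{\{h_t=0\}}$ on a compact exhaustion of $\mathcal{C}$ with small discs around $s(\mathcal{C})$ deleted, I obtain that the ramification points of $\gamma_{f_t}$ converge with multiplicity to those of $\gamma_{f_0}$; this produces the summand $(R_{\gamma_{f_0}})_{|\mathcal{C}}$ and shows nothing else survives in the bulk.

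The heart of the matter is the local count at each branch $p_i$ of an unstable node. Here the naive model $zw=t$ is useless, since on it $Q_t=-2zw\equiv -2t$; the count is dictated by the subleading Taylor data \eqref{eq:taylor}. I would run a scaling analysis: on the branch tangent to $\{z=0\}$ substitute $z=t^{2/3}Z$, $w=t^{1/3}W$. Then $\{h_t=0\}$ becomes $ZW-1+\alpha_{(0,3,0)}W^3+o(1)=0$, while a direct expansion gives $Q_t=-2t\,WZ+o(t)$ — crucially the cubic contributions to $Q_t$ cancel. Hence the leading ramification equation is $WZ=0$, i.e. $1-\alpha_{(0,3,0)}W^3=0$ on the curve, a cubic with three roots; so three ramification points collapse onto $p_i$, and likewise at the other branch after exchanging $z$ and $w$. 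To pin this down I invoke Proposition \ref{deglogGauss} together with Riemann--Hurwitz \eqref{eq:RHformula}: smoothing one node raises the geometric genus by one, so $\chi(\tilde{\cV})$ drops by $2$ and $deg(\TG)$ rises by $2$, whence $deg(R)=2\,deg(\TG)-\chi$ rises by exactly $6$. Thus precisely $6$ extra ramification points appear per smoothed node, forcing the equidistribution $3+3$ and giving $\lim_{t\to 0}(R_{\gamma_{f_t}})_{|\mathcal{C}^\circ_t}=(R_{\gamma_{f_0}})_{|\mathcal{C}}+3\,s(\mathcal{C})$. The hard part is precisely this local step: proving the count is robustly three for \emph{every} admissible subleading data — when $\alpha_{(0,3,0)}$ vanishes the three points simply migrate to a finer scale — which I would settle by a Newton-polygon bookkeeping of $Q_t|_{\{h_t=0\}}$ in $(z,w,t)$, using the global bound above to force exactness.

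The constant case is cleaner and purely topological, and I would treat it separately by Riemann--Hurwitz over a disc. If $\gamma_{f_0}$ is constant on $\mathcal{C}$, then by Corollary \ref{cor:binom} $\mathcal{C}\cong\cp{1}$ is a binomial cylinder, and for small $t$ the image $\gamma_{f_t}(\mathcal{C}^\circ_t)$ lies in a small disc $D$ around the common value $\gamma_{f_0}(\mathcal{C})$. Writing $\sigma:=deg\,s(\mathcal{C})$, deleting the $\sigma$ neck-discs gives $\chi(\mathcal{C}^\circ_t)=\chi(\cp{1})-\sigma=2-\sigma$, while each neck boundary double covers $\partial D$ by Proposition \ref{prop:dbcov}, so $\gamma_{f_t}\colon\mathcal{C}^\circ_t\to D$ is a proper branched cover of degree $2\sigma$. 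Riemann--Hurwitz over the disc ($\chi(D)=1$) then yields $deg\,(R_{\gamma_{f_t}})_{|\mathcal{C}^\circ_t}=2\sigma-(2-\sigma)=3\sigma-2$, the claimed value, the term $-2$ being exactly $-\chi(\cp{1})$.
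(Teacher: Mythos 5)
Your constant case is correct and coincides with the paper's own argument (same Euler characteristic count over the disc $D$, same use of Proposition \ref{prop:dbcov} to get boundary degree $2\sigma$), and your bulk convergence in the non-constant case is fine. The gap is exactly where you locate the ``hard part'': the local count of $3$ at each branch of an unstable node, and neither of your remedies closes it. The scaling $z=t^{2/3}Z$, $w=t^{1/3}W$ yields three roots of $1-\alpha_{(0,3,0)}W^3=0$ only when $\alpha_{(0,3,0)}\neq 0$, which is precisely the case where $\gamma_{f_0}$ is unramified at the branch point $q$; when $R_{\gamma_{f_0}}(q)=k>0$ the correct count is $k+3$ and the analysis at that single scale misses it. More seriously, the global Riemann--Hurwitz count cannot ``force exactness'': it only controls the total excess summed over all branches of all unstable nodes, and says nothing about how this excess distributes between the two branches of one node --- which may even lie on different connected components, while the lemma is a per-component statement. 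To conclude you would need a matching per-branch lower bound $n_q\geq k_q+3$, and that is exactly what is missing. Worse, the count of ``$6$ per node'' is simply false once $\cV(f_0)$ has a binomial-cylinder component, a case the lemma must cover since both alternatives of the statement can occur on the same curve: Proposition \ref{deglogGauss} and formula \eqref{eq:RHformula} do not apply to a component on which $\TG_{f_0}$ is constant. For the curve $f=(\alpha z+1)\big(1+(1-\alpha)z+w\big)$ of the last example in Section \ref{sec:ex} (a line and a binomial cylinder meeting at one node), the total excess is $4$, not $6$, and it splits as $3$ on the line side and $3\cdot 1-2=1$ on the cylinder side: there is no equidistribution.

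The paper pins down the local count by a winding-number argument that uses data you already have in hand. Fix $q\in s(\mathcal{C})$ with $R_{\gamma_{f_0}}(q)=k$, so $\TG_{f_0}$ is locally $z\mapsto z^{k+1}$ and a small loop $\rho$ around $q$ maps to a loop winding $k+1$ times around $v=\TG_{f_0}(q)$; for small $t$ its deformation $\rho_t\subset\mathcal{C}_t^\circ$ winds the same way. By Proposition \ref{prop:dbcov}$(i)$, the neck boundary circle of $\mathcal{C}_t^\circ$ at $q$ winds exactly twice around $v$. The annulus in $\mathcal{C}_t^\circ$ bounded by $\rho_t$ and this neck circle therefore covers a small disc around $v$ with degree $(k+1)+2=k+3$, and since the annulus has zero Euler characteristic, Riemann--Hurwitz \eqref{eq:RHformula} forces exactly $k+3$ ramification points inside it. This gives the per-branch count uniformly in the Taylor data, with no genericity assumption and no Newton-polygon bookkeeping; note that the double-covering property of the necks is used positively here (to compute a boundary winding), not merely negatively (to exclude ramification from the necks) as in your proposal.
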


\begin{proof}[Proof of Lemma \ref{lem:limram}]
In the first case, $\mathcal{C}$ has to be the compactification of a binomial cylinder that is a topological sphere, see Corollary \ref{cor:binom}. It implies that $\mathcal{C}_t^\circ$ is a sphere with $deg(s(\mathcal{C}))$ many discs removed. Let $\left\lbrace v \right\rbrace \subset \cp{1}$ be the image of $\mathcal{C}$ by $\gamma_{f_0}$. When defining $\mathcal{C}_t^\circ$ via Proposition \ref{prop:dbcov}, we choose the cylinder $C\subset \cp{1}$  coherent with $p$ for each point $p \in s(\mathcal{C})$ so that they all share the same boundary component, let us say $\partial$, around $v$.
Define $D \subset \cp{1}$ to be the disc bounded by $\partial$ that contains $v$. As $\gamma_{f_0}$ is constant on $\C$, the map $\gamma_{f_t}  :\mathcal{C}_t^\circ \rightarrow \cp{1}$ has degree 0 and $\gamma_{f_t} (\partial \mathcal{C}_t^\circ) = \partial$. Hence, $\gamma_{f_t} (\mathcal{C}_t^\circ)=D$. The degree of $\gamma_{f_t}  :\mathcal{C}_t^\circ \rightarrow D$ can be computed by the contribution on its boundary. By Proposition \ref{prop:dbcov}, each boundary component of the source contributes by two. The Riemann-Hurwitz formula \eqref{eq:RHformula} yields
\[
\begin{array}{rcl}
deg ( R_{\gamma_{f_t}} )_{|_{\mathcal{C}_t^\circ}} &=& deg ( \gamma_{f_t} : \mathcal{C}_t^\circ \rightarrow D ) - \chi ( \mathcal{C}_t^\circ )\\
& & \\
& = & 2 \cdot deg(s(\mathcal{C}) ) - ( 2 - deg(s(\mathcal{C}) ) ),
\end{array}
\]
implying the first part of the statement.

For the second case, let us first investigate the limit of the ramification divisor at a point $q \in s(\mathcal{C})$. Let $k \geq 0$ be the order of the divisor $ R_{\TG_{f_0}} $ at $q$. Then, $\TG_{f_0}$ is locally given by $z \mapsto z^{k+1}$ around $q$ and the image of any sufficiently small loop $\rho$ around $q$ winds ($k+1$)-times around $v \in \cp{1}$. Any small deformation $\rho_t$ of $\rho$ in $\mathcal{C}_t^\circ$ (for small $t$) has the same winding property, whereas the boundary circle of $\mathcal{C}_t^\circ$ corresponding to $q$ winds twice around $v$ via $\gamma_{f_0}$, by $i)$ in Proposition \ref{prop:dbcov}. These two loops bound a cylinder in  $\mathcal{C}_t^\circ$ that covers $(k+3)$-times a small disc around $v\in \cp{1}$. The Riemann-Hurwitz formula \ref{eq:RHformula} implies that this cylinder contains $(k+3)$ ramification points. It implies that
\begin{equation}
\label{eq:+3}
 \left( \lim_{t\rightarrow0} R_{\gamma_{f_t}} \right) (q) = R_{\gamma_{f_t}} (q) +3.
\end{equation}
Let us now investigate the limit of the ramification divisor away from the support of $s(\mathcal{C})$. For each point $q \in s(\mathcal{C})$ consider a small ball $\mathcal{B}_q \subset \ttor $ that contains no ramification point of $\TG_{f_0}$ except possibly $q$ itself. Denote by $\mathcal{C}^{\circ \circ}_t := \mathcal{C}^\circ_t \setminus \cup_{q\in s(\mathcal{C})} \mathcal{B}_q$ (respectively $\mathcal{C}^{\circ \circ} := \mathcal{C} \setminus \cup_{q\in s(\mathcal{C})} \mathcal{B}_q$). For $t$ small enough, no ramification point of $\gamma_{f_t}$ meets $\partial \mathcal{B}_q$ for all $q\in s(\mathcal{C})$. For such $t$'s the degree of $R_{\gamma_{f_t}}$ restricted to $\mathcal{C}^{\circ \circ}_t$ does not drop when $t$ approaches 0. As the family of holomorphic maps $\gamma_{f_t} : \mathcal{C}^{\circ \circ}_t \rightarrow \cp{1} $ is continuous in $t$, it implies that
\[ \lim_{t \rightarrow 0} ( R_{\gamma_{f_t}} )_{|_{\mathcal{C}^{\circ \circ}_t}} = ( R_{\gamma_{f_0}} )_{|_{\mathcal{C}^{\circ\circ}}}.\]
Together with (\ref{eq:+3}), the latter implies the result.
\end{proof}

\begin{proof}[Proof of Theorem \ref{thm:extLL}.] 
 Recall our candidate extension for the $\ly$ map, namely 
$$
\ly(f) := \sum_{\mathcal{C} \in I_f^-} ( 3 \cdot deg( s(\mathcal{C}))  -2 ) \cdot \TG_f ( \mathcal{C} ) \hspace{3cm}
$$
$ \hspace{5cm} \displaystyle + \sum_{\mathcal{C} \in I_f^{+}} (\gamma_{f})_\ast \big( ( R_{\TG_f} )_{|_\mathcal{C}} \big) + 3 \cdot (\TG_f)_\ast ( s(\mathcal{C}) ).$\\
First, notice that $\ly(f)$ coincide with the original definition when $\cV(f)$ is smooth. Indeed, $I^-_f$ is empty whereas $I_f^{+}$ consists only of $\cV(f)$ itself, and $s(\cV(f))$ is empty as well. In such case, $\ly(f)$ is simply the push-forward of the ramifiaction divisor, namely the branching divisor of $\gamma_f$. According to this definition, the restriction of $\ly(f)$ to a fixed stratum $\D_l$ is continuous. Indeed, the term $\TG_f ( \mathcal{C} ) $ is constant and both $( R_{\TG_f} )_{|_\mathcal{C}}$ and $(\TG_f)_\ast ( s(\mathcal{C}) )$ depend algebraically on $f$. Now, we need to show continuity while passing from the $l$-th stratum to the $(l+k)$-th stratum for any $k>0$. For this purpose, consider a continuous path $\left\lbrace f_t \right\rbrace_{0<t\leq 1} \subset  \D_l $ converging to $f_0 \in \D_{l+k}$. We can apply Lemma \ref{lem:limram} and pass to the limit $t \to 0$ in the formula \eqref{eq:extLL}. The binomial cylinders of $I_{f_t}^-$ cannot decompose in several irreducible components when t approaches zero as they are defined by binomials of the form $z^aw^b-c$ with $a$ and $b$ coprime. In particular they cannot acquire additional double points. Their image by the logarithmic Gau\ss{} map is given by a rational vector independent of $t$, see Corollary \ref{cor:binom}. Hence, the first sum is constant. The limit of the second sum is explicitly given by Lemma \ref{lem:limram}. Combining the two summands, the result is $\ly(f_0)$. As the formula depends neither on $l$ nor on  $ \left\lbrace f_t \right\rbrace_{0<t\leq 1} $ but only on $f_0$, it induces a continuous extension of $\ly$ to $\D$. By Theorem \ref{thm:llalg} and the Riemann Extension Theorem, see \cite[Chapter 1,Section C]{GR}, the latter extension is algebraic.
\end{proof}

\section{Semi-algebraicity of the discriminantal set}\label{sec:sa}

\begin{proof}[Proof of Theorem \ref{thm:sa}] According to Proposition \ref{prop:S(f)}, a point $[f] \in \D_0$ belongs to $\cS_\Delta$ if and only if the univariate polynomial $\ly(f)$ on $\cp{1}$ has at least one real root. Let $m$ be the degree of $B_{\TG_f}$ or equivalently the degree of $\ly(f)$. Then, $\ly(f)$ is a point of $\cp{m} \simeq Sym_m(\cp{1})$. Now, we show that the space $\P$ of polynomials of degree $m$ with at least one real root is a semi-algebraic hypersurface in $\cp{m}$. Indeed, the intersection of $\P$ with the subspace $\left\lbrace (z_1,...,z_m) \in \C^m \big| \; z_i \neq z_j, \, \forall \,  i\neq j \right\rbrace \subset Sym_m(\cp{1})$  of polynomials with pairwise distinct roots is given by by the union of real hyperplanes
\[ \displaystyle \bigcup_{1\leq k \leq m} \left\lbrace (z_1,...,z_m) \in \C^m \big| \; z_i \neq z_j \text{ and } Im(z_k)=0\right\rbrace. \]
As $\P$ is the closure of the latter inside $\cp{m}$, it implies that it is a semi-algebraic hypersurface obtained as the closure of its codimension 1 stratum. By Proposition \ref{prop:S(f)} and Theorem \ref{thm:llalg}, $\cS_\Delta = \ly^{-1} ( \P ) \cap \D_0$ is a semi-algebraic subvariety of $\D_0$. It remains to show that $\cS_\Delta$ is neither empty nor the whole $\D_0$. First, let us show that $\cS_\Delta$ is not empty. As $\vert \Delta\cap\Z^2\vert \geq 4$, we can find a tropical curve $C$ of degree $\Delta$ with at least one bounded edge. Choosing a non empty subset of twisted edges inside the set of bounded edges of $C$, we can construct by tropical approximation a smooth real algebraic curves in $\LL$ with logarithmic inflection points on its real locus, see Lemma 2.11 in \cite{L2}. In particular, it belongs to $\cS_\Delta$. Now the existence of simple Harnack curves for any $\Delta$ \cite[Corollary A4]{Mikh} ensures that $\cS_\Delta$ is not the whole space.
\end{proof}

\section{The topology of $S(f)$}\label{sec:top}

In this section, we investigate the topology of the pair $(\cV(f), S(f))$. In particular, we discuss the sharpness of the bounds given in \eqref{eq:trivbd}. First, we prove Proposition \ref{prop:S(f)} describing the topology of $S(f)$ solely.

\begin{proof}[Proof of Proposition \ref{prop:S(f)}]
$S(f)$ is a possibly ramified covering of $\rp{1}$. In the unramified case, $S(f)$ is a compact covering of a circle, then a disjoint union of circles. As $\TG$ is a diffeomorphism except at the ramification points, smoothness is clear. In the ramified case, we can choose holomorphic coordinates such that, around a ramification point, $S(f)$ is the preimage of $\R$ around $0$ by the map  $ z \mapsto z^n $ for some $ n > 1 $. It implies that we can always pull-back $S(f)$ to the real projective tangent bundle over $\tilde{\cV}(f)$ inducing a compact covering of $\rp{1}$ via the tangent map of $\TG$. This pull-back is a union of smoothly embedded circles in this bundle. It projects down to a union of smoothly embedded circles intersecting transversally on $\tilde{\cV}(f)$.
\end{proof}

 We now come to the proof Theorem \ref{thm:upbd}. Recall that for a smooth projective curve of degree $d$ having transversal intersection with any of the three coordinates axes of $\cp{2}$, the degree of the its logarithmic Gau\ss{} map is $d^2$, according to Proposition \ref{deglogGauss}.

\begin{Definition}\label{def:sigma}
Let $[f] \in \LL$ and $p \in \cV(f)$ be a double point. Denote by $p_1, \, p_2 \in \tilde{\cV}(f)$ the two points projecting to $p$ and denote by $v_1, \, v_2 \in \cp{1}$ their respective image by $\TG_f$. Define $\sigma_f(p) = 1$ (respectively $-1$) if $v_1$ and $v_2$ are in the same connected component of  $\cp{1} \setminus \rp{1}$ (respectively different connected components). Define $\sigma_f(p) = 0$ whenever $v_1$ or $v_2$ sits in $\rp{1}$. When the context is clear, we will simply write $\sigma(p)$.
\end{Definition}

\begin{Proposition}\label{prop:critnod}
Let $[f] \in \D$ such that $S(f)$ is smooth and contains no double point of $\cV(f)$. Denote by $n_+$ (respectively $n_-$) the number of double points $p$ of $\cV(f)$ for which $\sigma_f(p)=1$  (respectively $\sigma_f(p)=-1$). Then, there exists a neighbourhood $\mathcal{U} \subset \LL$ of $[f]$ such that for any $[h] \in \mathcal{U}$ such that $\cV(h) $ is smooth, we have that $S(h)$ is smooth and
\[ b_0(S(h)) = b_0(S(f)) + n_- + 2 \, n_+. \]
\end{Proposition}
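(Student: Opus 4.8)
The plan is to localise the computation of $b_0(S(h))$ near the double points of $\cV(f)$ and to read off each node's contribution from the cylinder structure supplied by Proposition \ref{prop:dbcov}. First I would record the reduction coming from the hypotheses: since $S(f)=\TG_f^{-1}(\rp{1})$ contains no double point, the two values $v_1,v_2\in\cp{1}$ attached to a node $p$ avoid $\rp{1}$, so $\sigma_f(p)\in\{+1,-1\}$ for every double point and $n_++n_-$ is exactly the number of nodes. Next, applying the foliation of $\overline{\D_0}$ by $0$-transverse discs constructed in Section \ref{sec:LL} together with Proposition \ref{prop:dbcov}, I would produce, for every smooth $[h]$ in a small enough neighbourhood $\mathcal{U}$ of $[f]$, a cylinder $C^p_h\subset\cV(h)$ for each node $p$ on which $\gamma_h$ restricts to an unramified double covering $\gamma_h:C^p_h\to C$ onto a cylinder $C\subset\cp{1}$ coherent with $p$. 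Writing $\cV(h)^\circ:=\cV(h)\setminus\bigcup_p C^p_h$, this splits $S(h)$ into the part $S(h)\cap\cV(h)^\circ$ and the local pieces $S(h)\cap C^p_h=\gamma_h^{-1}(\rp{1}\cap C)$.

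On $\cV(h)^\circ$, point $ii)$ of Proposition \ref{prop:dbcov} puts its connected components in bijection with those of $\tilde{\cV}(f)$ with small discs around the $p_i$ removed; because $v_1,v_2\notin\rp{1}$ the locus $S(f)$ avoids these discs, so for $\mathcal{U}$ small the set $S(h)\cap\cV(h)^\circ$ is a disjoint union of exactly $b_0(S(f))$ smooth circles, one isotopic to each circle of $S(f)$ and none meeting $\partial C^p_h$. It then remains to analyse $\gamma_h^{-1}(\rp{1}\cap C)$ on each cylinder. Since $C^p_h$ is a connected annulus that doubly covers the annulus $C$, the induced map $\pi_1(C^p_h)\to\pi_1(C)\cong\Z$ is multiplication by $2$. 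If $\sigma_f(p)=1$, then $v_1,v_2$ lie in the same component of $\cp{1}\setminus\rp{1}$, so $\rp{1}$ bounds inside $C$ the hemisphere containing neither $v_1$ nor $v_2$ and is therefore null-homotopic; its preimage is two disjoint circles. If $\sigma_f(p)=-1$, then $v_1,v_2$ lie in different components, so $\rp{1}$ generates $\pi_1(C)$ and, lifting through a connected degree-two cover, has a single circle as preimage. In both cases $\rp{1}\subset\itr(C)$, so these circles lie in the interior of $C^p_h$, are disjoint from $\cV(h)^\circ$, and are smooth because $\gamma_h$ is unramified on $C^p_h$.

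Summing the contributions, and using that no component of $S(f)$ runs through a node (so the smoothings cannot merge pre-existing components), I would conclude that each node with $\sigma_f(p)=1$ adds $2$ new circles and each node with $\sigma_f(p)=-1$ adds $1$, giving $b_0(S(h))=b_0(S(f))+2\,n_++n_-$. Smoothness of $S(h)$ follows from Proposition \ref{prop:S(f)}: on the cylinders $\gamma_h$ is unramified, and on $\cV(h)^\circ$ the map $\TG_h$ has no branching on $\rp{1}$ for $\mathcal{U}$ small, since $\TG_f$ has none. The main obstacle is the uniform passage to the neighbourhood $\mathcal{U}$: I must ensure simultaneously that the cylinders $C^p_h$ are defined for every smooth $[h]\in\mathcal{U}$ (via the $l$-transverse foliation and the continuity in $h$ of the radius $\varepsilon$ of Proposition \ref{prop:dbcov}), that $S(h)\cap\cV(h)^\circ$ tracks $S(f)$ with no spurious branching on $\rp{1}$, and that the homotopy class of $\rp{1}$ in $C$ is determined uniformly in $h$. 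The enumeration of preimage components of $\rp{1}$ under the degree-two cover, which rests squarely on the connectedness of $C^p_h$ asserted in Proposition \ref{prop:dbcov}, is the conceptual heart of the count.
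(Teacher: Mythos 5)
Your proof is correct, and its engine --- the cylinders of Proposition \ref{prop:dbcov} together with a count of $\gamma_h^{-1}(\rp{1})$ inside each cylinder --- is exactly the mechanism the paper relies on; the difference lies in how uniform control over the neighbourhood $\mathcal{U}$ is obtained. The paper first invokes Theorem \ref{thm:extLL}: continuity of the extended $\ly$ map at $[f]$, plus the hypotheses that $S(f)$ is smooth and misses the nodes, forces the support of $\ly([h])$ to stay off $\rp{1}$ for all $[h]$ near $[f]$, which gives smoothness of $S(h)$ and constancy of $b_0(S(h))$ on $\mathcal{U}\cap\D_0$ in one stroke; the count is then done along a single path of smooth curves degenerating to $[f]$, citing Proposition \ref{prop:dbcov} for the ``$1$ or $2$ circles per node'' step. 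You bypass Theorem \ref{thm:extLL} entirely and work from the decomposition of $S(h)$ into $S(h)\cap\cV(h)^\circ$ and the pieces $S(h)\cap C^p_h$, using the $l$-transverse foliation of Section \ref{sec:LL} to define the cylinders for every smooth $[h]\in\mathcal{U}$. This is more self-contained and, usefully, makes explicit what the paper leaves implicit: the null-homotopic--versus--generator dichotomy for $\rp{1}$ inside a coherent cylinder $C$, which is the actual reason a node contributes two circles when $\sigma_f(p)=1$ and one when $\sigma_f(p)=-1$. The price is that you must rule out spurious components and spurious branching in $\cV(h)^\circ$ by hand, where the paper gets this from continuity of $\ly$.

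On that last point, one clause of yours is imprecise: the absence of branching of $\gamma_h$ on $\rp{1}$ over $\cV(h)^\circ$ does \emph{not} follow merely from ``$\TG_f$ has none,'' because Lemma \ref{lem:limram} shows that extra ramification points (three per preimage of each node) do appear in $\cV(h)^\circ$ near the nodes. Their branching values, however, converge to the values $v_1,v_2$ of $\TG_f$ at the nodes, and these lie off $\rp{1}$ precisely because $S(f)$ contains no double point of $\cV(f)$; alternatively, points of $\cV(h)$ near a node but outside $C^p_h$ map into the two discs $\cp{1}\setminus C$, which can be chosen disjoint from $\rp{1}$. So your conclusion stands, but the justification needs that hypothesis, not only the smoothness of $S(f)$.
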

 
\begin{proof}
By Theorem \ref{thm:extLL}, the formula \eqref{eq:extLL} provides a continuous extension of $\ly$ to $f$. By \eqref{eq:critloc} and the assumption that $S(f)$ is smooth, it follows that the support of $\ly([f])$ is disjoint from $\rp{1}$. Note that each double point $p$ of $\cV(f)$ has to contribute either to $n_+$ or to $n_-$. By Theorem \ref{thm:extLL}, there exists a neighbourhood $\mathcal{U} \subset \LL$ such that for any $[h] \in \mathcal{U}$, the support of $\ly([h])$ is disjoint from $\rp{1}$. It implies that $S(h)$ is smooth for any $h$ such that $[h]\in \mathcal{U}$. In particular $b_0(S(h))$ is constant for any $[h]\in \mathcal{U}$ such that $\cV(h)$ is smooth. For such an $[h]$, consider a path of smooth curves converging to $[f]$. Then the connected components of $S(h)$ are of two types: the ones that converge to a connected components of $S(f)$ along this path and the ones that are contracted to a node of $\cV(f)$. To each connected component of $S(f)$ corresponds exactly one connected component of $S(h)$.  By Proposition \ref{prop:dbcov}, to each node $p \in \cV(f)$ corresponds exactly 1 or 2 connected components of $S(h)$ depending whether $p$ contributes to $n_-$ or $n_+$.
\end{proof}


We now come to the proof of Theorem \ref{thm:upbd}. The strategy is to apply Proposition \ref{prop:critnod} to a polynomial $f$ corresponding to the union of $d$ lines intersecting pairwise transversally and with prescribed signs $\sigma_f(p)$ at any double point $p\in \cV(f)$.

For $a, \, b \in \C^\ast$, define the line $\mathcal{L}_{a,b}:=\{(z,w)\in \ttor \, \big| \, az+bw=1\}$ and denote by $\gamma_{a,b}$ its logarithmic Gau\ss{} map. For two distinct lines $\mathcal{L}_{a,b}$ and $\mathcal{L}_{c,d}$ and $p:= \mathcal{L}_{a,b} \cap \mathcal{L}_{c,d}$, straightforward computations show that 
\[
\begin{array}{rl}
p =&(ad-bc)^{-1}\left( d-b,c-a\right), \\ \gamma_{a,b}(p) =&[ad-ab:bc-ab], \\ \gamma_{c,d}(p) =&[cd-bc:cd-ad].
\end{array}
\]
It follows that $\sigma(p)=0$ if and only if $ab$ or $cd$ is on the line $(ad,bc) \subset \C$. Also, $\sigma(p)=1$ if and only if $ab$ and $cd$ are on different sides of the line $(ad,bc)$. Consider now the four sets of lines
\[
\begin{array}{l}
L_1:=\{ \mathcal{L}_{e^{i\theta},e^{-i\theta}} \, \big|   \,  0<  \theta < \varepsilon \}, \; L_2:=\{ \mathcal{L}_{i \, e^{i\theta},i \, e^{-i\theta}} \, \big|   \,  0<  \theta < \varepsilon \}, \\ 
L_3:=\{ \mathcal{L}_{\rho e^{i\theta}, \rho e^{-i\theta}} \, \big|   \, M<\rho<M+1, \,  0<  \theta < \varepsilon \}, \\
L_4:=\{ \mathcal{L}_{\rho e^{i(\pi/2+\theta)}, \rho e^{i(\pi/2-\theta)}} \, \big|   \, M^2< \rho <M^2+1, \, 0<  \theta < \varepsilon \}.
\end{array}
\]
We claim that for a generic pair of lines  $(\mathcal{L},\mathcal{L}') \in L_i \times L_j$ and $p:= \mathcal{L} \cap \mathcal{L}'$, the sign $\sigma(p)$ is non zero and depends only on $(i,j)$.  Moreover, we claim that this sign is the $(i,j)$ entry of the following matrix
\[ \left(
  \begin{array}{rrrr}
    -1 & -1 &  1  & 1 \\ 
    -1 & -1 & -1 & 1 \\ 
     1  & -1 &  1 &  1 \\ 
     1  &  1  &  1  & 1 \\ 
  \end{array} \right)
\]
In order to prove this claim, it is quite helpful to notice that $\sigma(p)$ is invariant by toric translation $(z,w)\mapsto(az,aw)$. Using such translations, we can always carry $\mathcal{L}$ to $\mathcal{L}_{1,1}$ for instance. The details are left to the reader.

\begin{proof}[Proof of Theorem \ref{thm:upbd}]
For any integer $n$ satisfying $0 \leq n \leq d(d-1)/2$, we choose below a collection of $d$ lines among $L_1$, ..., $L_4$ defined by an equation $f$ such that $\sigma_f(p)=-1$ for exactly $n$ of the $d(d-1)/2$ doubles points $p$ of $\cV(f)$. Proposition \ref{prop:critnod} then provides a degree $d$ polynomial $h$ such that $\cV(h)$ and $S(h)$ are smooth and $$b_0(S(h))= d + n + 2(d(d-1)/2-n)= d + d(d-1) -n.$$
The result follows as $n$ runs from $0$ to $d(d-1)/2$.

Assume first that $n\notin\{0, d(d-1)/2\}$. Then $n=m(m-1)/2+r$ for a unique choice of integers $0\leq r<m< d$. Let $L$ be a set of $d$ generic lines such that  $\vert L \cap L_1 \vert=m-r, \; \vert L \cap L_2\vert=r , 
\vert L \cap L_3\vert=1,$ and $ \vert L \cap L_4\vert=d-m-1.$
Let now $f$ be an equation for the union of lines of $L$. The double points $p$ of $\cV(f)$ for which $\sigma_f(p)=-1$ are exactly those given as the intersection of two lines in $( L_1 \cup L_2)\cap L$ or as the intersection of a line in $L_2 \cap L$ with a line in $L_3 \cap L$. It follows that there are exactly
\[  \binom{n}{2} +r\cdot1=n\]
many such points.

Similarly, for the case $n=0$ (respectively $n=d(d-1)/2$), it suffices to consider $L\subset L_1$ (respectively $L\subset L_4$).
\end{proof}

In the last section, we show that there exist lattice polygons $\Delta$ for which the upper bound given in the inequality \eqref{eq:trivbd} is not sharp. We now discuss the sharpness of the lower bound. The proposition below implies that the lower bound is not sharp whenever $g_\Delta$ is odd. Recall by \cite{Kho} that the generic curve $\cV(f)$ of $\LL$ has genus $g_\Delta$.

\begin{Proposition}
Let $[f] \in \D_0$ be any class of Laurent polynomial such that $S(f)$ is smooth and connected. Then,  $g_\Delta$ has to be even.
\end{Proposition}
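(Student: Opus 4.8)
The plan is to relate the parity of $g_\Delta$ to a mod-$2$ count coming from the logarithmic Gauss map restricted to $S(f)$. First I would recall that, by Mikhalkin's description \eqref{eq:s}, we have $S(f) = \TG_f^{-1}(\rp{1})$, so $S(f)$ is the real part of the real structure that $\TG_f$ pulls back from the standard complex conjugation on $\cp{1}$ with fixed locus $\rp{1}$. When $S(f)$ is smooth, it is a disjoint union of embedded circles; the hypothesis is that there is exactly one such circle. The key observation is that $\rp{1}$ separates $\cp{1}$ into two discs $D_+$ and $D_-$ (the upper and lower half-spheres), and $\TG_f^{-1}(D_+)$ and $\TG_f^{-1}(D_-)$ partition $\tilde{\cV}(f) \setminus S(f)$ into two open surfaces-with-boundary, swapped by complex conjugation, whose common boundary is exactly $S(f)$.

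Next I would compute the Euler characteristic of $\tilde{\cV}(f)$ by cutting it along $S(f)$. Write $\Sigma_{\pm} = \overline{\TG_f^{-1}(D_\pm)}$, so that $\tilde{\cV}(f) = \Sigma_+ \cup_{S(f)} \Sigma_-$ with $\Sigma_+ \cap \Sigma_- = S(f)$. Inclusion-exclusion for Euler characteristic gives
\[
\chi\big(\tilde{\cV}(f)\big) = \chi(\Sigma_+) + \chi(\Sigma_-) - \chi(S(f)).
\]
Since $S(f)$ is a disjoint union of circles, $\chi(S(f)) = 0$, and since complex conjugation is a homeomorphism $\Sigma_+ \to \Sigma_-$, we have $\chi(\Sigma_+) = \chi(\Sigma_-)$. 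Hence $\chi(\tilde{\cV}(f)) = 2\,\chi(\Sigma_+)$ is even. As $\tilde{\cV}(f)$ is a smooth curve of genus $g_\Delta$ (for a generic, hence for our smooth, member of $\LL$), we have $\chi(\tilde{\cV}(f)) = 2 - 2g_\Delta$, which is automatically even — so this first computation alone does not constrain the parity of $g_\Delta$. The connectedness hypothesis $b_0(S(f)) = 1$ must therefore enter more sharply, through the orientability and genus of the halves $\Sigma_\pm$ rather than through $\chi$ alone.

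The refined argument I would pursue is as follows. Because $S(f)$ is a single circle, it either separates $\tilde{\cV}(f)$ or it does not. If $S(f)$ does not separate, then $\Sigma_+$ and $\Sigma_-$ would be connected by a path crossing $S(f)$ an odd number of times, but $\TG_f$ sends $\Sigma_+$ into $D_+$ and $\Sigma_-$ into $D_-$ and the conjugation symmetry forces the complement of a single separating-or-nonseparating curve to behave consistently with the two-sheeted half-space decomposition; I would argue that $S(f)$ must in fact separate $\tilde{\cV}(f)$ into precisely the two pieces $\Sigma_+$ and $\Sigma_-$, each connected, exchanged by conjugation. Then each $\Sigma_\pm$ is a compact orientable surface with a single boundary circle, so $\Sigma_+$ is a genus-$h$ surface with one hole for some $h \geq 0$, giving $\chi(\Sigma_+) = 1 - 2h$. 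Substituting into $\chi(\tilde{\cV}(f)) = 2\chi(\Sigma_+) = 2 - 4h$ yields $2 - 2g_\Delta = 2 - 4h$, hence $g_\Delta = 2h$ is even.

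The main obstacle, and the step requiring the most care, is justifying that a single smooth circle $S(f)$ genuinely separates $\tilde{\cV}(f)$ into two connected halves exchanged by conjugation. A nonseparating circle on an orientable surface does exist topologically, so I must rule this out using the specific structure: the conjugation $\mathrm{conj}$ on $\tilde{\cV}(f)$ induced by pulling back the standard real structure via $\TG_f$ has fixed locus exactly $S(f)$, and the two open pieces $\TG_f^{-1}(D_+)$ and $\TG_f^{-1}(D_-)$ are genuinely disjoint and swapped by $\mathrm{conj}$. I would make this precise by noting that $\tilde{\cV}(f) \setminus S(f) = \TG_f^{-1}(D_+) \sqcup \TG_f^{-1}(D_-)$ is a disjoint union of two \emph{nonempty} open sets, which immediately shows $S(f)$ separates; connectedness of each piece, when $b_0(S(f))=1$, then follows because each component of the complement is a surface whose boundary is a union of components of $S(f)$, and a single boundary circle cannot bound two distinct components on the same side. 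I expect the delicate point to be handling the behavior of $\TG_f$ over $\rp{1}$ (ensuring no subtlety from the covering degree or from $\TG_f$ being non-injective along $S(f)$), which is controlled by the smoothness hypothesis on $S(f)$ guaranteeing, via Proposition \ref{prop:S(f)}, that $\TG_f$ has no branch points on $\rp{1}$ and that $S(f)$ is embedded.
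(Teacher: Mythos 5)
Your cutting argument is the right skeleton, and its first half is sound and agrees with the paper: since $\tilde{\cV}(f)\setminus S(f)=\TG_f^{-1}(D_+)\sqcup\TG_f^{-1}(D_-)$ with both preimages open and nonempty, and since smoothness of $S(f)$ means $\TG_f$ has no ramification over $\rp{1}$ (Proposition \ref{prop:S(f)}), a collar of the single circle $S(f)$ has one side in each preimage, so each preimage is connected and $S(f)$ separates $\tilde{\cV}(f)$ into two pieces $\Sigma_\pm$, each with one boundary circle. The paper's proof starts exactly this way.

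The genuine gap is the symmetry you use to conclude $\chi(\Sigma_+)=\chi(\Sigma_-)$. There is no ``real structure that $\TG_f$ pulls back from complex conjugation on $\cp{1}$'': pulling back an involution along a non-injective holomorphic map is not a defined operation, and for a general $[f]\in\D_0$ (whose coefficients need not be real) the curve $\cV(f)$ admits no anti-holomorphic involution at all, let alone one with fixed locus $S(f)$ that commutes with $\TG_f$. The identity $S(f)=\TG_f^{-1}(\rp{1})$ makes $S(f)$ a preimage, not a real locus; it is a fixed-point set only when $f$ is projectively real and $\TG_f$ equivariant. Without that involution nothing identifies $\Sigma_+$ with $\Sigma_-$: writing $n=\deg\TG_f$ and applying Riemann--Hurwitz to each hemisphere, $\chi(\Sigma_\pm)=n-r_\pm$ where $r_\pm$ is the ramification of $\TG_f$ lying over $D_\pm$, and there is no a priori reason for the ramification divisor to distribute between the two hemispheres so that $h_+\equiv h_-\pmod 2$, which is what evenness of $g_\Delta=h_++h_-$ requires. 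This equal-genus step is precisely where the paper does its real work: it shows that $\chi$ of each half is determined by data shared by both halves, namely the curve $\A\big(S(f)\big)\subset\R^2$. By Lemma 1 of \cite{Mikh}, the local degree of $\A$ restricted to a half jumps by one when crossing $\A\big(S(f)\big)$ from the convex to the concave side; hence the multiplicity of every $2$-cell of $\R^2\setminus\A\big(S(f)\big)$, and therefore $\chi(\mathcal{C})$ computed as the multiplicity-weighted sum of Euler characteristics of these cells, is read off from $\A\big(S(f)\big)$ alone and so is the same for both halves. To complete your proof you would need to import this argument (or a genuine substitute for it); the conjugation shortcut cannot be repaired.
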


\begin{proof}
Since $S(f)$ is connected by assumption, the description $S(f) = \TG_f^{-1} (\rp{1})$ implies that $S(f)$ is a single loop separating $\cV(f)$ into two connected components. Each 
such connected component $\mathcal{C}$ is a genus $g' \leq g$ subsurface of $\cV(f)$ with one boundary component. It follows that $g'$ is determined by $\chi(\mathcal{C})$. We show that $\chi (\mathcal{C})$ itself is determined by the map $\A$ restricted to $S(f)$ only. This fact implies that both components $\mathcal{C}$ have the same genus $g'$, implying the result. 

Since there is no ramification point on $S(f)$ by assumption, $\TG_f$ is monotone on $S(f)$, so that 
the curvature of $\A\big(S(f)\big)$ does not change as long as $\A$ is an immersion. By \cite[Lemma 1]{Mikh}, the local degree of 
$\A_{\vert_\mathcal{C}}$ in $\R^2 \setminus \A \big( S(f) \big)$ increases by one when crossing $\A \big(S(f)\big)$ from the convex to the 
concave side. Hence, the number of preimages of any connected component of $\R^2 \setminus \A \big( S(f) \big)$ in the amoeba plane is determined by $\A \big(S(f)\big)$. We can then compute $\chi(\mathcal{C})$ by adding the Euler characteristics of the 2-cells of $\R^2 \setminus \A \big( S(f) \big)$, counted with multiplicity. It follows that $\chi ( \mathcal{C})$ only depends on $\A\big(S(f)\big)$.
\end{proof}

The first interesting case that is not covered by the above proposition is $g_\Delta=2$. If we want to construct a curve 
$\cV(f)$ of genus $2$ whose critical locus $S(f)$ is connected, then $S(f)$ separates $\cV(f)$ into two homeomorphic halves. Let 
$\mathcal{H}$ be one of them. The map $\A$ restricted to $\mathcal{H}$ has to deform to the immersion of a bouquet of two circles, which is the skeleton of $\mathcal{H}$. On the one hand, an immersed bouquet of two circles has to bound at least 3 compact connected components. On the other hand, Proposition 2.5 in \cite{FPT} implies that the number of compact connected components of $\A \big( \cV(f) \big)$ is bounded from above by $g_\Delta$. 

In the more general context of harmonic amoebas (see \cite{L}), we can construct a Riemann surface $\mathcal{V}$ in $\mathcal{M}_{2,3}$ together with an harmonic amoeba map with connected critical locus $S$, see figure \ref{pic:haram}.

\begin{figure}[h]
\input{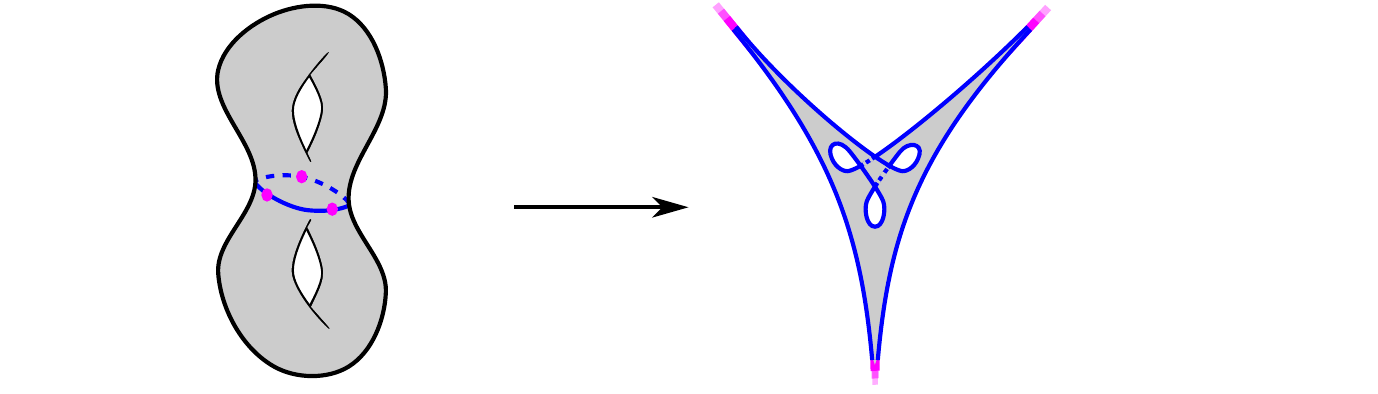_t}
\caption{A harmonic amoeba of a Riemann surface $\mathcal{V}  \in \mathcal{M}_{2,3}$ with connected critical locus $S$.}
\label{pic:haram}
\end{figure}

The point that we want to make here is that the injectivity of the order map defined in \cite{FPT} might provide non trivial obstructions to low values of $b_0\big(S(f)\big)$, in the sense Conjecture \ref{conj}.

\section{Examples}\label{sec:ex}

In the following, we illustrate our results with some computations. The simplest interesting case is for $\vert \Delta \cap \Z^2\vert=4$. Note that the action of the torus $\ttor$ on itself induces homogeneities on $\LL$. More precisely, the toric translations of the coordinates $(z,w) \mapsto (\alpha z, \beta w)$ induce isomorphisms on curves $\cV(f)$ and do not affect $\TG_f$, see \cite[Chapter 9, Section B]{GKZ}. 

\subsection{Bi-degree $(1,1)$ curves in $\cp{1} \times \cp{1}$}

We can choose $\Delta$ to be the convex hull of the set $A=\left\lbrace 1,z,w,zw \right\rbrace$. After reducing homogeneities, generic polynomials have the form $ f(z,w)=1+z+w+\alpha zw$. We can parametrize $\cV(f)$ by looking at the pencil of lines passing through a chosen point of $\cV(f)$. For example, considering the pencil of lines $\left[u:v\right] \mapsto \left\lbrace (-1 + uz, vz) \, \vert \, z \in \C \right\rbrace $ through $(-1,0)$ and expressing the solutions of $f(-1 + uz, vz)=0$ as a function of $t:=u/v$ provides the parametrization 
\[ t \mapsto \left( -\dfrac{1+t}{\alpha t}, -\dfrac{1+t(1-\alpha)}{\alpha} \right) \]
of $\cV(f)$.
Its logarithmic Gau\ss{} map is given by $$ \gamma_f(t)= \Big[ \; t (1+t)(1-\alpha) : 
1+t(1-\alpha) \; \Big].$$
By differentiating the ratio of the two coordinates of $\gamma_f$, we deduce that the ramification points of $\gamma_f$ are given by the parameters $t$ satisfying
\[ t^2 (1-\alpha) +2t +1=0\]
namely $t_\pm= (1 \pm \sqrt{\alpha})/(-1+\alpha)$. After computing $\gamma_f(t_\pm)= -2t_\pm-1$, it follows that 
$$ \alpha \in \cS_\Delta \Leftrightarrow 2t_++1 \in \R \text{ or } 2t_-+1 \in \R \Leftrightarrow t_+ \in \R \text{ or } t_- \in \R \Leftrightarrow  \alpha \in \R_{\geq 0}.$$
Amoebas of curves corresponding to $\alpha>0$ possess a pinching point, see \cite{Mikh}. In that case, $S(f)$ consists of two circles intersecting on the sphere $\cV(f)$ such that one of them is contracted to the pinching point by $\A$.

\begin{figure}[h]
\input{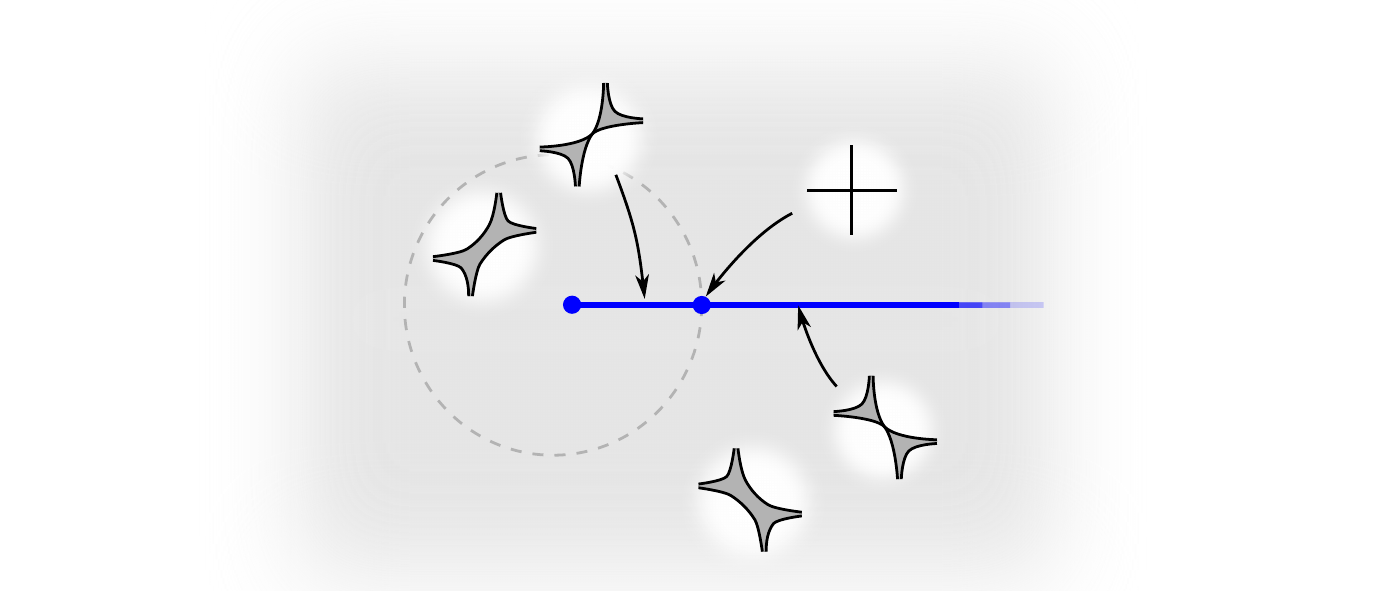_t}
\caption{$\cS_\Delta$ inside the $\alpha$-space, and corresponding amoebas.}
\label{pic:cfsp11}
\end{figure}

\subsection{The case $ \Delta =\conv (\left\lbrace 1,z,w,z^2 \right\rbrace )$}
Moding out the action of $\ttor$, $f$ is generically of the form $ f(z,w)=1+z+w+\alpha z^2$. Similar computations show that the complement of $\cS_\Delta$ is connected.

\subsection{The case $ \Delta =\conv (\left\lbrace 1,z,w,zw,z^2 \right\rbrace) $}
After reduction, we can consider polynomials of the form $f(z,w)=1+z+w+\alpha zw+\beta z^2$. Following Remark \ref{rem:ll}, we compute the resultant $R:= R_\Delta \left( f, \gamma_{f,[u:v]}, g \right)$ using Singular \cite{Sing}, where $g:= \det \big( grad \, f, grad \, \gamma_{f,[u:v]} \big).$ We obtain that 
\[
\begin{array}{rl}
 R(f)= &\alpha^3 \beta v^2 (u+v)^2 (-\alpha+\alpha^2+\beta) \big[ (4\beta-1)(\beta+\alpha^2-\alpha)u^4 \\
 & \\
 & -2(4\beta-1)(-3\beta+\alpha+\alpha^2)u^3v + (-\alpha^2+\alpha-13\beta+48\beta^2)v^2u^2 \\
 & \\
  & +4\beta(\alpha-3+8\beta+2\alpha^2)uv^3-4\beta(\alpha-1)^2v^4 \big]
\end{array}
\]
Note first that the factor $(-\alpha+\alpha^2+\beta) $ of $R$ corresponds to the set of polynomials $f$ for which $\cV(f)$ is singular.
The factor $v^2 (u+v)^2$ of $R$ does not corresponds to branching points of the logarithmic Gau\ss{} map. Otherwise, it would implies that $\D_0 \subset \cS_\Delta$ in contradiction with Theorem \ref{thm:sa}. By Proposition \ref{deglogGauss} and the Riemann-Hurwitz formula \eqref{eq:RHformula}, the branching divisor $B_{\gamma_f}$ has degree 4 for generic $f$. It follows that 
\[ \ly(f) = R(f)\cdot \big( \alpha^3 \beta v^2 (u+v)^2 (-\alpha+\alpha^2+\beta) \big)^{-1}. \]
Here, few remarks are worth to be made concerning the possibility to extend $\ly(f)$. First, the coefficients of $\ly(f)$ never vanish simultaneously, except for $\alpha=\beta=0$. It implies that $\ly(f)$ is well defined as long as $f$ is not the equation of a line. Secondly, when $\cV(f)$ is singular, or equivalently when $\beta=\alpha-\alpha^2$, we have that $f(z,w)=(\alpha z +1)\big(1+(1-\alpha)z+w\big)$ implying that $\cV(f)$ is the union of a generic line (for $\alpha\neq1$) with the binomial cylinder $\{z=-1/\alpha\}$. On the one hand, computation shows that
$$\ly(f)= 4\alpha v\big((2\alpha-1) u+(\alpha-1) v)^3.$$
On the other hand, if we denote by $p \in \tilde{\cV}(f)$ (respectively $q$) the preimage of the singular point of $\cV(f)$ sitting on the line (respectively on the binomial cylinder), then $$\ly(f)= \TG_f(q) + 3\cdot \TG_f(p)$$
as a divisor. This corresponds to the extension of the $\ly$ map  of Theorem \ref{thm:extLL}.

We now show that the complement of  $\cS_\Delta$ is disconnected by exhibiting two polynomials $f$ whose corresponding critical locii $S(f)$ have differently many connected components. To that aim, we restrict ourselves to the subspace $\beta=-\alpha$ with corresponding polynomial $f_\alpha$ and compute 
\[ \ly(f_\alpha)=(-4\alpha^2+7\alpha+2)u^4+(34\alpha+8\alpha^2+8)u^3v \hspace{0.5cm} \]
\[ \hspace{2.5cm} +(47\alpha+14)v^2u^2+(-8\alpha^2+12+28\alpha)uv^3+ (4\alpha^2+4-8\alpha)v^4. \]
As the dependence of $\ly(f_\alpha)$ in $\alpha$ is quadratic, the solutions of $\ly(f_\alpha)=0$ can be given as two functions $\alpha_\pm([u:v])$. Considering $[u:v] \in \rp{1}$, the functions $\alpha_\pm([u:v])$ provide a parametrization of $\cS_\Delta$ restricted to the space $\beta=-\alpha$ which turns out to be disconnected, see Figure \ref{fig:cfsp2}. 

\begin{figure}[h]
\input{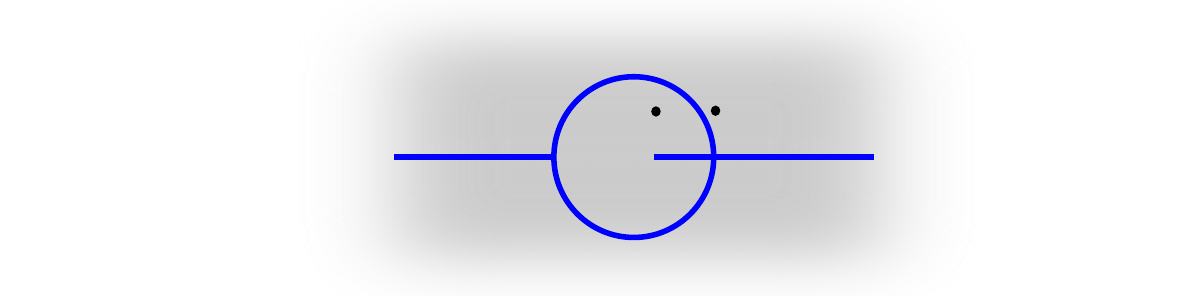_t}
\caption{$\cS_\Delta$ inside the $\alpha$-space.}
\label{fig:cfsp2}
\end{figure}

Consider for instance the polynomials $f_i$ and $f_{1+i}$ sitting in different connected components. Using the parametrization 
\[t\mapsto \Big( \frac{-1-t+\alpha}{\alpha t - \alpha}, \frac{-(1+t)t+\alpha}{\alpha t - \alpha} \Big)\]
of $\cV(f_\alpha)$ and the corresponding logarithmic Gau\ss{} map
\[\gamma_{f_\alpha}(t)=\big[ (\alpha-2t-1+t^2)(1+t-\alpha):(\alpha-2)(\alpha-t-t^2)\big], \]
 we can show that the critical locii $S(f_i)$ and $S(f_{1+i})$ have respectively 1 and 2 connected components, see Figure \ref{fig:examples}.
 
 \begin{figure}[h]
\input{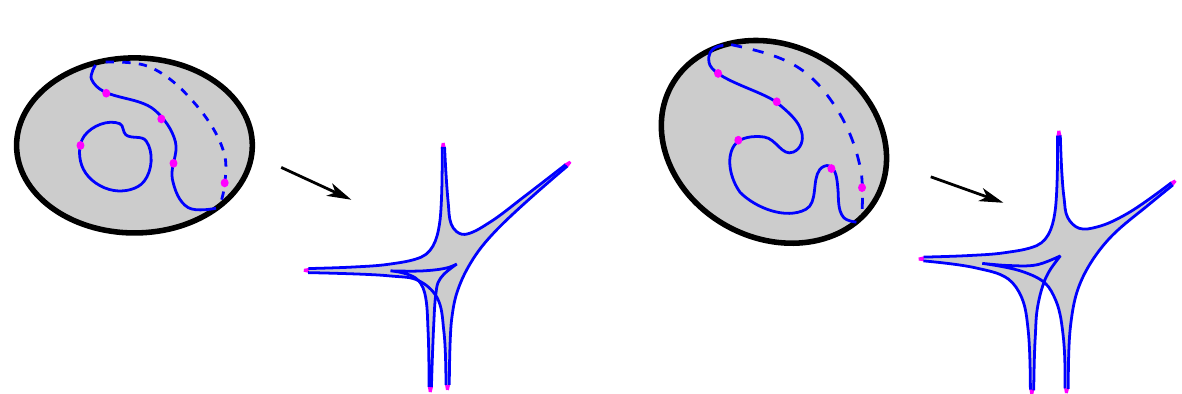_t}
\caption{The amoeba map on $\cV(f_{1+i})$ (left) and $\cV(f_i)$ (right): the singular locii $S(f_{1+i})$ and $S(f_i)$ are drawn in blue, as well as their image by $\A$. The intersection points of $\cV(f_{1+i})$ and $\cV(f_i)$ with $\Xd^\infty$ are drawn in purple.}
\label{fig:examples}
\end{figure}

\bibliographystyle{alpha}
\bibliography{Draft}

\begin{thebibliography}{{Lan}15b}

\bibitem[ACG11]{ACG2}
E.~{Arbarello}, M.~{Cornalba}, and P.A. {Griffiths}.
\newblock {\em {Geometry of algebraic curves. Volume II. With a contribution by
  Joseph Daniel Harris.}}
\newblock Berlin: Springer, 2011.

\bibitem[DGPS15]{Sing}
W.~{Decker}, G-M. {Greuel}, G.~{Pfister}, and H.~{Sch\"onemann}.
\newblock {\sc Singular} {4-0-2} --- {A} computer algebra system for polynomial
  computations.
\newblock http://www.singular.uni-kl.de, 2015.

\bibitem[EKL06]{EKL}
M.~{Einsiedler}, M.~{Kapranov}, and D.~{Lind}.
\newblock {Non-archimedean amoebas and tropical varieties.}
\newblock {\em {J. Reine Angew. Math.}}, 601:139--157, 2006.

\bibitem[ELSV01]{ELSV}
T.~{Ekedahl}, S.~{Lando}, M.~{Shapiro}, and A.~{Vainshtein}.
\newblock {Hurwitz numbers and intersections on moduli spaces of curves.}
\newblock {\em {Invent. Math.}}, 146(2):297--327, 2001.

\bibitem[FPT00]{FPT}
M.~Forsberg, M.~Passare, and A.~Tsikh.
\newblock {Laurent determinants and arrangements of hyperplane amoebas.}
\newblock {\em Adv. Math.}, 151(1):45--70, 2000.

\bibitem[GKZ08]{GKZ}
I.M. Gelfand, M.M. Kapranov, and A.V. Zelevinsky.
\newblock {\em {Discriminants, resultants, and multidimensional determinants.
  Reprint of the 1994 edition.}}
\newblock {Modern Birkh\"auser Classics. Boston, MA: Birkh\"auser. x, 523~p.},
  2008.

\bibitem[GR09]{GR}
R.~C. {Gunning} and H.~{Rossi}.
\newblock {\em {Analytic functions of several complex variables. Reprint of the
  1965 original.}}
\newblock Providence, RI: AMS Chelsea Publishing, reprint of the 1965 original
  edition, 2009.

\bibitem[{Ite}95]{It}
I.~{Itenberg}.
\newblock {Counter-examples to Ragsdale conjecture and $T$-curves.}
\newblock In {\em {Real algebraic geometry and topology. A conference on real
  algebraic geometry and topology, December 17-21, 1993, Michigan State
  University, East Lansing, MI, USA}}, pages 55--72. Providence, RI: American
  Mathematical Society, 1995.

\bibitem[{Kho}78]{Kho}
A.G. {Khovanskii}.
\newblock {Newton polyhedra and toroidal varieties.}
\newblock {\em {Funct. Anal. Appl.}}, 11:289--296, 1978.

\bibitem[KO06]{KO}
R.~{Kenyon} and A.~{Okounkov}.
\newblock {Planar dimers and Harnack curves.}
\newblock {\em {Duke Math. J.}}, 131(3):499--524, 2006.

\bibitem[Kri]{Kri}
I.~Krichever.
\newblock Amoebas, ronkin function and monge-ampère measures of algebraic
  curves with marked points.
\newblock ArXiv:1310.8472v1.

\bibitem[{Lan}15a]{L2}
L.~{Lang}.
\newblock {A generalization of simple Harnack curves}.
\newblock {\em ArXiv: 1504.07256}, April 2015.

\bibitem[{Lan}15b]{L}
L.~{Lang}.
\newblock {Harmonic Tropical Curves}.
\newblock {\em ArXiv:1501.07121}, January 2015.

\bibitem[Mik00]{Mikh}
G.~Mikhalkin.
\newblock Real algebraic curves, the moment map and amoebas.
\newblock {\em Ann. of Math. (2)}, 151(1):309--326, 2000.

\bibitem[MR16]{MikRen}
G.~{Mikhalkin} and A.~{Renaudineau}.
\newblock {Tropical limit of log-inflection points for planar curves}.
\newblock {\em ArXiv: 1612.04083}, December 2016.

\bibitem[{Rul}03]{Ru}
H.~{Rullg{\aa}rd}.
\newblock {\em Topics in geometry, analysis and inverse problems}.
\newblock PhD thesis, Stockholms universitet, 2003.

\bibitem[Sd13]{dWS}
F.~{Schroeter} and T.~{de Wolff}.
\newblock {The Boundary of Amoebas}.
\newblock {\em ArXiv e-prints}, October 2013.

\bibitem[{Tan}80]{Tan}
A.~{Tannenbaum}.
\newblock {Families of algebraic curves with nodes.}
\newblock {\em {Compos. Math.}}, 41:107--126, 1980.

\bibitem[{Vas}15]{Vas}
V.A. {Vassiliev}.
\newblock {A few problems on monodromy and discriminants.}
\newblock {\em {Arnold Math. J.}}, 1(2):201--209, 2015.

\end{thebibliography}

\vspace{1cm}
\noindent
\textit{Email address}: lang.lionel@math.uu.se

\end{document}